\documentclass[a4paper,leqno]{amsart}


\usepackage[english]{babel} 									
\usepackage[T1]{fontenc}							
\usepackage[utf8]{inputenx}
				

\usepackage{mathtools}								
\usepackage{empheq}									
\usepackage{enumitem}								

	
\usepackage{amssymb}								
\usepackage[sc]{mathpazo}							
\usepackage{mathrsfs}								


\usepackage{verbatim}                               
\usepackage{iflang}									
\usepackage{xifthen}								

\usepackage[final]{pdfpages}						
\usepackage{todonotes}								
\usepackage{aliascnt}								
\usepackage{needspace}								
\usepackage{refcount}


\usepackage{subcaption}			
\usepackage{placeins} 			
\usepackage{grffile}			
\usepackage[all]{xy}			

\usepackage{transparent}
\usepackage{color}				

\usepackage{graphicx}			

\usepackage[babel]{csquotes}							

\usepackage{nameref}									
\usepackage[colorlinks	=	true,						
          	 	linkcolor	=	red,
            	urlcolor	=	red,
            	citecolor	=	red]%
            	{hyperref}
\usepackage{bookmark}									
\usepackage{cleveref}									
\setcounter{tocdepth}{1}

\theoremstyle{definition}
\newtheorem{thm}{Theorem}[section]

\newtheorem{defi}[thm]{Definition}
\newtheorem{lemm}[thm]{Lemma}

\newtheorem{rem}[thm]{Remark}

\newtheorem{prop}[thm]{Proposition}

\theoremstyle{remark}

\DeclareMathOperator{\Mod}{mod}

\DeclareMathOperator{\diam}{diam}

\DeclareMathOperator{\loc}{loc}

\hyphenation{pa-ra-me-tri-za-tion}

\allowdisplaybreaks

\newcommand{\norm}[1]{ \left\Vert #1 \right\Vert }	
\newcommand{\abs}[1]{ \left| #1 \right| }           
\newcommand{\apmd}[2][]{							
	\ifthenelse{\equal{#1}{}}%
					{ \operatorname{N}_{#2}	}%
					{ \operatorname{N}_{#1,#2} 	}}

\begin{document}
\title[Quasiconformal Jordan domains]{Quasiconformal Jordan domains}

\author{Toni Ikonen}

\address{University of Jyvaskyla \\ Department of Mathematics and Statistics \\
P.O. Box 35 (MaD) \\
FI-40014 University of Jyvaskyla}
\email{toni.m.h.ikonen@jyu.fi}

\subjclass[2020]{Primary 30L10, Secondary 30C65, 28A75, 51F99, 52A38.}
\keywords{quasiconformal, metric surface, Carathéodory, Beurling--Ahlfors}


\begin{abstract}
We extend the classical Carathéodory extension theorem to quasiconformal Jordan domains $( Y, d_{Y} )$. We say that a metric space $( Y, d_{Y} )$ is a \emph{quasiconformal Jordan domain} if the completion $\overline{Y}$ of $( Y, d_{Y} )$ has finite Hausdorff $2$-measure, the \emph{boundary} $\partial Y = \overline{Y} \setminus Y$ is homeomorphic to $\mathbb{S}^{1}$, and there exists a homeomorphism $\phi \colon \mathbb{D} \rightarrow ( Y, d_{Y} )$ that is quasiconformal in the geometric sense.

We show that $\phi$ has a continuous, monotone, and surjective extension $\Phi \colon \overline{ \mathbb{D} } \rightarrow \overline{ Y }$. This result is best possible in this generality. In addition, we find a necessary and sufficient condition for $\Phi$ to be a quasiconformal homeomorphism. We provide sufficient conditions for the restriction of $\Phi$ to $\mathbb{S}^{1}$ being a quasisymmetry and to $\partial Y$ being bi-Lipschitz equivalent to a quasicircle in the plane.
\end{abstract}

\maketitle\thispagestyle{empty}

\section{Introduction}\label{sec:intro}
Let $(X, d_{X} )$ be a metric space with locally finite Hausdorff $2$-measure. If $X$ is also homeomorphic to a $2$-manifold, we say that $( X, d_{X} )$ is a \emph{metric surface}. A homeomorphism $\phi \colon ( X, d_{X} ) \rightarrow ( Y, d_{Y} )$ between metric surfaces is \emph{quasiconformal} if there exists $K \geq 1$ such that for all path families $\Gamma$,
\begin{equation}
    \label{eq:QC}
    K^{-1}\Mod \Gamma
    \leq
    \Mod \phi \Gamma
    \leq
    K \Mod \Gamma,
\end{equation}
where $\Mod \Gamma$ is the \emph{conformal modulus} of $\Gamma$, see \Cref{sec:sobolev}.

We say that a metric surface $( Y, d_{Y} )$ is a \emph{metric Jordan domain} if the metric completion $\overline{Y}$ is homeomorphic to the closed unit disk $\overline{ \mathbb{D} }$, the \emph{boundary} $\partial Y = \overline{ Y } \setminus Y$ is homeomorphic to the unit circle $\mathbb{S}^{1}$, and the Hausdorff $2$-measure of $\overline{Y}$ is finite.

A metric Jordan domain is a \emph{quasiconformal Jordan domain} if there exists a quasiconformal homeomorphism $\phi \colon \mathbb{D} \rightarrow ( Y, d_{Y} )$. A metric Jordan domain is a quasiconformal one if and only if $( Y, d_{Y} )$ is \emph{reciprocal} as introduced in \cite[Theorem 1.4]{Raj:17}; see \Cref{defi:reciprocal}. This uses the facts that $\mathcal{H}^{2}_{ \overline{Y} }( \overline{Y} ) < \infty$ and that $\partial Y$ is a non-trivial continuum.

In general, it is not true that the completion $\overline{ Y }$ of a quasiconformal Jordan domain is a quasiconformal image of the closed unit disk $\overline{ \mathbb{D} }$. We illustrate this with an example after \Cref{thm:carat:metric}. Contrast this with the classical case when $Y$ is a Jordan domain in the plane $\mathbb{R}^{2}$. Then any $1$-quasiconformal homeomorphism $\phi \colon \mathbb{D} \rightarrow Y$, i.e., any Riemann map from the unit disk onto $Y$ extends to a homeomorphism $\Phi \colon \overline{\mathbb{D}} \rightarrow \overline{Y}$ by a result known as the Carathéodory extension theorem \cite[Chapter I, Theorem 3.1]{Gar:Mar:05}. In fact, the extension still satisfies \eqref{eq:QC} with $K = 1$. Additionally, if $\phi \colon \mathbb{D} \rightarrow Y$ is $K$-quasiconformal for some $K \geq 1$, it still has a homeomorphic extension to the boundary.

\subsection{Carathéodory's theorem}\label{sec:cara}
We prove the following generalization of the classical Carathéodory extension theorem of quasiconformal maps.
\begin{thm}\label{thm:carat:metric}
Let $\phi \colon \mathbb{D} \rightarrow Y$ be a quasiconformal map onto a quasiconformal Jordan domain. Then there exists an extension $\Phi \colon \overline{ \mathbb{D} } \rightarrow \overline{Y}$ of $\phi$ that is surjective, monotone and $\Phi( \mathbb{S}^{1} ) = \partial Y$.
\end{thm}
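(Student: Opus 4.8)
The strategy is to use conformal modulus estimates to control how $\phi$ behaves near the boundary, mimicking the classical proof via crosscuts but replacing Euclidean length–area with the modulus inequality \eqref{eq:QC}. The key object is a \emph{prime-end-type} argument: for a point $\xi \in \mathbb{S}^1$, consider the family of crosscuts of $\mathbb{D}$ separating $\xi$ from a fixed interior point, parametrized by small circular arcs $C_r = \mathbb{D} \cap \partial B(\xi, r)$. The images $\phi(C_r)$ are paths in $Y$, and I want to show their diameters (measured in $\overline{Y}$) tend to $0$ as $r \to 0$, along a sequence of radii at least. This would let me define $\Phi(\xi)$ as the unique point in $\bigcap_r \overline{\phi(\mathbb{D}\cap B(\xi,r))}$.

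The concrete steps I would carry out are as follows. \emph{Step 1:} Recall the Loewner-type / length–area estimate available from reciprocality — since $(Y,d_Y)$ is reciprocal and has $\mathcal H^2(\overline Y)<\infty$, the modulus of the path family connecting two boundary arcs of an annulus is bounded below in terms of its "width," and conversely small modulus forces small diameter of separating curves (this is where I would invoke the reciprocality conditions and the finiteness of $\mathcal H^2_{\overline Y}(\overline Y)$ precisely). \emph{Step 2:} Fix $\xi\in\mathbb S^1$. For $0<r<s<1$ the annular region $A_{r,s}=\mathbb D\cap (B(\xi,s)\setminus \overline{B(\xi,r)})$ has modulus (of the family joining its two "circular" boundary arcs) comparable to $\log(s/r)$; by \eqref{eq:QC} the image family in $Y$ has modulus comparable to $\log(s/r)$ as well. \emph{Step 3:} A Fuglede-type argument then gives that for \emph{almost every} $r$ (with respect to $\tfrac{dr}{r}$ on a dyadic range) the image curve $\phi(C_r)$ is rectifiable with length controlled, hence has small $d_Y$-diameter; choosing a good sequence $r_n\to 0$, the nested closures $\overline{\phi(\mathbb D\cap B(\xi,r_n))}$ in $\overline Y$ shrink to a point (using completeness of $\overline Y$ and that the "ends" are separated by the short curves $\phi(C_{r_n})$). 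Define $\Phi(\xi)$ to be that point. \emph{Step 4:} Show $\Phi$ is well-defined (independent of the sequence), continuous on $\overline{\mathbb D}$ (uniform shrinking as $\xi$ varies, again from the modulus bound which is uniform in $\xi$), and that $\Phi(\xi)\in\partial Y$: indeed $\Phi(\xi)$ cannot lie in the open surface $Y=\phi(\mathbb D)$, since a preimage point would have a neighborhood mapped into $\mathbb D$, contradicting $\xi\in\mathbb S^1$. \emph{Step 5:} Surjectivity onto $\overline Y$ and $\Phi(\mathbb S^1)=\partial Y$: since $\overline{\mathbb D}$ is compact, $\Phi(\overline{\mathbb D})$ is compact, contains the dense set $Y$, hence equals $\overline Y$; and $\Phi(\mathbb S^1)$ is a compact set containing $\overline Y\setminus Y=\partial Y$ and contained in $\partial Y$ (by Step 4), so they coincide. \emph{Step 6:} Monotonicity, i.e., $\Phi^{-1}(y)$ is connected for each $y$: the fibers over boundary points are closed arcs or points in $\mathbb S^1$ because two distinct "prime ends" with disjoint crosscut neighborhoods must map to distinct points, by the lower modulus bound (Step 1) applied to the family separating them; the fibers over interior points are singletons since $\phi$ is a homeomorphism.

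The main obstacle I anticipate is \emph{Step 3 together with the uniformity in Step 4}: unlike the planar case there is no a priori Euclidean geometry on $Y$, so to conclude that short modulus forces small diameter I must lean on the full strength of reciprocality — in particular the inequality bounding the modulus of curves through a small ball from below, which is exactly the hypothesis that fails for general metric surfaces. Making the diameter bound \emph{uniform in $\xi\in\mathbb S^1$} (so that $\Phi$ is genuinely continuous, not merely defined pointwise) requires a compactness/covering argument on $\mathbb S^1$ combined with the absolute continuity of the modulus as a set function, and this is the technical heart of the proof. A secondary subtlety is verifying monotonicity: one must rule out that the fiber $\Phi^{-1}(y)$ over a boundary point is disconnected, which again reduces to a separation estimate — if $\Phi$ identified two arcs of $\mathbb S^1$ that are not adjacent, the crosscut separating them would have to degenerate, contradicting the lower modulus bound.
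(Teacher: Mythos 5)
Your plan follows the same skeleton as the paper --- circular crosscuts $C_r$ near $\xi\in\mathbb S^1$, an $L^2$ upper-gradient / Fubini argument in polar coordinates to select good radii, nested compact connected sets $U_r=\overline{\phi(V_r)}$ whose intersection defines $\Phi(\xi)$, and then continuity, surjectivity and monotonicity --- but the crucial \emph{singleton} step is treated quite differently, and there your sketch has a genuine gap in its justification.

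In \Cref{lemm:intersection:point} the paper never tries to make the crosscuts $\phi(C_r)$ short; it only needs \emph{rectifiability} along a full-measure set of radii, which (via \Cref{lemm:ACpath}) lets $F_r:=\overline{\phi(E_r\cap\mathbb D)}$ close up to an arc with endpoints on $\partial Y$. That topological separation, combined with the $K$-quasiconformality of $\phi$ and the vanishing of modulus at interior points of $\mathbb D$, yields $\Mod\Gamma(C,C';Y\cup C)=0$ for a suitable subarc $C$ of $\widetilde C=\bigcap_r U_r$, contradicting the Loewner-type lower bound of \Cref{lemm:positivemodulus}. That lower bound is the real metric input, and it rests on the coarea inequality together with $\mathcal H^2_{\overline Y}(\overline Y)<\infty$. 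Your Step~1 gestures at such a lower bound, but Step~3 bypasses it and instead aims for \emph{short} crosscuts via a $\tfrac{dr}{r}$ Cauchy--Schwarz estimate, asserting the nested closures must then shrink.

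That assertion is where the reasoning breaks: ``completeness of $\overline Y$ plus short separating curves'' does \emph{not} by itself force the nested $U_{r_n}$ to degenerate. What one really needs is the Jordan-curve structure of $\partial Y$: writing $J_n=U_{r_n}\cap\partial Y$ for the nested boundary arcs, $J_n$ shares endpoints $a_n,b_n$ with $F_{r_n}$, so $\diam F_{r_n}\to 0$ gives $d(a_n,b_n)\to 0$; since $\partial Y\cong\mathbb S^1$, the subcontinuum $\widetilde C=\bigcap_n J_n$ then has coinciding endpoints, and as $J_1\neq\partial Y$ it must be a point. With that insert your route closes, but as written the appeal to completeness is a placeholder for the missing Jordan-curve argument, and one also needs the \Cref{lemm:ACpath}-type step to know $F_{r_n}$ closes up to an arc with endpoints on $\partial Y$ in the first place. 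Two further remarks: for continuity you invoke a \emph{uniform} shrinking rate over $\xi\in\mathbb S^1$, which is more than is needed and harder to establish --- pointwise shrinking plus nestedness already gives continuity, as in \Cref{lemm:continuous}. And the paper's monotonicity proof (\Cref{lemm:surjectivemonotone}) is a short topological Jordan-loop argument rather than the modulus-separation estimate your Step~6 sketches; your version is plausible but vaguer, whereas the paper's works cleanly once continuity and surjectivity are in hand.
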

Here we say that a map is \emph{monotone} if it is continuous and the preimage of every point is a \emph{continuum}, i.e., a compact and connected set.

The map $\Phi$ might fail to be a homeomorphism. As an example, consider the length space $X$ homeomorphic to $\mathbb{R}^{2}$ obtained by collapsing the Euclidean square $\left[ 0, 1 \right]^{2}$ in $\mathbb{R}^{2}$ to a point. Let $\pi \colon \mathbb{R}^{2} \rightarrow X$ denote the associated $1$-Lipschitz quotient map. We define $Y = \pi( \left( 1, 2 \right) \times \left(0, 1\right) )$. Then $\partial Y = \pi( \partial \left[ 1, 2 \right] \times \left[0, 1 \right] )$. The restriction of $\pi$ to $\left( 1, 2 \right) \times \left(0, 1\right)$ is a 1-quasiconformal map, but its extension collapses the arc segment $\left\{ 1 \right\} \times \left[0, 1 \right]$ to the singleton $\pi( \left[0, 1 \right]^{2} )$. By considering a Riemann map $f \colon \mathbb{D} \rightarrow (0,1)^{2}$, the claim follows by setting $\phi = \pi \circ f$.

Next, we investigate when the extension in \Cref{thm:carat:metric} is a quasiconformal homeomorphism. To this end, for every $y \in \overline{Y}$ and $\diam \overline{Y} \geq R > r > 0$, we let $\Gamma( \overline{B}_{ \overline{Y} }( y, r ), \overline{Y} \setminus B_{ \overline{Y} }( y, R ); \overline{Y} )$ denote the family of paths joining $\overline{B}_{ \overline{Y} }( y, r )$ to $\overline{Y} \setminus B_{ \overline{Y} }( y, R )$.
\begin{prop}\label{prop:QS}
The extension $\Phi$ in \Cref{thm:carat:metric} is quasiconformal if and only if for every $y \in \partial Y$ and $R > 0$ for which $\overline{Y} \setminus B_{ \overline{Y} }( y, R ) \neq \emptyset$,
\begin{equation}
    \label{eq:pointshavezeromod}
    \lim_{ r \rightarrow 0^{+} }
    \Mod
    \Gamma( 
        \overline{B}_{ \overline{Y} }( y, r ),
        \overline{Y} \setminus B_{ \overline{Y} }( y, R );
        \overline{Y}
    )
    =
    0.
\end{equation}
Moreover, if \eqref{eq:pointshavezeromod} holds at each $y \in \partial Y$ and $\phi$ is $K$-quasiconformal, then $\Phi$ is $K$-quasiconformal.
\end{prop}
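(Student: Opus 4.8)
The plan is to make everything hinge on one dichotomy: does $\Phi$ collapse a non-degenerate subarc of $\mathbb{S}^{1}$ to a point? By \Cref{thm:carat:metric}, $\Phi$ is a monotone surjection whose restriction to $\mathbb{D}$ is the homeomorphism $\phi$, so every non-trivial fibre $\Phi^{-1}(y)$ lies over a point $y \in \partial Y$ and is a closed subarc of $\mathbb{S}^{1}$ — not all of $\mathbb{S}^{1}$, since $\Phi(\mathbb{S}^{1}) = \partial Y$ is a non-degenerate continuum — and $\Phi$ is a homeomorphism precisely when all these fibres are singletons. Throughout write $\Gamma_{y}(r,R)$ for $\Gamma( \overline{B}_{\overline{Y}}(y,r), \overline{Y} \setminus B_{\overline{Y}}(y,R); \overline{Y} )$.

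\emph{Necessity.} Assume $\Phi$ is quasiconformal, i.e.\ a homeomorphism satisfying \eqref{eq:QC} with some $K \geq 1$. Fix $y \in \partial Y$ and $R > 0$ with $\overline{Y} \setminus B_{\overline{Y}}(y,R) \neq \emptyset$, and set $x_{0} = \Phi^{-1}(y)$. Since $\Phi$ is a homeomorphism of compact spaces, the compact sets $\Phi^{-1}( \overline{B}_{\overline{Y}}(y,r) )$ decrease to $\{x_{0}\}$ as $r \to 0^{+}$; hence for each $\rho > 0$ there is $r_{\rho} > 0$ with $\Phi^{-1}( \overline{B}_{\overline{Y}}(y,r) ) \subseteq B_{\overline{\mathbb{D}}}(x_{0},\rho)$ whenever $0 < r < r_{\rho}$, while $\delta := \dist\bigl( x_{0}, \Phi^{-1}( \overline{Y} \setminus B_{\overline{Y}}(y,R) ) \bigr) > 0$. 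Pulling a curve of $\Gamma_{y}(r,R)$ back by $\Phi$ then shows, for $0 < \rho < \delta$ and $0 < r < r_{\rho}$, that $\Gamma_{y}(r,R) \subseteq \Phi\, \Gamma\bigl( \overline{B}_{\overline{\mathbb{D}}}(x_{0},\rho), \overline{\mathbb{D}} \setminus B_{\overline{\mathbb{D}}}(x_{0},\delta); \overline{\mathbb{D}} \bigr)$, whence
\begin{align*}
    \Mod \Gamma_{y}(r,R)
    &\leq
    K \cdot \Mod \Gamma\bigl( \overline{B}_{\overline{\mathbb{D}}}(x_{0},\rho),\, \overline{\mathbb{D}} \setminus B_{\overline{\mathbb{D}}}(x_{0},\delta);\, \overline{\mathbb{D}} \bigr) \\
    &\leq
    \frac{ 2\pi K }{ \log( \delta/\rho ) }
\end{align*}
by the standard estimate for the modulus of a round ring. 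Letting $r \to 0^{+}$ and then $\rho \to 0^{+}$ yields \eqref{eq:pointshavezeromod}, and the same chain of inequalities will also give the quantitative statement.

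\emph{Sufficiency.} Assume \eqref{eq:pointshavezeromod} at every $y \in \partial Y$. First I would show that $\Phi$ is a homeomorphism. If it is not, then some fibre $\alpha := \Phi^{-1}(y)$ is a non-degenerate closed subarc of $\mathbb{S}^{1}$. Since $\partial Y$ is non-degenerate and $\Phi$ is continuous, I can choose $R > 0$ and a non-degenerate closed subarc $\beta \subseteq \mathbb{S}^{1} \setminus \alpha$ with $\Phi(\beta) \subseteq \overline{Y} \setminus B_{\overline{Y}}(y,R)$. Let $\Gamma_{0}$ be a family of rectifiable crosscuts of $\mathbb{D}$ with one endpoint in $\alpha$, the other in $\beta$, and interior in $\mathbb{D}$, chosen so that $\Mod \Gamma_{0} =: m_{0} > 0$; such a family exists because $\alpha$ and $\beta$ are disjoint non-degenerate subarcs of $\mathbb{S}^{1}$. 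For each $C \in \Gamma_{0}$ the curve $\Phi \circ C$ joins $y \in \overline{B}_{\overline{Y}}(y,r)$ to $\Phi(\beta) \subseteq \overline{Y} \setminus B_{\overline{Y}}(y,R)$, so $\{ \Phi \circ C : C \in \Gamma_{0} \} \subseteq \Gamma_{y}(r,R)$ for \emph{every} $r > 0$. Since $\phi^{-1}$ is also $K$-quasiconformal and maps the portion of each $\Phi \circ C$ lying in $Y$ onto the corresponding portion of $C$, a comparison of admissible functions (adjoining the two endpoints changes no line integral, and $Y \subseteq \overline{Y}$) gives $\Mod \{ \Phi \circ C : C \in \Gamma_{0} \} \geq m_{0}/K > 0$. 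Hence $\Mod \Gamma_{y}(r,R) \geq m_{0}/K$ for all $r > 0$, contradicting \eqref{eq:pointshavezeromod}. So $\Phi$ is injective; being a continuous bijection between compact metric spaces, it is a homeomorphism.

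It then remains to upgrade this to $K$-quasiconformality, and this is where I expect the real work to lie. By the symmetry $\Phi \leftrightarrow \Phi^{-1}$ (both extending $K$-quasiconformal maps) it suffices to show $\Mod \Phi \Gamma \leq K \Mod \Gamma$ for every curve family $\Gamma$ in $\overline{\mathbb{D}}$. The curves of $\Gamma$ contained in $\mathbb{S}^{1}$ have vanishing modulus, being supported on the $\mathcal{H}^{2}$-null set $\mathbb{S}^{1}$, and so do their $\Phi$-images; for the remaining curves one transports an admissible function through $\phi$ on the open disk — where $\Phi = \phi$ and the $K$-quasiconformal modulus inequalities are available, in analytic form — handling the pieces of curves on, and the values of densities on, $\mathbb{S}^{1}$ and $\partial Y$. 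The obstacle is precisely this boundary bookkeeping: one must establish that $\partial Y$ (equivalently $\mathbb{S}^{1} \subseteq \overline{\mathbb{D}}$) is negligible for the conformal modulus — notably that $\mathcal{H}^{2}(\partial Y) = 0$, so that no curve family on $\partial Y$ carries positive modulus and boundary portions of curves cannot obstruct admissibility — after which the geometric modulus bounds for $\phi$ transfer without loss, with the same constant $K$, to $\Phi$; this also settles the last sentence of the proposition. Step 1, by contrast, is soft once one observes that a collapsed arc forces a definite amount of modulus into every ball around its image.
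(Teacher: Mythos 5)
Your necessity argument (pull back an annulus in $\overline{Y}$ through the homeomorphism $\Phi$ and estimate by a round ring in $\overline{\mathbb{D}}$) is correct and is essentially the "corresponding Euclidean result" the paper invokes. Your injectivity argument in the sufficiency direction is also in the same spirit as the paper's \Cref{lemm:injective}: a collapsed arc of $\mathbb{S}^{1}$ forces a uniform positive lower bound on $\Mod \Gamma(\overline{B}(y,r), \overline{Y}\setminus B(y,R); \overline{Y})$, contradicting \eqref{eq:pointshavezeromod}. (The paper phrases it via \Cref{lemm:positivemodulus} applied in $\overline{\mathbb{D}}$, which is the same device as your crosscut family $\Gamma_{0}$.)

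The genuine gap is exactly where you flag it: upgrading the homeomorphism $\Phi$ to a $K$-quasiconformal map. You propose to "transport an admissible function through $\phi$ on the open disk" and to render the boundary negligible by observing $\mathcal{H}^{2}(\partial Y) = 0$. This is not established, and more importantly it would not be enough even if true. The issue is not measure-theoretic negligibility of $\partial Y$ but the Newtonian--Sobolev regularity of $\Phi^{-1}$ \emph{across} $\partial Y$: to turn a $\Gamma$-admissible $\rho$ on $\overline{\mathbb{D}}$ into a $\Phi\Gamma$-admissible $\rho'$ on $\overline{Y}$ you need the upper gradient inequality for $\Phi^{-1}$ along almost every path that \emph{crosses or touches} $\partial Y$, and the modulus of such paths is positive. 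Nothing you can do to $\rho'$ on the $\mathcal{H}^{2}$-null set $\partial Y$ alters whether the boundary pieces of curves obstruct admissibility, because a curve $\gamma$ may spend zero $\mathcal{H}^{1}$-length on $\mathbb{S}^{1}$ while $\Phi\gamma$ spends positive $\mathcal{H}^{1}$-length on $\partial Y$ — there is a priori no regularity of $\Phi|_{\mathbb{S}^{1}}$ at this stage. The paper does not attempt this transfer directly. It instead (i) reduces the outer dilatation bound for $\Psi = \Phi^{-1}$ to a \emph{pointwise local modulus condition} via Williams' characterization (\Cref{prop:pointwisemodulus}), and (ii) verifies that condition at boundary points $x \in \mathbb{S}^{1}$ via Rajala's canonical conformal parametrization of quadrilaterals (\Cref{lemm:Kai:Riemann}), which allows it to \emph{shave off} the portion of the half-annulus touching $\partial Y$ (taking $Q^{0} = f^{-1}([0,1]\times[M/4, 3M/4])$) and apply the \emph{interior} quasiconformality of $\phi$ to a family of curves entirely inside $Y$. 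The inner dilatation is then handled by a Euclidean reflection argument across $\mathbb{S}^{1}$ inside \Cref{prop:pointwisemodulus}, and the exact constant $K$ is recovered in \Cref{lemm:extensionisQC:samedilatation} by showing $\rho_{\Phi^{-1}} = 0$ almost everywhere on $\partial Y$ (again, not by asserting $\mathcal{H}^{2}(\partial Y) = 0$). This "shave the quadrilateral" mechanism is the missing idea in your proposal.
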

A well-known fact is that if there exists $C_{U} > 0$ such that for all $y \in \partial Y$ and $0 < r < \diam \partial Y$,
\begin{equation}
    \label{eq:area:growth:intro}
    \mathcal{H}^{2}_{ \overline{Y} }( \overline{B}_{ \overline{Y} }( y, r ) )
    \leq
    C_{U} r^{2},
\end{equation}
then \eqref{eq:pointshavezeromod} holds; see \Cref{lemm:points}. The condition \eqref{eq:pointshavezeromod} has a close link to the reciprocality condition introduced in \cite{Raj:17}; see \Cref{defi:reciprocal}. The aforementioned example of the collapsed disk $\left[0, 1\right]^{2}$ fails \eqref{eq:pointshavezeromod} at exactly one point.

It can happen that the extension $\Phi$ in \Cref{thm:carat:metric} is a homeomorphism, but not quasiconformal; see \cite[Example 6.1]{Iko:Rom:20:acc}. There we have a metric space $X$ for which there exists a $1$-Lipschitz homeomorphism $\pi \colon \mathbb{R}^{2} \rightarrow X$ which is $1$-quasiconformal outside a Cantor set $K \subset \left[0,1\right] \times \left\{0\right\}$, but $\pi|_{ (0,1)^{2} }$ does not extend to a $1$-quasiconformal homeomorphism on $\left[0, 1\right]^{2}$. The claim follows by setting $Y = \pi( (0,1)^2 )$ and setting $\phi = \pi \circ f$ for any Riemann map $f \colon \mathbb{D} \rightarrow (0,1)^{2}$.

\subsection{Quasicircles}
Consider a quasiconformal Jordan domain $Y$ whose boundary points satisfy the area growth inequality \eqref{eq:area:growth:intro}. We know from \Cref{prop:QS} that the extension $\Phi \colon \overline{ \mathbb{D} } \rightarrow \overline{ Y }$ of any quasiconformal homeomorphism $\phi \colon \mathbb{D} \rightarrow Y$ is a quasiconformal homeomorphism. In particular, the \emph{boundary map} $g_{\phi} = \Phi|_{ \mathbb{S}^{1} } \colon \mathbb{S}^{1} \rightarrow \partial Y$ is a homeomorphism.

We are especially interested when we can deduce that $\partial Y$ is a \emph{quasicircle}, i.e., a quasisymmetric image of $\mathbb{S}^{1}$. We refer the reader to \Cref{sec:preliminaries} for definitions.
\begin{thm}[Beurling--Ahlfors extension]\label{thm:QS}
Suppose that $Y$ is a quasiconformal Jordan domain whose boundary points satisfy the area growth \eqref{eq:area:growth:intro}.

If $\phi \colon \mathbb{D} \rightarrow Y$ is a quasiconformal homeomorphism, then the boundary map $g_{ \phi }$ is a quasisymmetry if and only if $\partial Y$ has bounded turning. If $\partial Y$ has bounded turning, then any quasisymmetry $g \colon \mathbb{S}^{1} \rightarrow \partial Y$ is the boundary map of some quasiconformal map $\phi \colon \mathbb{D} \rightarrow Y$.
\end{thm}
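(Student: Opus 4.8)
The plan is to handle the two halves of the equivalence and then the existence statement, using \Cref{prop:QS} as the starting point. Since the boundary points of $Y$ satisfy \eqref{eq:area:growth:intro}, \Cref{lemm:points} yields \eqref{eq:pointshavezeromod} at every $y \in \partial Y$; hence by \Cref{prop:QS} the extension $\Phi \colon \overline{\mathbb D} \to \overline Y$ of any quasiconformal $\phi \colon \mathbb D \to Y$ is a $K$-quasiconformal \emph{homeomorphism}, and $g_\phi = \Phi|_{\mathbb S^1} \colon \mathbb S^1 \to \partial Y$ is a homeomorphism onto $\partial Y$.

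The implication ``$g_\phi$ a quasisymmetry $\Rightarrow$ $\partial Y$ of bounded turning'' is the soft direction. A quasisymmetry quasi-preserves the ratio $\diam A / d(a,b)$ for $a,b \in A$, and $\mathbb S^1$ has bounded turning, so the image $\partial Y = g_\phi(\mathbb S^1)$ inherits it: given $p = g_\phi(a)$ and $q = g_\phi(b)$, the $g_\phi$-image of the shorter of the two subarcs of $\mathbb S^1$ between $a$ and $b$ is one of the two subarcs of $\partial Y$ between $p$ and $q$ and has diameter $\lesssim d(p,q)$.

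The substance is the converse. I would prove the stronger statement that, when $\partial Y$ has bounded turning, $\Phi$ itself is an $\eta$-quasisymmetry with $\eta$ depending only on $K$ and the data of $Y$, and then restrict to $\mathbb S^1$. The source $\overline{\mathbb D}$ is Ahlfors $2$-regular and $2$-Loewner, so the standard machinery upgrading a quasiconformal map to a quasisymmetry applies once $\overline Y$ is known to be linearly locally connected: the Loewner property of the disk gives the lower modulus bound for any curve family linking a small set to a far set, this modulus changes by at most the factor $K$, and the upper area bound \eqref{eq:area:growth:intro} together with linear local connectivity of $\overline Y$ forces the image configuration to be metrically small, which is exactly the $\eta$-estimate. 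So the real task is: $\partial Y$ of bounded turning $\Rightarrow$ $\overline Y$ linearly locally connected. For the LLC$_1$ part one replaces points near $\partial Y$ by nearby boundary points and joins the latter along the shorter subarc of $\partial Y$, whose diameter is controlled by bounded turning, while interior points are handled using \eqref{eq:area:growth:intro} and the disk structure of $Y$ transported by $\phi$. For the LLC$_2$ part one uses that $\overline Y$ is a topological closed disk, so a small metric ball can only cut off a component that carries a subarc of $\partial Y$ of correspondingly small diameter (bounded turning again); two points at definite distance from the center therefore lie in the common ``large'' component and can be joined there. This geometric analysis of $\overline Y$ near $\partial Y$ — and the verification that bounded turning is precisely what compensates for the possible failure of $\overline Y$ to be Loewner — is where the work concentrates and is the main obstacle; cuspidal Jordan domains in the plane show that some condition on $\partial Y$ beyond \eqref{eq:area:growth:intro} is unavoidable. (Equivalently, one could verify the three-point quasisymmetry condition for $g_\phi$ directly: pull a quadrilateral configuration back to $\overline{\mathbb D}$, where its conformal modulus is an explicit function of the cross-ratio, bound the modulus of the corresponding curve family in $\overline Y$ from above by \eqref{eq:area:growth:intro} and from below using bounded turning, and solve the resulting inequalities.)

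For the last assertion, suppose $\partial Y$ has bounded turning and let $g \colon \mathbb S^1 \to \partial Y$ be a quasisymmetry. Fix any quasiconformal homeomorphism $\psi \colon \mathbb D \to Y$; by the converse just established, its boundary map $g_\psi$ is a quasisymmetry, hence so is $g_\psi^{-1}$, and therefore $h := g_\psi^{-1} \circ g$ is a quasisymmetric self-homeomorphism of $\mathbb S^1$. By the classical Beurling--Ahlfors theorem, $h$ extends to a quasiconformal (in the analytic, equivalently metric, sense) self-homeomorphism $H$ of $\overline{\mathbb D}$. Then $\phi := \psi \circ H \colon \mathbb D \to Y$ is quasiconformal, being a composition of quasiconformal maps, and the extension from \Cref{thm:carat:metric} restricts on $\mathbb S^1$ to $g_\psi \circ h = g$. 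This completes the plan.
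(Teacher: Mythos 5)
Your outline — the soft direction, the reduction of the existence statement to the converse via Beurling--Ahlfors, and the identification of the converse as the substance — matches the paper exactly, and the soft direction and the existence argument are correct. The gap is in the converse.

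You propose to prove the stronger statement that $\Phi \colon \overline{\mathbb D} \to \overline Y$ is globally a quasisymmetry by first establishing that $\overline Y$ is LLC and then invoking the standard machinery (Loewner source, regular target, quasiconformal $\Rightarrow$ quasisymmetric). But that machinery needs the target to be Ahlfors $2$-regular, i.e.\ to satisfy \emph{two-sided} mass bounds at \emph{all} points. The hypothesis \eqref{eq:area:growth:intro} is only an upper bound and only at points of $\partial Y$; the matching lower bound at boundary points can indeed be extracted from bounded turning (this is \Cref{lemm:Ahlfors:lowerbound}), but neither bound is available at interior points, and you introduce no replacement. You yourself flag this spot as ``where the work concentrates and is the main obstacle,'' but you do not resolve it, and without interior control the step ``the upper area bound \eqref{eq:area:growth:intro} together with LLC forces the image configuration to be metrically small'' fails: the curve families in that argument are not anchored at $\partial Y$, so \eqref{eq:area:growth:intro} does not apply to them. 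The paper sidesteps this entirely: it verifies the three-point condition for $g_\phi$ only, with every modulus estimate in $\overline Y$ centered at a boundary point $g(z)$. Concretely, bounded turning is used to trap the two image arcs $A',B' \subset \partial Y$ inside nested annuli around $g(z)$; the modulus of $\Gamma(A',B';\overline Y)$ is then bounded \emph{above} via \Cref{lemm:points} (which uses the boundary mass upper bound), while the bound \emph{below} comes from the $2$-Loewner property of $\overline{\mathbb D}$ and $K$-quasiconformality of $\Phi$; combining gives the bound on $M$. Your parenthetical ``alternative'' is in fact the right track and close to the paper, but it is mislabelled as equivalent to your main plan, and the phrase ``from below using bounded turning'' is reversed: bounded turning is used to place $A',B'$ so that the upper modulus bound applies, and the lower bound on modulus comes from the Loewner property of $\overline{\mathbb D}$, not from bounded turning.
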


\Cref{thm:QS} has a parallel in the classical literature. Ahlfors and Beurling proved in \cite{Ah:Beu:56} that every quasisymmetry $g \colon \mathbb{S}^{1} \rightarrow \mathbb{S}^{1}$ is the boundary homeomorphism of some quasiconformal map $\phi \colon \mathbb{D} \rightarrow \mathbb{D}$. In fact, we apply their result in proving that quasisymmetries $g \colon \mathbb{S}^{1} \rightarrow \partial Y$ extend like claimed. It is also known that the boundary homeomorphisms of quasiconformal maps $\phi \colon \mathbb{D} \rightarrow \mathbb{D}$ are quasisymmetries. So we also recover this result with the assumptions of \Cref{thm:QS}.

We now know from \Cref{thm:QS} that $\partial Y$ is a quasicircle in some situations. We are interested whether or not $\partial Y$ can be bi-Lipschitz embedded into the plane. We say that a quasicircle $Z$ is \emph{planar}, or a \emph{planar quasicircle}, if there exists a bi-Lipschitz embedding $h \colon Z \rightarrow \mathbb{R}^{2}$.

One of the main results obtained in \cite{Her:Mey:12} states that a quasicircle is planar if and only if its \emph{Assouad dimension} is strictly less than two, see \Cref{def:doubling}. There are quasicircles for every Assouad dimension between $1$ and $\infty$ since $Z = ( \mathbb{S}^{1}, \norm{ \cdot }_{2}^{\alpha} )$ for $0 < \alpha \leq 1$ has Assouad dimension $\alpha^{-1}$.
\begin{prop}\label{lemm:assouad}
Let $Y$ be a quasiconformal Jordan domain. If $\partial Y$ is a quasicircle and the boundary points satisfy the area growth \eqref{eq:area:growth:intro}, then the Assouad dimension of $\partial Y$ is at most two.
\end{prop}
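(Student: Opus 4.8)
\emph{The plan} is to match the upper area bound \eqref{eq:area:growth:intro} with a lower one near $\partial Y$ and then run a packing argument. First, by \Cref{lemm:points} the hypothesis \eqref{eq:area:growth:intro} yields \eqref{eq:pointshavezeromod} at every $y\in\partial Y$, so by \Cref{thm:carat:metric} and \Cref{prop:QS} the extension $\Phi\colon\overline{\mathbb D}\to\overline Y$ of any quasiconformal $\phi\colon\mathbb D\to Y$ is a quasiconformal homeomorphism; in particular $Y$ is reciprocal (\Cref{defi:reciprocal}) and $\overline Y$ is compact with $\mathcal H^2_{\overline Y}(\overline Y)<\infty$, while, since $\partial Y$ is a quasicircle, $\partial Y$ is doubling and has bounded turning, say with constant $\lambda$. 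The key reduction is the \emph{mass lower bound}: there are $c_0,\delta_0>0$, depending only on $\lambda$ and the reciprocality constant, such that $\mathcal H^2_{\overline Y}\bigl(B_{\overline Y}(y,t)\bigr)\geq c_0 t^2$ whenever $y\in\partial Y$ and $0<t\leq\delta_0\diam\partial Y$.

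Granting this, the Assouad bound is quick. Fix $z\in\partial Y$ and $0<r<R\leq\diam\partial Y$, and let $\{z_1,\dots,z_N\}$ be a maximal $r$-separated subset of $\overline B_{\partial Y}(z,R)$; it is an $r$-net of that ball, so it controls the covering number at scale $r$. If $R/r$ is bounded, then $N$ is bounded by the doubling property of $\partial Y$. Otherwise $r$ is small, the balls $B_{\overline Y}(z_i,r/2)$ are pairwise disjoint and contained in $\overline B_{\overline Y}(z,2R)$, and the mass lower bound gives $\mathcal H^2_{\overline Y}(B_{\overline Y}(z_i,r/2))\geq c_0(r/2)^2$; comparing with $\mathcal H^2_{\overline Y}(\overline B_{\overline Y}(z,2R))\lesssim R^2$, which follows from \eqref{eq:area:growth:intro} together with the compactness of $\overline Y$, yields $N\lesssim(R/r)^2$. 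Hence $\overline B_{\partial Y}(z,R)$ is covered by $\lesssim(R/r)^2$ balls of radius $r$, so $\partial Y$ has Assouad dimension at most $2$.

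It remains to prove the mass lower bound, and here bounded turning enters essentially. Fix $y\in\partial Y$, set $s=t/2$, and using that $\partial Y$ is a locally connected continuum let $\alpha$ be the component of $y$ in $\partial Y\cap\overline B_{\overline Y}(y,s)$; this is a subarc through $y$ with $\diam\alpha\geq s$ whose endpoints $a,b$ satisfy $d_{\overline Y}(y,a)=d_{\overline Y}(y,b)=s$. Bounded turning forces $d_{\overline Y}(a,b)\geq s/\lambda$: otherwise the subarc of $\partial Y$ complementary to $\alpha$ would have diameter at most $\lambda\, d_{\overline Y}(a,b)$, which together with $\alpha\subseteq\overline B_{\overline Y}(y,s)$ would bound $\diam\partial Y$ by a multiple of $s$, contrary to $t\leq\delta_0\diam\partial Y$. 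Choosing cut points on $\alpha$ near $a$ and $b$ at scale $\asymp s/\lambda$, one obtains disjoint subarcs $\tau^+,\tau^-$ of $\partial Y$ inside $\overline B_{\overline Y}(y,s)$ with $\diam\tau^\pm$ and $\dist(\tau^+,\tau^-)$ all comparable to $s$ up to constants in $\lambda$. Now regard $\overline Y$ (equivalently, its double $\widehat Y$ across $\partial Y$, a metric sphere of finite Hausdorff $2$-measure) as a topological quadrilateral with sides $\tau^+$, one complementary boundary arc $\sigma_1$, $\tau^-$, the other complementary boundary arc $\sigma_2$, in cyclic order, where the cut points are chosen so that $\sigma_1$ and $\sigma_2$ are also separated by $\asymp s$. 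By reciprocality (the lower estimate being the reciprocal lower bound of Rajala--Romney, valid in every metric surface) the product $\Mod\Gamma(\tau^+,\tau^-;\overline Y)\cdot\Mod\Gamma(\sigma_1,\sigma_2;\overline Y)$ lies in $[\kappa^{-1},\kappa]$. On the other hand, every curve joining $\tau^+$ to $\tau^-$ either stays in $B_{\overline Y}(y,Cs)$ and then has length $\gtrsim s$, or leaves it and still spends length $\gtrsim s$ inside it; so a suitable multiple of $s^{-1}\mathbf 1_{B_{\overline Y}(y,Cs)}$ is admissible and $\Mod\Gamma(\tau^+,\tau^-;\overline Y)\lesssim s^{-2}\mathcal H^2_{\overline Y}(B_{\overline Y}(y,Cs))$, with the analogous estimate holding for the conjugate family. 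Combining the three estimates gives $\kappa^{-1}\lesssim\bigl(s^{-2}\mathcal H^2_{\overline Y}(B_{\overline Y}(y,Cs))\bigr)^2$, that is $\mathcal H^2_{\overline Y}(B_{\overline Y}(y,t))\gtrsim t^2$ with a constant depending only on $\lambda$ and $\kappa$, as desired.

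\emph{The main obstacle.} The packing step and the elementary modulus upper bounds are routine bookkeeping. The real work is in the mass lower bound: one must make precise the construction of the quadrilateral---choosing the cut points and the auxiliary scales so that $\partial Y$, the metric balls $B_{\overline Y}(y,\cdot)$, and the four sides interact as claimed (this is exactly where bounded turning is used), and so that the conjugate curve family is also forced to accumulate definite length near $y$---and then justify the invocation of reciprocality, most robustly by passing to the double $\widehat Y$, verifying it is a metric surface of finite Hausdorff $2$-measure, applying the Rajala--Romney reciprocal lower bound there, and folding admissible metrics back to $\overline Y$. Controlling the topology of metric balls for a cofinal set of radii, which is the standard difficulty in this circle of ideas, is the delicate point; everything else depends only on $\lambda$ and the reciprocality constant.
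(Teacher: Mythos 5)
Your overall strategy matches the paper's: establish the boundary mass lower bound $\mathcal{H}^2_{\overline{Y}}(\overline{B}_{\overline{Y}}(y,t)) \gtrsim t^2$ using bounded turning, then run a packing argument against the upper bound \eqref{eq:area:growth:intro}; your packing step is essentially identical to the paper's proof of \Cref{lemm:assouad}. Where you genuinely diverge is the mass lower bound, which is \Cref{lemm:Ahlfors:lowerbound} in the paper. The paper's route is elementary: with $f = d(y,\cdot)$, the topological separation lemma \Cref{thm:topology} produces a continuum $C_r \subset f^{-1}(r)$ whose two endpoints on $\partial Y$ are $\lambda^{-1}r$-separated by bounded turning, hence $\mathcal{H}^1_{\overline{Y}}(f^{-1}(r)) \geq \lambda^{-1}r$, and the coarea inequality \Cref{thm:eilenberg} integrates this in $r$ to give the quadratic bound---all within $\overline{Y}$. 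You instead build a quadrilateral $(\tau^+,\sigma_1,\tau^-,\sigma_2)$ around $y$ at scale $s$ and invoke the universal Rajala--Romney lower bound $\kappa_0$ on the product \eqref{upper:bound} of conjugate moduli. Both arguments ultimately use bounded turning to produce comparably separated pieces of $\partial Y$ at scale $s$ near $y$, but your route is heavier machinery for the same output: \eqref{upper:bound} and the $\kappa_0$-bound of \cite{RR:19} concern quadrilaterals inside a metric surface (a $2$-manifold without boundary), so one must pass to the metric double $\widehat{Y}$ of $\overline{Y}$ across $\partial Y$, verify it is a metric surface with locally finite $\mathcal{H}^2$, apply the bound there, and transfer the modulus estimates back to $\overline{Y}$ (e.g.\ by reflecting admissible metrics, costing a factor of $4$). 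One also has to establish $\dist(\sigma_1,\sigma_2) \gtrsim s$ for the conjugate-family admissibility estimate, which requires its own bounded-turning argument beyond the one giving $d(a,b) \geq s/\lambda$. You correctly flag these as the delicate points; the coarea route avoids all of them and is what the paper does.
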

It is not clear if $\partial Y$ in the above statement must be planar. However, if $Y$ is \emph{annularly linearly locally connected (ALLC)} and \emph{Ahlfors $2$-regular}, then $\partial Y$ is a planar quasicircle \cite[Theorems 8.1 and 8.2]{Mer:Wil:13}; see \cite{Mer:Wil:13} for the proofs and terminology. Quasiconformal Jordan domains satisfying these stronger assumptions appear in \cite{Wil:08} and \cite{Bo:Hei:Roh:01}.

We localize these assumptions in the following statement and obtain the same conclusion.
\begin{thm}\label{prop:planar:QC}
Let $Y$ be a quasiconformal Jordan domain such that $\partial Y$ is a quasicircle and its boundary points satisfy the area growth \eqref{eq:area:growth:intro}. Then the Assouad dimension of $\partial Y$ is strictly less than two if the following two conditions are satisfied for some $r_{0} > 0$, $C > 0$ and $\lambda > 1$:
\begin{enumerate}[label=(\alph*)]
    \item\label{prop:planar:QC.1} For every $y \in \partial Y$ and $0 < 2r < R < r_{0}$ and any pair $a, b \in \overline{B}_{ \overline{Y} }( y, R ) \setminus B_{ \overline{Y} }( y, r )$, there exists a path $\abs{ \alpha } \subset \overline{ B }_{ \overline{Y} }( y, \lambda R ) \setminus B_{ \overline{Y} }( y, \lambda^{-1}r )$ containing $a$ and $b$.
    \item\label{prop:planar:QC.2} For every $z \in Y$ with $0 < r < d( z, \partial Y ) \leq r_{0}$, $\mathcal{H}^{2}_{ \overline{Y} }( \overline{ B }_{ \overline{Y} }( z, r ) ) \geq C^{-1} r^{2}$.
\end{enumerate}
In particular, $\partial Y$ is planar.
\end{thm}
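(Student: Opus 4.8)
The plan is to establish that $\dim_{A}\partial Y<2$; the final assertion, that $\partial Y$ is planar, then follows from the characterisation of planar quasicircles in \cite{Her:Mey:12}, since $\partial Y$ is a quasicircle. By \Cref{lemm:assouad} we already have $\dim_{A}\partial Y\le 2$ (and, in particular, that $\partial Y$ is doubling), so only the strict inequality is at stake. The mechanism I would exploit is that condition \ref{prop:planar:QC.1} is an annular linear local connectivity condition along $\partial Y$ at small scales, and such a condition prevents $Y$ from degenerating (cusping) near $\partial Y$. Quantitatively, it should force an interior corkscrew condition, and together with the two-sided area bounds coming from \eqref{eq:area:growth:intro} and \ref{prop:planar:QC.2} this makes $\partial Y$ a porous subset of $\overline{Y}$, from which the dimension drop is classical.

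The technical heart is the claim that \emph{there are $\rho\in(0,1)$ and $r_{1}>0$ so that for every $y\in\partial Y$ and $0<R<r_{1}$ there exists $w\in Y$ with $d(w,y)<R$ and $d(w,\partial Y)>\rho R$.} I would prove this by contradiction. If it fails there are $y_{n}\in\partial Y$ and $R_{n}\to 0$ such that every point of $Y$ in a ball $B_{\overline{Y}}(y_{n},cR_{n})$, with $c$ depending only on $\lambda$, lies within $\delta_{n}$ of $\partial Y$, where $\delta_{n}/R_{n}\to 0$. Tracing $\partial Y$ away from $y_{n}$ in the two directions until it first meets the sphere $\{\,d(\cdot,y_{n})=R_{n}/4\,\}$ produces points $a_{n},b_{n}$ and splits $\partial Y$ into a short subarc $\alpha_{n}\subseteq\overline{B}_{\overline{Y}}(y_{n},R_{n}/4)$ through $y_{n}$ with endpoints $a_{n},b_{n}$, and a complementary subarc $\beta_{n}$ of diameter comparable to $\diam\partial Y$. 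Applying \ref{prop:planar:QC.1} with radii of order $R_{n}$ yields a path $\gamma_{n}$ through $a_{n}$ and $b_{n}$ contained in the shell $\overline{B}_{\overline{Y}}(y_{n},\tfrac{\lambda}{2}R_{n})\setminus B_{\overline{Y}}(y_{n},\tfrac{1}{8\lambda}R_{n})$. Since neither $\alpha_{n}$ (which meets $y_{n}$) nor $\beta_{n}$ (which is too large) can lie in this shell, $\gamma_{n}$ must enter $Y$, and by the failure of the claim $\gamma_{n}$ stays within $\delta_{n}$ of $\partial Y$. Thus $\gamma_{n}$ is a connected set joining $a_{n}$ to $b_{n}$, staying at distance $\gtrsim R_{n}$ from $y_{n}$ and within $\delta_{n}$ of $\partial Y$; but for $n$ large this should be impossible, since near $y_{n}$ the curve $\partial Y$ consists only of the arc $\alpha_{n}$ (which is pinched at $y_{n}$) and two short stubs of $\beta_{n}$ that leave the ball, so the portion of the $\delta_{n}$-neighbourhood of $\partial Y$ inside that shell containing $a_{n}$ is disjoint from the one containing $b_{n}$. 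I expect this separation step to be the main obstacle: making it rigorous in an abstract metric surface is delicate, and I would carry it out by transporting the picture to $\overline{\mathbb{D}}$ via the Carathéodory extension $\Phi$ of \Cref{thm:carat:metric}, where $\gamma_{n}$ and $\alpha_{n}$ close up to a Jordan curve in the genuine closed disk cutting off a crosscut region, and then conclude by plane topology together with the bounded turning of $\partial Y$ and, once more, \ref{prop:planar:QC.1}.

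Granting the corkscrew claim, the rest is routine. For $y\in\partial Y$ and small $R$, the corkscrew ball $\overline{B}_{\overline{Y}}(w,\rho R/2)$ is disjoint from $\partial Y$ and contained in $\overline{B}_{\overline{Y}}(y,2R)$, so \ref{prop:planar:QC.2} gives $\mathcal{H}^{2}_{\overline{Y}}(\overline{B}_{\overline{Y}}(y,2R))\ge\mathcal{H}^{2}_{\overline{Y}}(\overline{B}_{\overline{Y}}(w,\rho R/2))\gtrsim R^{2}$; combined with the upper bound \eqref{eq:area:growth:intro} this yields $\mathcal{H}^{2}_{\overline{Y}}(\overline{B}_{\overline{Y}}(y,r))\asymp r^{2}$ for $y\in\partial Y$ and small $r$, while \ref{prop:planar:QC.2} together with a short case analysis gives $\mathcal{H}^{2}_{\overline{Y}}(\overline{B}_{\overline{Y}}(x,r))\gtrsim r^{2}$ for every $x$ near $\partial Y$ and every small $r$. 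Moreover the corkscrew balls witness that $\partial Y$ is a $\rho'$-porous subset of $\overline{Y}$ at small scales. Finally, a porous subset of a metric space carrying a measure that is Ahlfors $2$-regular at small scales in a neighbourhood of the subset has Assouad dimension strictly below $2$: one covers $\overline{B}_{\partial Y}(y_{0},R)$ by $r$-balls and, using porosity, the lower mass bound at points near $\partial Y$, and the upper mass bound \eqref{eq:area:growth:intro} at $y_{0}$, shows that at each dyadic scale a fixed proportion of the covering mass is spent on $\partial Y$-free balls, giving a factor $(1-c)$ per scale and hence a bound of order $(R/r)^{2-\varepsilon}$. Applying this to $\partial Y\subseteq\overline{Y}$ yields $\dim_{A}\partial Y<2$, and \cite{Her:Mey:12} then shows $\partial Y$ is planar; this also recovers the planarity conclusion of \cite{Mer:Wil:13} under the present, purely local, hypotheses.
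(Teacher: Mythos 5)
Your high-level architecture matches the paper's: reduce to showing $\dim_A \partial Y < 2$, and do so by establishing that $\partial Y$ is porous in $\overline{Y}$ at small scales (your ``interior corkscrew'' claim is exactly the paper's porosity lemma, \Cref{eq:boundary:porosity}) and then converting porosity plus the mass bounds into a dimension drop. So you have found the right key lemma. However, there are two places where the proposal does not close.

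The main gap is exactly the one you flag yourself: the proof of the corkscrew claim. You propose a proof by contradiction, in which you would show that along a failing sequence the ALLC path $\gamma_n$ would have to travel through a thin $\delta_n$-neighbourhood of $\partial Y$ in the annular shell, and that the components of this neighbourhood near $a_n$ and near $b_n$ are disjoint. This separation statement is not automatic: inside the shell $\overline{B}(y_n,\tfrac{\lambda}{2}R_n)\setminus B(y_n,\tfrac{1}{8\lambda}R_n)$, the curve $\partial Y$ contributes not only the two arms of $\alpha_n$ near $a_n,b_n$ but possibly additional returns of $\beta_n$, and $\alpha_n$ itself is pinched at $y_n$ which lies in the \emph{inner} hole, not the shell; controlling the components of the $\delta_n$-neighbourhood therefore requires a real topological argument, and your proposal to transport the picture to $\overline{\mathbb D}$ via $\Phi$ is not carried out. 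The paper sidesteps all of this with a direct, constructive argument: take the two components $|\gamma_a|,|\gamma_b|$ of $\partial Y\setminus\{x,z\}$ for some far $z\in\partial Y$, take the ALLC arc $\alpha$ from $a$ to $b$; note that $d(a,|\gamma_b|)\gtrsim r$ (from bounded turning) while $b\in|\gamma_b|$, so by connectedness of $|\alpha|$ there is a point $v\in|\alpha|$ with $d(v,|\gamma_b|)=r/s$ exactly; then one shows directly (again via bounded turning) that also $d(v,|\gamma_a|)\geq r/s$, so $B(v,r/s)\cap\partial Y=\emptyset$. No contradiction, no passage through $\Phi$, and the constant is explicit.

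The second, smaller issue is that the ``routine'' conversion of porosity into $\dim_A<2$ is not as routine as you suggest, because the lower mass bound \ref{prop:planar:QC.2} is only hypothesised for balls centred in $Y$ and \emph{strictly inside} $Y$ (radius $<d(z,\partial Y)$), not for balls hitting $\partial Y$. The paper handles this by building a disjoint collection of interior balls at all scales (\Cref{lemm:5r-covering}), proving a logarithmic lower bound on the counting function $T(x)=\sum_B \chi_{\xi B}(x)$ for $x$ near $\partial Y$ (\Cref{lemm:accumulation:balls}), establishing a doubling estimate for the dilated balls (\Cref{lemm:doublingcondition}), and then invoking the exponential decay estimate of \cite[Theorem 9.6(b)]{Bo:Kos:Roh:98}. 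Your ``$(1-c)$ per dyadic scale'' heuristic is the right mechanism behind that theorem, but to make it rigorous you would essentially need to reconstruct this machinery; it is not a short case analysis. So the proposal identifies the correct strategy, but both of its two steps are left with real work to do, and in the first step the paper's direct argument is both simpler and strictly more reliable than the proposed contradiction.
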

If \ref{prop:planar:QC.1} holds we say that $\partial Y$ is \emph{relatively ALLC} and if \ref{prop:planar:QC.2} holds, we say that $Y$ satisfies the \emph{Ahlfors lower bound} near $\partial Y$. The main point of \Cref{prop:planar:QC} is to only restrict the geometry of $\overline{Y}$ near the boundary $\partial Y$.

The relative ALLC guarantees that $\partial Y$ is \emph{porous} in $\overline{Y}$ below the given scale $r_{0}$ (\Cref{eq:boundary:porosity}). The porosity allows us to pack many balls in $Y$, well-disjoint from $\partial Y$, near all points of $\partial Y$ at all scales below $r_{0}$. Now the Ahlfors lower bound, valid for such balls, combined with the upper bound \eqref{eq:area:growth:intro} allow us to control quantitatively the total amount of such non-overlapping balls in a given interval of scales. This quantification allows us to prove planarity for $\partial Y$. This idea appears in \cite[Lemma 3.12]{Bo:Hei:Roh:01}, where the authors prove that a compact set in an Ahlfors regular space is porous if and only if its Assouad dimension is strictly smaller than the homogeneous dimension of the space. A similar argument also works in the setting of \Cref{prop:planar:QC}.

\subsection{Outline}
In \Cref{sec:preliminaries}, we introduce the notations we use and some preliminary results. In \Cref{sec:car}, we prove \Cref{thm:carat:metric}. \Cref{thm:QS} and \Cref{lemm:assouad} are proved in \Cref{sec:BA}. \Cref{prop:planar:QC} is proved in \Cref{sec:PQ}. \Cref{sec:conc} contains some concluding remarks.

\section{Preliminaries}\label{sec:preliminaries}

\subsection{Notation} \label{sec:notation} 

    Let $(Y,d_{Y})$ be a metric space. The open ball centered at a point $y \in Y$ of radius $r>0$ with respect to the metric $d$ is denoted by $B_{Y}(y,r)$. The closed ball is denoted by $\overline{B}_{ Y }( y, r )$. We sometimes omit the subscript from $d_{Y}$, from $B_{Y}$, and from $\overline{B}_{Y}$, respectively.

    We recall the definition of Hausdorff measure. Let $(Y,d)$ be a metric space. For all $Q \geq 0$, the \emph{$Q$-dimensional Hausdorff measure (Hausdorff $Q$-measure)} is defined by
	\[
	    \mathcal{H}_Y^{Q}(B)
	    =
	    \frac{\alpha(Q)}{2^Q}
	    \sup_{ \delta > 0 }
	    \inf
	        \left\{
	            \sum_{i=1}^\infty (\diam B_i)^Q
	            \colon
	            B \subset \bigcup_{i=1}^\infty B_i, 
	            \diam B_i < \delta
	        \right\} 
	\]
	for all sets $B \subset Y$, where $\alpha(Q) = \pi^{ \frac{Q}{2} } \left( \Gamma\left( Q/2 + 1 \right) \right)^{-1}$. The constant $\alpha(Q)$ is chosen in such a way that $\mathcal{H}^{n}_{\mathbb{R}^{n}}$ coincides with the Lebesgue measure $\mathcal{L}^{n}$ for all positive integers.
    
    The \emph{length} of a path $\gamma\colon [a,b] \to Y$ is defined as 
    \[ 
        \ell_d(\gamma) = \sup \sum_{i=1}^n d(\gamma(t_{i-1}), \gamma(t_i)),
    \]
    the supremum taken over all finite partitions $a = t_0 \leq t_1 \leq \cdots \leq t_n = b$. A path is \emph{rectifiable} if it has finite length.
    
    The \emph{metric speed} of a path $\gamma \colon \left[a,  b\right] \rightarrow Y$ at the point $t \in \left[a,  b\right]$ is defined as
    \begin{equation*}
    \label{eq:metric:speed:path}
        v_{\gamma}(t)
        =
        \lim_{ h \rightarrow 0^{+} }
            \frac{ d( \gamma( t + h ),  \gamma( t ) ) }{ h }
    \end{equation*}
    whenever this limit exists. If $\gamma$ is rectifiable, its metric speed exists at $\mathcal{L}^{1}$-almost every $t \in \left[a,  b\right]$ \cite[Theorem 2.1]{Dud:07}.

    A rectifiable path $\gamma\colon [a,b] \to Y$ is \emph{absolutely continuous} if for all $a \leq s \leq t \leq b$,
    \begin{equation*}
    \label{eq:AC:characterization}
        d( \gamma(t),  \gamma(s) )
        \leq
        \int_{ s }^{ t }
            v_{ \gamma }( u )
        \,d\mathcal{L}^{1}( u )
    \end{equation*}
    with $v_{ \gamma } \in L^{1}( \left[a,  b\right] )$ and $\mathcal{L}^{1}$ the Lebesgue measure on the real line. Equivalently, $\gamma$ is absolutely continuous if it maps sets of $\mathcal{L}^{1}$-measure zero to sets of $\mathcal{H}_Y^{1}$-measure zero in its image \cite[Section 3]{Dud:07}.
    
    Let $\gamma \colon \left[a, b\right] \rightarrow X$ be an absolutely continuous path. Then the \emph{path integral} of a Borel function $\rho \colon X \rightarrow \left[0,  \infty\right]$ \emph{over $\gamma$} is
    \begin{equation}
    \label{eq:path:integral}
        \int_{ \gamma }
            \rho
        \,ds
        =
        \int_{ a }^{ b }
            ( \rho \circ \gamma )
            v_{ \gamma }
        \,d\mathcal{L}^{1}.
    \end{equation}
    If $\gamma$ is rectifiable, then the \emph{path integral} of $\rho$ over $\gamma$ is defined to be the path integral of $\rho$ over the arc length parametrization $\gamma_{s}$ of $\gamma$; see for example Chapter 5 of \cite{HKST:15}.

    \subsection{Quasiconformal Jordan domains}
    We assume that $Y$ is a quasiconformal Jordan domain. In particular, its completion $\overline{Y}$ is homeomorphic to $\left[0, 1\right]^{2}$ and has finite Hausdorff $2$-measure.
    
    Given a Borel set $A \subset Y$, the \emph{length} of a path $\gamma \colon \left[a, b\right] \rightarrow Y$ \emph{in $A$} is defined as $\int_{ Y } \chi_{A}(y) \#( \gamma^{-1}(y) ) \,d\mathcal{H}^{1}_{Y}(y)$, where $\#( \gamma^{-1}(x) )$ is the counting measure of $\gamma^{-1}(x)$. This formula makes sense for paths that are not necessarily rectifiable \cite[Theorem 2.10.13]{Fed:69}. When $\gamma$ is rectifiable, for every Borel function $\rho \colon \overline{Y} \rightarrow \left[0, \infty\right]$,
    \begin{equation}
        \label{eq:areaformula}
        \int_{ \gamma } \rho \,ds
        =
        \int_{ \overline{Y} }
            \rho( x )
            \#( \gamma^{-1}(x) )
        \,d\mathcal{H}^{1}_{\overline{Y}}(x).
    \end{equation}
    The equality \eqref{eq:areaformula} follows from \cite[Theorem 2.10.13]{Fed:69} via a standard approximation argument using simple functions.

    We recall a special case of the coarea inequality \cite[2.10.25]{Fed:69}. Let $\alpha \in \left\{ 0, 1\right\}$, $B \subset \overline{Y}$ Borel, and $f \colon \overline{Y} \rightarrow \mathbb{R}$ $1$-Lipschitz. Then
    \begin{equation}
        \label{eq:eilenberg}
        \int_{ \mathbb{R} }^{*}
            \mathcal{H}^{\alpha}_{\overline{Y}}( B \cap f^{-1}(t) )
        \,d\mathcal{L}^{1}(t)
        \leq
        C_{\alpha}
        \mathcal{H}^{\alpha+1}_{ \overline{Y} }( B ),
    \end{equation}
    where $C_{0} = 1$ and $C_{1} = 4/\pi$. Here $\int^{*}$ refers to the upper integral \cite[2.4.2]{Fed:69}. If $\mathcal{H}^{\alpha+1}_{\overline{Y}}( B ) < \infty$, the upper integral can be replaced with the usual one \cite[2.10.26]{Fed:69}.
    
    Via a standard approximation argument using simple functions, we obtain the following.
    \begin{thm}\label{thm:eilenberg}
    Let $f \colon \overline{Y} \rightarrow \mathbb{R}$ be $1$-Lipschitz, $\alpha \in \left\{0, 1\right\}$ and $C_{\alpha}$ as in \eqref{eq:eilenberg}. Then, for every Borel function $g \colon \overline{Y} \rightarrow \left[0, \infty\right]$,
    \begin{equation*}
        \int_{ \mathbb{R} }^{*}
        \int_{ f^{-1}(t) }
            g(y)
        \,d\mathcal{H}^{\alpha}_{\overline{Y}}(y)
        \,d\mathcal{L}^{1}(t)
        \leq
        C_{\alpha}
        \int_{ \overline{Y} }
            g(y)
        \,d\mathcal{H}^{\alpha+1}_{\overline{Y}}(y).
    \end{equation*}
    When $g$ is $\mathcal{H}^{\alpha+1}_{\overline{Y}}$-integrable, the upper integral can be replaced with the usual one.
    \end{thm}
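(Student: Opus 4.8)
The statement is the standard ``approximation by simple functions'' upgrade of the geometric inequality \eqref{eq:eilenberg}, which is exactly the case $g = \chi_{B}$ of the theorem. I would bootstrap in two stages and then address the integrability clause separately.

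\emph{Step 1 (simple functions).} Verify the inequality for a nonnegative Borel simple function $g = \sum_{i=1}^{n} c_{i}\chi_{B_{i}}$ with $c_{i} > 0$ and the $B_{i} \subset \overline{Y}$ pairwise disjoint Borel sets. For each fixed $t$, linearity of the integral over the slice $f^{-1}(t)$ gives $\int_{f^{-1}(t)} g \, d\mathcal{H}^{\alpha}_{\overline{Y}} = \sum_{i} c_{i}\, \mathcal{H}^{\alpha}_{\overline{Y}}(B_{i} \cap f^{-1}(t))$ (each $B_{i}\cap f^{-1}(t)$ is Borel, hence $\mathcal{H}^{\alpha}_{\overline{Y}}$-measurable). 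Taking the upper integral in $t$ and using its subadditivity and positive homogeneity, then \eqref{eq:eilenberg} for each $B_{i}$, yields
\[
    \int_{\mathbb{R}}^{*} \int_{f^{-1}(t)} g \, d\mathcal{H}^{\alpha}_{\overline{Y}}\, d\mathcal{L}^{1}(t)
    \le
    \sum_{i} c_{i}\, C_{\alpha}\, \mathcal{H}^{\alpha+1}_{\overline{Y}}(B_{i})
    =
    C_{\alpha} \int_{\overline{Y}} g \, d\mathcal{H}^{\alpha+1}_{\overline{Y}},
\]
which is the desired bound; only subadditivity of the upper integral is used, which always holds.

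\emph{Step 2 (general $g$).} Take Borel simple functions $0 \le g_{k} \nearrow g$ pointwise. For each $t$, ordinary monotone convergence on $(f^{-1}(t), \mathcal{H}^{\alpha}_{\overline{Y}})$ shows $G_{k}(t) := \int_{f^{-1}(t)} g_{k}\, d\mathcal{H}^{\alpha}_{\overline{Y}}$ increases to $G(t) := \int_{f^{-1}(t)} g\, d\mathcal{H}^{\alpha}_{\overline{Y}}$. I then need a monotone convergence principle for the upper integral, namely $\int_{\mathbb{R}}^{*} G\, d\mathcal{L}^{1} \le \liminf_{k} \int_{\mathbb{R}}^{*} G_{k}\, d\mathcal{L}^{1}$. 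This is elementary: given $\varepsilon > 0$, pick $\mathcal{L}^{1}$-measurable majorants $\widetilde{G}_{k} \ge G_{k}$ with $\int \widetilde{G}_{k}\, d\mathcal{L}^{1} \le \int_{\mathbb{R}}^{*} G_{k}\, d\mathcal{L}^{1} + \varepsilon$, and set $\widehat{G}_{k} := \inf_{j \ge k} \widetilde{G}_{j}$; then $\widehat{G}_{k}$ is measurable, $G_{k} \le \widehat{G}_{k}$ (because $\widetilde{G}_{j} \ge G_{j} \ge G_{k}$ for $j \ge k$), the $\widehat{G}_{k}$ increase, so $\widehat{G} := \sup_{k} \widehat{G}_{k}$ is a measurable majorant of $G$ and ordinary monotone convergence gives $\int_{\mathbb{R}}^{*} G \le \int_{\mathbb{R}} \widehat{G} = \lim_{k} \int_{\mathbb{R}} \widehat{G}_{k} \le \liminf_{k} \int_{\mathbb{R}}^{*} G_{k} + \varepsilon$. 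Letting $\varepsilon \to 0$, and combining with Step 1 applied to $g_{k}$ together with ordinary monotone convergence for $\int_{\overline{Y}} g_{k}\, d\mathcal{H}^{\alpha+1}_{\overline{Y}} \nearrow \int_{\overline{Y}} g\, d\mathcal{H}^{\alpha+1}_{\overline{Y}}$, gives the inequality for $g$.

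\emph{Step 3 (integrability clause).} If $\int_{\overline{Y}} g\, d\mathcal{H}^{\alpha+1}_{\overline{Y}} < \infty$, invoke \cite[2.10.26]{Fed:69}: for a Borel set $B$ with $\mathcal{H}^{\alpha+1}_{\overline{Y}}(B) < \infty$ the function $t \mapsto \mathcal{H}^{\alpha}_{\overline{Y}}(B \cap f^{-1}(t))$ is $\mathcal{L}^{1}$-measurable. Each level set of a simple $g_{k} \le g$ has finite $\mathcal{H}^{\alpha+1}_{\overline{Y}}$-measure, so every $G_{k}$ is $\mathcal{L}^{1}$-measurable, hence $G = \lim_{k} G_{k}$ is as well, and the upper integrals above become honest ones. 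I do not expect a genuine obstacle here: \eqref{eq:eilenberg} carries all the geometry and the Hausdorff-measure slices $f^{-1}(t)$ are treated one $t$ at a time. The only points that need care are purely measure-theoretic — that for a general Borel $B$ the slice function $t \mapsto \mathcal{H}^{\alpha}_{\overline{Y}}(B \cap f^{-1}(t))$ may fail to be $\mathcal{L}^{1}$-measurable (hence the upper integral and the little monotone convergence lemma in Step 2), and that finiteness of the total $\mathcal{H}^{\alpha+1}_{\overline{Y}}$-mass of $g$ forces finiteness of the mass of each level set of the approximating simple functions, which is what unlocks \cite[2.10.26]{Fed:69}.
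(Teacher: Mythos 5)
Your proof is correct and is exactly the \enquote{standard approximation argument using simple functions} that the paper invokes without spelling out; Steps~1 and~3 are routine, and the only genuinely non-trivial ingredient---the Fatou-type monotone convergence lemma for upper integrals in Step~2, needed because the slice function $t \mapsto \mathcal{H}^{\alpha}_{\overline{Y}}(B \cap f^{-1}(t))$ may fail to be $\mathcal{L}^{1}$-measurable for general Borel $B$---is correctly identified and correctly established. No gap.
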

    
    In the following, we say that $C \subset \overline{Y}$ is a \emph{continuum} if $C$ is compact and connected. A compact set $F \subset \overline{Y}$ \emph{separates} $x, y  \in \overline{Y}$ if $x, y \in \overline{Y}  \setminus F$ and the points are in different connected components of $\overline{Y} \setminus F$.

    \begin{lemm}\label{thm:topology}
    Let $F \subset \overline{Y}$ be compact and $x, y \in \overline{Y}$ separated by $F$. Then there exists a continuum $C \subset F$ that separates $x$ and $y$.
    \end{lemm}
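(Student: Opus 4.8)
Since $\overline{Y}$ is homeomorphic to the closed disk $\overline{\mathbb{D}}$, the assertion is purely topological: transporting $F$, $x$, and $y$ through such a homeomorphism, it suffices to prove it when $\overline{Y} = \overline{\mathbb{D}}$, and a continuum in $\overline{\mathbb{D}}$ separating the transported points then pulls back to the desired $C \subseteq F$. The plan is to double $\overline{\mathbb{D}}$ across its boundary circle into a $2$-sphere, reduce to the classical planar separation theorem on that sphere, and push the resulting continuum back down.

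In detail, let $S$ be the double of $\overline{\mathbb{D}}$ along $\mathbb{S}^{1}$, that is, the quotient of $\overline{\mathbb{D}} \times \{ +, - \}$ that identifies $(z,+)$ with $(z,-)$ for each $z \in \mathbb{S}^{1}$. Then $S$ is homeomorphic to $\mathbb{S}^{2}$, the two natural maps $\iota_{\pm} \colon \overline{\mathbb{D}} \rightarrow S$ are embeddings, and the folding map $r \colon S \rightarrow \overline{\mathbb{D}}$, $r([(z,\pm)]) = z$, is continuous with $r \circ \iota_{\pm} = \mathrm{id}$. Put $\widetilde{F} = r^{-1}(F) = \iota_{+}(F) \cup \iota_{-}(F)$, a nonempty compact subset of $S$ that avoids $\iota_{+}(x)$ and $\iota_{+}(y)$. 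The first step is to verify that $\widetilde{F}$ separates $\iota_{+}(x)$ from $\iota_{+}(y)$ in $S$: a path in $S \setminus \widetilde{F}$ joining them would project under $r$ to a path in $\overline{\mathbb{D}} \setminus F$ joining $x$ to $y$ (since $r^{-1}(F) = \widetilde{F}$), contradicting the hypothesis; here one uses that $S \setminus \widetilde{F}$ and $\overline{\mathbb{D}} \setminus F$ are open in locally path-connected spaces, so that their components coincide with their path components.

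The second step is to invoke the classical fact that a compact subset of $\mathbb{S}^{2}$ that separates two points has a connected component that already separates them. (This is a standard consequence of the Janiszewski / Phragmén--Brouwer property of $\mathbb{S}^{2}$ together with the coincidence of components and quasicomponents for compacta: cover $\widetilde{F}$ by finitely many pairwise disjoint relatively clopen compact pieces, each so close to a single one of its components that an arc joining the two points in the complement of that component also misses the piece; each piece is then non-separating, and Janiszewski's theorem adds them up.) Applied to $\widetilde{F}$, this produces a continuum $\widetilde{C} \subseteq \widetilde{F}$ separating $\iota_{+}(x)$ from $\iota_{+}(y)$ in $S$. Set $C = r(\widetilde{C})$. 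Then $C$ is a continuum, being a continuous image of one; $C \subseteq r(r^{-1}(F)) = F$; and $C$ separates $x$ from $y$ in $\overline{\mathbb{D}}$, because a path $\gamma$ in $\overline{\mathbb{D}} \setminus C$ from $x$ to $y$ would yield the path $\iota_{+} \circ \gamma$ in $S$ from $\iota_{+}(x)$ to $\iota_{+}(y)$ avoiding $\widetilde{C}$ --- indeed $\iota_{+}(\gamma(t)) \in \widetilde{C}$ would force $\gamma(t) = r(\iota_{+}(\gamma(t))) \in r(\widetilde{C}) = C$. Transporting $C$ back through the homeomorphism $\overline{Y} \cong \overline{\mathbb{D}}$ finishes the proof.

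The one ingredient that is not elementary bookkeeping with the folding map is the planar separation theorem for $\mathbb{S}^{2}$, which I regard as the crux; everything else amounts to checking that the double of a disk is a sphere, that $r$ is continuous, and that for open subsets of a manifold with boundary components equal path components. The point worth flagging is the choice to \emph{double} $\overline{\mathbb{D}}$ rather than to glue a single disk onto $\partial Y$: with one added disk a path could escape through it, whereas in the double the obstacle $F$ sits on both sheets, so the $r$-image of any path in $S \setminus \widetilde{F}$ is genuinely confined to $\overline{\mathbb{D}} \setminus F$ --- and it is this that makes the argument work equally for boundary points $x, y \in \partial Y$.
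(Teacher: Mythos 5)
Your proof is correct, and it takes a genuinely different route from the paper's. The paper embeds $\overline{Y}$ into $\mathbb{S}^{2}$ as the closed southern hemisphere $Z$, then has to block escape routes through the northern hemisphere: it augments $h(F)$ by the Jordan curves $\partial Z_{n}$ of a nested sequence of quadrilaterals shrinking to $Z$, applies the same classical separation theorem (Wilder) to each $F_{n} = \partial Z_{n} \cup h(F)$ to get continua $C_{n}$, and then passes to a Hausdorff limit to produce a continuum inside $h(F)$ that still separates. Your doubling trick sidesteps that entire limiting apparatus: since the obstacle $r^{-1}(F)$ is present on both sheets, a path in $S \setminus r^{-1}(F)$ folds down to a path in $\overline{\mathbb{D}} \setminus F$, so $r^{-1}(F)$ genuinely separates in the sphere with no auxiliary curves needed, and one application of the classical theorem followed by the folding map gives the continuum directly. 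Both proofs rest on the same nontrivial input --- that a compact subset of $\mathbb{S}^{2}$ separating two points has a component doing so --- which you correctly identify as the crux and correctly sketch via quasicomponents and Janiszewski. What your approach buys is the elimination of the Hausdorff-convergence step (including the verification that the limit continuum lands inside $h(F)$ rather than on the accumulation of the $\partial Z_n$), at the cost of introducing the double, which is mild; it is arguably the cleaner argument. One small remark: it would be worth stating explicitly that $\iota_{+}(x), \iota_{+}(y) \notin \widetilde{F}$, which follows since $\iota_{+}(x) \in \iota_{-}(F)$ would force $x \in \mathbb{S}^{1} \cap F$, contrary to $x \notin F$.
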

    \begin{proof}
    Let $x$, $y$ and $F$ be as in the claim. We consider a homeomorphism $h \colon \overline{Y} \rightarrow Z \subset \mathbb{S}^{2}$, where $Z$ is the union of the equator and the southern hemisphere of $\mathbb{S}^{2}$. Then there exists a nested sequence of quadrilaterals $Z_{n} \supset Z_{n+1} \supset Z$ such that $h(x), h(y) \in Z_{n}$ is an interior point of $Z_{n}$ for each $n \in \mathbb{N}$ and $Z = \bigcap_{ n = 1 }^{ \infty } Z_{n}$.
    
    Since $F_{n} = \partial Z_{n} \cup h(F) \subset Z_{n}$ separates $h(x)$ and $h(y)$ in $\mathbb{S}^{2}$, there exists a continuum $C_{n} \subset F_{n}$ separating $h(x)$ and $h(y)$ in $\mathbb{S}^{2}$ \cite[Chapter 2, Lemma 5.20]{Wil:79}. In particular, for every path $\gamma \colon \left[0, 1\right] \rightarrow Z$ joining $h(x)$ to $h(y)$, there exists $z_{n} \in C_{n} \cap |\gamma|$ for every $n \in \mathbb{N}$. Up to passing to a subsequence and relabeling, the continua $( C_{n} )_{n=1}^{\infty}$ converge to a continuum $C' \subset \bigcap_{ n = 1 }^{ \infty } h(F) \cap Z_{n} = h(F)$ in the Hausdorff convergence \cite[Theorems 4.4.15 and 4.4.17]{Amb:Til:04}. If $\gamma$ and $( z_{n} )_{ n = 1 }^{ \infty }$ are as above, the accumulation points of $( z_{n} )_{ n = 1 }^{ \infty }$ are contained in $C' \cap |\gamma|$ \cite[Proposition 4.4.14]{Amb:Til:04}. Consequently, $C' \cap |\gamma| \neq \emptyset$ for every such $\gamma$. Hence $C = h^{-1}( C' ) \subset F$ is a continuum separating $x$ and $y$.
    \end{proof}

\subsection{Metric Sobolev spaces}\label{sec:sobolev}

    In this section we give an overview of Sobolev theory in the metric surface setting, and refer to \cite{HKST:15} for a comprehensive introduction.

    Let $\Gamma \subset \mathcal{C}( \left[0, 1\right]; Y )$ be a family of rectifiable paths in $Y$. A Borel function $\rho\colon Y \to [0, \infty]$ is \emph{admissible} for $\Gamma$ if the path integral $\int_\gamma \rho\,ds \geq 1$ for all rectifiable paths $\gamma \in \Gamma$. The \emph{modulus} of $\Gamma$ is
        \[\Mod \Gamma = \inf \int_Y \rho^2\,d\mathcal{H}_Y^2,\]
    where the infimum is taken over all admissible functions $\rho$. Observe that if $\Gamma_{1}$ and $\Gamma_{2}$ are path families and every path $\gamma_{1} \in \Gamma_{1}$ contains a subpath $\gamma_{2} \in \Gamma_{2}$, then $\Mod \Gamma_{1} \leq \Mod \Gamma_{2}$. In particular, this holds if $\Gamma_{1} \subset \Gamma_{2}$. A property holds for \emph{almost every} path if the family of paths for which the property fails has zero modulus.
    
    Let $\psi \colon (Y,d_Y) \to (Z,d_Z)$ be a mapping between metric spaces $Y$ and $Z$. A Borel function $\rho \colon Y \to [0, \infty]$ is an \emph{upper gradient of $\psi$} if
    \[
        d_Y(\psi(x),\psi(y)) \leq \int_\gamma \rho\,ds
    \]
    for every rectifiable path $\gamma\colon [0,1] \to Y$ connecting $x$ to $y$. The function $\rho$ is a \emph{weak upper gradient of $\psi$} if the same holds for almost every rectifiable path.
    
    A weak upper gradient $\rho \in L^{2}_{\loc}( Y )$  of $\psi$ is \emph{minimal} if it satisfies $\rho \leq \widetilde{\rho}$ almost everywhere for all weak upper gradients $\widetilde{\rho} \in L^{2}_{\loc}( Y )$ of $\psi$. If $\psi$ has a weak upper gradient $\rho \in L^{2}_{\loc}( Y )$, then $\psi$ has a minimal weak upper gradient, which we denote by $\rho_\psi$. We refer to Section 6 of \cite{HKST:15} and Section 3 of \cite{Wil:12} for details.

    Fix a point $z \in Z$, and let $d_z = d_Z(\cdot,z)$. The space $L^{2}( Y,  Z )$ is defined as the collection of measurable maps $\psi \colon Y \to Z$ such that $d_z \circ \psi$ is in $L^2(Y)$.

    Moreover, $L^{2}_{\loc}( Y,  Z )$ is defined as those measurable maps $\psi \colon Y \to Z$ for which, for all $y \in Y$, there is an open set $U \subset Y$ containing $y$ such that $\psi|_U$ is in $L^2(U,Z)$.
    
    The metric Sobolev space $N^{1,  2}_{\loc}( Y,  Z )$ consists of those maps $\psi \colon Y \rightarrow Z$ in $L^{2}_{\loc}( Y,  Z )$ that have a minimal weak upper gradient $\rho_{ \psi } \in L^{2}_{\loc}( Y )$.
    
    For open $\emptyset \neq U \subset Y$, we say that $\psi \in N^{1,  2}( U,  Z )$ if $\psi|_{U} \in N^{1,  2}_{\loc}( U, Z )$, $\rho_{\psi|_U} \in L^{2}( U )$ and $\psi|_{U} \in L^{2}( U, Z )$.

    Given a homeomorphism $\psi \colon Y \rightarrow Z$, the pullback measure $\psi^{*}\mathcal{H}^{2}_{Z}$ is defined by $\psi^{*}\mathcal{H}^{2}_{Z}( B ) = \mathcal{H}^{2}_{Z}( \psi(B) )$ for each Borel set $B \subset Y$. The pullback measure has a decomposition $\psi^{*}\mathcal{H}^{2}_{Z} = J_{\psi} \mathcal{H}^{2}_{Y} + \mu^{\perp}$, where $J_{\psi}$ is locally integrable with respect to $\mathcal{H}^{2}_{Y}$, and the measures $\mathcal{H}^{2}_{Y}$ and $\mu^{\perp}$ are singular \cite[Sections 3.1-3.2 in Volume I]{Bog:07}. We call the density $J_{\psi}$ the \emph{Jacobian} of $\psi$.

\subsection{Quasiconformal mappings} \label{sec:quasiconformal_mappings}
    We define quasiconformal maps and recall some basics.
    \begin{defi}\label{def:QCmaps}
    Let $( Y, d_{Y} )$ and $( Z, d_{Z} )$ be metric spaces with locally finite Hausdorff $2$-measures. We say that a homeomorphism $\psi \colon ( Y, d_{Y} ) \rightarrow ( Z, d_{Z} )$ is \emph{quasiconformal} if
    there exists $K \geq 1$ such that for all path families $\Gamma$ in $Y$
    \begin{equation}
        \label{eq:def:QCmaps}
        K^{-1} \Mod \Gamma
        \leq
        \Mod \psi \Gamma
        \leq
        K \Mod \Gamma,
    \end{equation}
    where $\psi \Gamma = \left\{ \psi \circ \gamma \colon \gamma \in \Gamma \right\}$. If \eqref{eq:def:QCmaps} holds with a constant $K \geq 1$, we say that $\psi$ is \emph{$K$-quasiconformal}.
    \end{defi}
    \Cref{def:QCmaps} is sometimes called the \emph{geometric} definition of quasiconformality. A special case of \cite[Theorem 1.1]{Wil:12} yields the following.
    \begin{thm} \label{prop:williams:L-Wversion}
    Let $Y$ and $Z$ be metric spaces with locally finite Hausdorff $2$-measure and $\psi \colon Y \rightarrow Z$ a homeomorphism. The following are equivalent for the same constant $K > 0$:
    \begin{enumerate}
        \item[(i)] $\Mod \Gamma \leq K \Mod \psi\Gamma$ for all path families $\Gamma$ in $Y$.
        \item[(ii)] $\psi \in N_{\loc}^{1,2}(Y,Z)$ and satisfies $\rho_\psi^2(y) \leq KJ_\psi(y)$ for $\mathcal{H}_Y^2$-almost every $y \in Y$.
    \end{enumerate}
    \end{thm}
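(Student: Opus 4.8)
The statement matches one of the two inequalities in the geometric definition of quasiconformality with its analytic counterpart; it is precisely \cite[Theorem 1.1]{Wil:12} specialized to exponent $Q=2$ with the underlying measures taken to be $\mathcal{H}^2_Y$ and $\mathcal{H}^2_Z$. I would prove the two implications separately: (ii) $\Rightarrow$ (i) by a direct computation, and (i) $\Rightarrow$ (ii) by quoting \cite{Wil:12}.

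\textbf{From (ii) to (i).} Assume $\psi\in N^{1,2}_{\loc}(Y,Z)$ with $\rho_\psi^2\le KJ_\psi$ $\mathcal{H}^2_Y$-almost everywhere. Fix a path family $\Gamma$ in $Y$ and a Borel function $\rho$ admissible for $\psi\Gamma$. Membership in $N^{1,2}_{\loc}$ gives a family $\Gamma_0$ with $\Mod\Gamma_0=0$ such that for every rectifiable $\gamma\in\Gamma\setminus\Gamma_0$ the curve $\psi\circ\gamma$ is absolutely continuous with $v_{\psi\circ\gamma}\le(\rho_\psi\circ\gamma)\,v_\gamma$ almost everywhere. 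Then $\rho'=(\rho\circ\psi)\,\rho_\psi$ satisfies $\int_\gamma\rho'\,ds\ge\int_{\psi\circ\gamma}\rho\,ds\ge 1$ for every such $\gamma$ by \eqref{eq:path:integral}, so $\rho'$ is admissible for $\Gamma\setminus\Gamma_0$. Using the subadditivity of the modulus and $\Mod\Gamma_0=0$, then $\rho_\psi^2\le KJ_\psi$, then the inequality $J_\psi\mathcal{H}^2_Y\le\psi^*\mathcal{H}^2_Z$ from the Lebesgue decomposition recalled in \Cref{sec:sobolev}, and finally the change of variables for the pullback measure of the homeomorphism $\psi$, we obtain
\begin{align*}
    \Mod\Gamma
    &\le \int_Y(\rho')^2\,d\mathcal{H}^2_Y
    \le K\int_Y(\rho\circ\psi)^2 J_\psi\,d\mathcal{H}^2_Y \\
    &\le K\int_Y(\rho\circ\psi)^2\,d(\psi^*\mathcal{H}^2_Z)
    = K\int_Z\rho^2\,d\mathcal{H}^2_Z.
\end{align*}
Taking the infimum over admissible $\rho$ yields (i) with the same constant $K$.

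\textbf{From (i) to (ii).} This is the content of \cite[Theorem 1.1]{Wil:12}; the only hypotheses needed there are that $\mathcal{H}^2_Y$ and $\mathcal{H}^2_Z$ be Borel-regular and locally finite, which holds by assumption, so I would deduce the implication directly from that reference. Its proof is the substantial part: one first produces the Sobolev regularity $\psi\in N^{1,2}_{\loc}(Y,Z)$ by building an $L^2_{\loc}$ weak upper gradient from the volume derivative $J_\psi$ (which lies in $L^1_{\loc}(Y)$ by the decomposition in \Cref{sec:sobolev}), and then upgrades the modulus inequality (i) to the pointwise distortion bound $\rho_\psi^2\le KJ_\psi$ by a delicate change-of-variables argument along $2$-almost every curve, the exceptional family of curves along which $\psi$ stretches by more than $(KJ_\psi)^{1/2}$ being shown to carry zero $2$-modulus.

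\textbf{Main obstacle.} The implication (ii) $\Rightarrow$ (i) is routine. All of the difficulty is in (i) $\Rightarrow$ (ii) — passing from the purely geometric modulus inequality to Sobolev regularity together with the sharp pointwise bound — and within it, the step controlling the stretching of $\psi$ along $2$-almost every curve by the volume derivative. This is exactly the technical heart of \cite[Theorem 1.1]{Wil:12}, which I would invoke rather than reprove.
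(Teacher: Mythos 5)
Your proposal is correct and rests on the same foundation as the paper: the paper gives no proof of its own and simply cites \cite[Theorem 1.1]{Wil:12} for the full equivalence, which is exactly where you send the reader for the hard implication (i)\,$\Rightarrow$\,(ii). Your additional self-contained proof of (ii)\,$\Rightarrow$\,(i) is sound (it correctly uses the absolute-continuity-on-almost-every-curve property of $N^{1,2}_{\loc}$ maps, the pointwise bound $\rho_\psi^2\le KJ_\psi$, the inequality $J_\psi\,\mathcal{H}^2_Y\le\psi^*\mathcal{H}^2_Z$ from the Lebesgue decomposition, and the change of variables for the pullback measure) but is redundant relative to the citation, since Williams' theorem already furnishes both directions.
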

    The \emph{outer dilatation} of $\psi$ is the smallest constant $K_{O} \geq 0$ for which the modulus inequality $\Mod \Gamma \leq K_{O} \Mod \psi \Gamma$ holds for all $\Gamma$ in $Y$. The \emph{inner dilatation} of $\psi$ is the smallest constant $K_{I} \geq 0$ for which $\Mod \psi \Gamma \leq K \Mod \Gamma$ holds for all $\Gamma$ in $Y$. The number $K( \psi ) = \max\left\{ K_{I}( \psi ), K_{O}( \psi ) \right\}$ is the \emph{maximal dilatation} of $\psi$.
    
    For a set $G \subset Y$ and disjoint sets $F_1, F_2 \subset G$, let $\Gamma(F_1,F_2; G)$ denote the family of paths that start from $F_{1}$, end in $F_{2}$ and whose images are contained in $G$. A \emph{quadrilateral} is a set $Q$ homeomorphic to $[0,1]^2$ with boundary $\partial Q$ consisting of four boundary arcs, overlapping only at the end points, labelled $\xi_1, \xi_2, \xi_3, \xi_4$ in cyclic order.
    
    \begin{defi}\label{defi:reciprocal}
    A metric surface $Y$ is \emph{reciprocal} if there exists a constant $\kappa  \geq 1 $ such that
    \begin{align}
    	\label{upper:bound}
    	\kappa^{-1}
    	\leq
    	\Mod \Gamma\left( \xi_{1},  \xi_{3};  Q \right)
    	\Mod \Gamma\left( \xi_{2},  \xi_{4};  Q \right)
    	\leq
    	\kappa
    \end{align}
    for every quadrilateral $Q \subset Y$, and 
    \begin{equation}
    	\label{point:zero:modulus}
    	\lim_{ r \rightarrow 0^{+} }
    	\Mod \Gamma\left( \overline{B}_{Y}( y,  r ),  Y \setminus B_{Y}( y,  R );  \overline{B}_{Y}( y,  R ) \right)
    	=
    	0
    \end{equation}
    for all $y \in Y$ and $R > 0$ such that $Y \setminus B_Y( y, R ) \neq \emptyset$.
    \end{defi}
    We note that the product in \eqref{upper:bound} is always bounded from below by a universal constant $\kappa_{0} >0$ \cite{RR:19}. We also have the following.

    \begin{prop}[Corollary 12.3 of \cite{Raj:17}]\label{prop:outer:to:maximal}
    Let $Y$ be a metric surface, $U \subset Y$ a domain, and $\psi \colon U \rightarrow \Omega \subset \mathbb{R}^{2}$ a homeomorphism. If $K_{O}( \psi ) < \infty$, then $K_{I}( \psi ) \leq \left( 2 \cdot \kappa_{0} \right)K_{O}( \psi ) < \infty$.
    \end{prop}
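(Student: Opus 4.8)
The plan is to pass to the analytic description of quasiconformality, to reduce the inner dilatation bound to a statement about $\psi^{-1}$, and then to run an infinitesimal modulus computation that exploits the conformal invariance of modulus in the plane together with the universal reciprocal lower bound of \cite{RR:19}. Since $K_{O}(\psi) < \infty$, \Cref{prop:williams:L-Wversion} gives $\psi \in N^{1,2}_{\loc}(U, \Omega)$ with $\rho_{\psi}^{2} \leq K_{O}(\psi) J_{\psi}$ $\mathcal{H}^{2}_{U}$-a.e. Substituting $\Delta = \psi\Gamma$ in the definitions shows $K_{I}(\psi) = K_{O}(\psi^{-1})$, so by \Cref{prop:williams:L-Wversion} applied to $\psi^{-1} \colon \Omega \to U$ it suffices to prove that $\psi^{-1} \in N^{1,2}_{\loc}(\Omega, U)$ and that $\rho_{\psi^{-1}}^{2} \leq 2\kappa_{0} K_{O}(\psi) J_{\psi^{-1}}$ $\mathcal{L}^{2}$-a.e.

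\emph{The engine.} For a quadrilateral $Q \subset U$ with sides $\xi_{1}, \dots, \xi_{4}$ in cyclic order write $M_{i} = \Mod\Gamma(\xi_{i}, \xi_{i+2}; Q)$ and $M_{i}' = \Mod\Gamma(\psi\xi_{i}, \psi\xi_{i+2}; \psi Q)$ (indices mod $4$). The image $\psi Q \subset \Omega$ is a planar quadrilateral, so $M_{1}' M_{2}' = 1$ by conformal invariance of modulus; the outer dilatation bound gives $M_{i} \leq K_{O}(\psi) M_{i}'$; and \cite{RR:19} gives $M_{1} M_{2} \geq \kappa_{0}$. Already these three facts force $\kappa_{0} \leq M_{1} M_{2} \leq K_{O}(\psi)^{2}$ together with a reverse inequality $M_{i}' \leq C K_{O}(\psi) M_{i}$ for a universal $C$; in particular $U$ is reciprocal, the point condition \eqref{point:zero:modulus} for $U$ following from continuity of $\psi$ and the corresponding planar fact. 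To get the sharp constant I would run this argument infinitesimally: since $\Omega$ is a planar domain, Kirchheim's theorem makes $\psi^{-1}$ metrically differentiable $\mathcal{L}^{2}$-a.e., with metric derivative a seminorm $s_{z}$ on $\mathbb{R}^{2}$ that is a norm wherever $J_{\psi^{-1}} > 0$; there $\rho_{\psi^{-1}}(z)$ is the operator norm of $s_{z}$ and $J_{\psi^{-1}}(z) = \pi / \mathcal{L}^{2}(B_{s_{z}})$. Taking quadrilaterals $Q$ shrinking to $x = \psi^{-1}(z)$ whose images $\psi Q$ are Euclidean rectangles adapted to the extremal directions of $s_{z}$, the limits of $M_{i}'$ and $M_{i}$ become explicit functionals of $s_{z}$, and inserting $M_{1}'M_{2}' = 1$, $M_{i} \leq K_{O}(\psi) M_{i}'$ and $M_{1}M_{2} \geq \kappa_{0}$ into these limits should yield the pointwise bound $\rho_{\psi^{-1}}^{2}(z) \leq 2\kappa_{0} K_{O}(\psi) J_{\psi^{-1}}(z)$.

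\emph{Main obstacle.} The crux is the regularity of $\psi^{-1}$: one must first establish that $\psi^{-1}$ has a locally square-integrable upper gradient at all, and that $\psi$ (equivalently $\psi^{-1}$) satisfies Lusin's condition $(N)$, so that $J_{\psi}$ and $J_{\psi^{-1}}$ genuinely compute area and $J_{\psi} = 1/(J_{\psi^{-1}} \circ \psi)$ holds $\mathcal{H}^{2}_{U}$-a.e.; this is where the local finiteness of $\mathcal{H}^{2}_{U}$ and the reciprocal structure enter, and it is the technical heart. The second delicate point is passing from the quadrilateral estimate to a pointwise a.e. estimate with the exact constant $2\kappa_{0}$, rather than some cruder universal multiple of $K_{O}(\psi)$: this forces the careful choice of the comparison rectangles aligned with the extremal directions of $s_{z}$ and an honest control of the error terms in the infinitesimal limit.
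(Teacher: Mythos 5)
This proposition is not proved in the paper at all: it is invoked verbatim as Corollary~12.3 of~\cite{Raj:17}, so there is no internal proof to compare against. You are therefore attempting to reprove a substantial result of Rajala's from scratch, and judged on its own terms the proposal is an outline with genuine gaps rather than a proof.

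Your strategy --- reduce $K_{I}(\psi)$ to $K_{O}(\psi^{-1})$, pass to the analytic characterization of \Cref{prop:williams:L-Wversion}, and feed in the planar identity $M_{1}'M_{2}' = 1$ together with the universal lower bound $M_{1}M_{2} \geq \kappa_{0}$ --- is in the right spirit and is indeed what drives Rajala's argument. But there are three concrete issues. First, the quadrilateral computation you gesture at actually produces $M_{1}' \leq (K_{O}(\psi)/\kappa_{0})\,M_{1}$: from $M_{2} \geq \kappa_{0}/M_{1}$ and $M_{2}' \geq M_{2}/K_{O}$ one gets $M_{1}' = 1/M_{2}' \leq K_{O}/M_{2} \leq K_{O}M_{1}/\kappa_{0}$. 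That is a different constant from the stated $2\kappa_{0}K_{O}$, and you give no mechanism by which the ``infinitesimal'' version with Kirchheim differentiability would improve it to the claimed sharp value; the phrase ``should yield the pointwise bound'' is not an argument. Second, even granting a uniform quadrilateral estimate $M_{1}' \leq C\,M_{1}$, passing from that to the inequality $\Mod \psi\Gamma \leq C\,\Mod\Gamma$ for \emph{all} path families --- equivalently to the pointwise bound $\rho_{\psi^{-1}}^{2} \leq C\,J_{\psi^{-1}}$ a.e. --- is precisely the nontrivial content of the quadrilateral-to-geometric implication in Rajala's theory, and you do not supply it. Third, and most seriously, the Sobolev regularity $\psi^{-1}\in N^{1,2}_{\loc}(\Omega, U)$ and Lusin's condition~$(N)$, together with the identity $J_{\psi} = 1/(J_{\psi^{-1}}\circ\psi)$ a.e., are not established; you explicitly flag this as ``the technical heart'' and leave it open. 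This is not a small omission: it is the main content of \cite[Section~12]{Raj:17}, and without it the analytic reduction via \Cref{prop:williams:L-Wversion} cannot even be set in motion. As submitted, the proposal is an honest roadmap toward the result but does not constitute a proof.
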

    Recall the definition of quasiconformal Jordan domain from the introduction.
    \begin{prop}\label{prop:pointwisemodulus}
    Let $Y$ be a quasiconformal Jordan domain and $\Psi \colon \overline{Y} \rightarrow \overline{ \mathbb{D} }$ a homeomorphism. Then $K_{O}( \Psi ) \leq K < \infty$ if and only if there exists a constant $C > 0$ such that
    \begin{equation}
        \label{eq:pointwise:modulus}
        \liminf_{ r \rightarrow 0^{+} }
        \Mod \Psi^{-1} \Gamma\left( \overline{B}_{ \overline{\mathbb{D}} }( x,  r ),  \overline{\mathbb{D}} \setminus B_{ \overline{\mathbb{D}} }( x,  2r );  \overline{B}_{ \overline{\mathbb{D}} }( x,  2r ) \right)
        \leq
        C
    \end{equation}
    for every $x \in \overline{ \mathbb{D} }$. The constants $K$ and $C$ depend on each other quantitatively. Moreover, $K_{I}( \Psi ) \leq ( 2 \cdot \kappa_{0} ) \cdot K$.
    \end{prop}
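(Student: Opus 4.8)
We first record that the \emph{moreover} clause follows from \Cref{prop:outer:to:maximal} once $K_{O}(\Psi)\le K$ is known: applying that corollary to the homeomorphism $\Psi|_{Y}\colon Y\to\mathbb{D}\subset\mathbb{R}^{2}$ of the metric surface $Y$ onto a planar domain, where $K_{O}(\Psi|_{Y})\le K$ because path families in $Y$ are among those in $\overline{Y}$ and their moduli are unchanged when admissible densities are taken to vanish on $\partial Y$, we get $K_{I}(\Psi|_{Y})\le 2\kappa_{0}K$, and hence $K_{I}(\Psi)\le 2\kappa_{0}K$ by the same remark about $\partial Y$. So it remains to prove the equivalence, with constants depending quantitatively on one another.

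For the forward implication, assume $K_{O}(\Psi)\le K$, i.e.\ $\Mod\Psi^{-1}\Gamma'\le K\Mod\Gamma'$ for every path family $\Gamma'$ in $\overline{\mathbb{D}}$. Fix $x\in\overline{\mathbb{D}}$ and $0<r<1$ and take $\Gamma'=\Gamma(\overline{B}_{\overline{\mathbb{D}}}(x,r),\overline{\mathbb{D}}\setminus B_{\overline{\mathbb{D}}}(x,2r);\overline{B}_{\overline{\mathbb{D}}}(x,2r))$. Every member of $\Gamma'$ contains a subpath lying in the round ring $B_{\mathbb{R}^{2}}(x,2r)\setminus\overline{B}_{\mathbb{R}^{2}}(x,r)$ and joining its two boundary circles, so monotonicity of modulus together with the classical value of the modulus of a round ring gives $\Mod\Gamma'\le 2\pi/\log2$, uniformly in $x$ (restricting the ambient space to $\overline{\mathbb{D}}$ only removes paths, so the bound persists for $x\in\mathbb{S}^{1}$). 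Hence $\Mod\Psi^{-1}\Gamma'\le 2\pi K/\log2$ for all admissible $r$, and \eqref{eq:pointwise:modulus} holds with $C=2\pi K/\log2$.

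For the converse, by \Cref{prop:williams:L-Wversion} (applied to $\Psi$, with constant $K$) it suffices to establish, for some $K=K(C)$, that $\Psi\in N^{1,2}_{\loc}(\overline{Y},\overline{\mathbb{D}})$ and that $\rho_{\Psi}^{2}\le KJ_{\Psi}$ holds $\mathcal{H}^{2}_{\overline{Y}}$-a.e.; then $K_{O}(\Psi)\le K$. I would do this in two stages. \emph{(a) Sobolev membership.} Fix a point and a target ball $\overline{B}_{\overline{\mathbb{D}}}(x_{0},\rho_{0})$; for each $x$ in it use the $\liminf$ hypothesis to pick an arbitrarily small radius $r(x)$ with $\Mod\Psi^{-1}\Gamma'(x,r(x))<2C$, together with an admissible density $\eta_{x}\in L^{2}(\overline{Y})$, $\int\eta_{x}^{2}\le 2C$, for that family. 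Selecting from these balls a Besicovitch-type subcover \emph{stratified by dyadic scale}, so that within each scale the doubled balls $\overline{B}_{\overline{\mathbb{D}}}(x,2r(x))$ have bounded overlap (a planar packing bound, the radii being comparable), and summing the associated densities over scales with weights comparable to $2^{-k}$, one builds a weak upper gradient of $\Psi$ on $\Psi^{-1}(\overline{B}_{\overline{\mathbb{D}}}(x_{0},\rho_{0}/2))$ with $L^{2}$-norm controlled by $C$; the coarea estimate \Cref{thm:eilenberg} on $\overline{Y}$ is used to bookkeep the lengths of paths crossing the doubled balls. Since $\overline{\mathbb{D}}$ is bounded and $\mathcal{H}^{2}_{\overline{Y}}(\overline{Y})<\infty$, this shows $\Psi\in N^{1,2}_{\loc}(\overline{Y},\overline{\mathbb{D}})$ (and, using $\Psi^{*}\mathcal{H}^{2}_{\overline{\mathbb{D}}}=J_{\Psi}\mathcal{H}^{2}_{\overline{Y}}+\mu^{\perp}$, that $J_{\Psi}\in L^{1}(\overline{Y})$). \emph{(b) Jacobian bound.} With $\Psi$ now Sobolev, blow up at an $\mathcal{H}^{2}_{\overline{Y}}$-a.e.\ point $y_{0}$, a point at which the rectifiable structure of $\overline{Y}$ and the Sobolev map $\Psi$ are differentiable in the sense available for maps on metric surfaces; there $\rho_{\Psi}(y_{0})$ and $J_{\Psi}(y_{0})$ are the operator norm and the area distortion of the metric differential, and a linearized form of the $\liminf$ hypothesis at $x_{0}=\Psi(y_{0})$ — comparing the universal round-ring modulus $2\pi/\log2$ on the target with the pulled-back modulus $\le C$ — bounds the eccentricity of the differential, giving $\rho_{\Psi}(y_{0})^{2}\le KJ_{\Psi}(y_{0})$ with $K=K(C)$. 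Combining (a) and (b) finishes the converse.

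The main obstacle is the covering argument in (a): the hypothesis controls $\Mod\Psi^{-1}\Gamma'(x,r)$ only along a $\liminf$-sparse set of radii at each point and provides no uniform overlap control for the doubled balls, so one is forced to run the Vitali/Besicovitch selection on the planar target $\overline{\mathbb{D}}$, stratify it by scale to recover bounded overlap, and convert the abstract admissible densities $\eta_{x}$ into genuine $L^{2}$ bounds — all while keeping the final constant dependent on $C$ alone. The blow-up in (b), and the reduction to the interiors needed for the \emph{moreover} clause, are routine by comparison.
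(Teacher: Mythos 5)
Your forward implication is fine and in fact more elementary than the paper's; the paper handles both directions simultaneously by quoting Theorem~1.2 of \cite{Wil:12}, which is precisely the statement that (since $\mathcal{L}^{2}$ is doubling on $\overline{\mathbb{D}}$) $K_{O}(\Psi)\le K$ is quantitatively equivalent to a pointwise $\liminf$ bound on $r^{2}\Mod\Psi^{-1}\Gamma(\ldots)/\mathcal{L}^{2}(\overline{B}(x,r))$, which is comparable to \eqref{eq:pointwise:modulus}. The serious issue is your converse: what you sketch in stages (a)--(b) is essentially an attempted re-proof of that cited theorem, and you yourself flag the covering step as an unresolved obstacle (the hypothesis gives $\liminf$ control at a sparse set of radii, and you describe only a plan --- stratified Besicovitch, weighted sums of admissible densities, metric differentiability --- without executing it). As written, the converse is a program, not a proof, and it solves the wrong problem: the correct move is to reduce to the Lebesgue density form and invoke the existing theorem.

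The \emph{moreover} clause also has a gap. You say $K_{I}(\Psi|_{Y})\le 2\kappa_{0}K$ upgrades to $K_{I}(\Psi)\le 2\kappa_{0}K$ ``by the same remark about $\partial Y$.'' But that remark (that a path family entirely inside $Y$ has the same modulus computed in $Y$ or in $\overline{Y}$) controls $K_{O}(\Psi|_{Y})\le K_{O}(\Psi)$ and does nothing for $K_{I}$: the inner dilatation of $\Psi$ quantifies over \emph{all} path families in $\overline{Y}$, including families of paths that meet $\partial Y$, and for those no restriction-to-$Y$ argument is available. Equivalently, $K_{I}(\Psi)=K_{O}(\Psi^{-1})$ where $\Psi^{-1}\colon\overline{\mathbb{D}}\to\overline{Y}$, and you must control path families in $\overline{\mathbb{D}}$ that touch $\mathbb{S}^{1}$; the fact that $\mathcal{L}^{2}(\mathbb{S}^{1})=0$ does not make those families modulus-null. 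The paper's actual argument here is substantive: it uses \Cref{prop:williams:L-Wversion} together with integrability of $J_{\phi}$ to show $\phi=\Psi^{-1}|_{\mathbb{D}}\in N^{1,2}(\mathbb{D};Y)$, extends $\Phi=\Psi^{-1}$ across $\mathbb{S}^{1}$ by reflection and the Korevaar--Schoen theorem to get $\Phi\in N^{1,2}(\overline{\mathbb{D}};\overline{Y})$, chooses a representative of $\rho_{\Phi}$ vanishing on $\mathbb{S}^{1}$, and only then applies condition~(ii) of \Cref{prop:williams:L-Wversion} to conclude $K_{O}(\Phi)\le 2\kappa_{0}K$. That extension step is exactly what your ``same remark'' cannot replace.
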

    \begin{proof}

    Since the Lebesgue $2$-measure on $\overline{\mathbb{D}}$ is doubling, Theorem 1.2 of \cite{Wil:12} states that an upper bound $K_{O}( \Psi ) \leq K$ is quantitatively equivalent to the following statement: there exists $C' \geq 1$ such that for every $x \in \overline{ \mathbb{D} }$,
    \begin{equation*}
        \liminf_{ r \rightarrow 0^{+} }
        \frac{
            r^{2}
            \Mod
            \Psi^{-1} \Gamma\left( \overline{B}_{ \overline{\mathbb{D}} }( x,  r ),  \overline{\mathbb{D}} \setminus B_{ \overline{\mathbb{D}} }( x,  2r );  \overline{B}_{ \overline{\mathbb{D}} }( x,  2r ) \right)
        }{
            \mathcal{L}^{2}( \overline{B}_{ \overline{\mathbb{D}} }( x, r ) )
        }
        \leq
        C'.
    \end{equation*}
    Since $\mathcal{L}^{2}( B_{ \overline{\mathbb{D}} }( x, r ) )$ is comparable to $r^{2}$, $K_{O}( \Psi ) \leq K$ if and only if \eqref{eq:pointwise:modulus} holds for some $C$, with $K$ and $C$ depending on one another quantitatively.
    
    It remains to prove that $K_{I}( \Psi ) \leq C_{0} K$. To this end, consider $\psi = \Psi|_{Y}$ and $\phi = \psi^{-1}$. \Cref{prop:outer:to:maximal} implies that $K_{I}( \psi ) = K_{O}( \phi ) \leq C_{0}K$. Then \Cref{prop:williams:L-Wversion} implies that $\phi \in N^{1, 2}( \mathbb{D}; Y )$ since the Jacobian $J_{\phi}$ of $\phi$ is integrable. Observe that the extension $\Phi = \Psi^{-1}$ of $\phi$ is an element of $N^{1, 2}( \overline{ \mathbb{D} }; \overline{Y} )$. This can be seen by extending $\Phi$ to a neighbourhood of $\overline{ \mathbb{D} }$ via reflection over $\mathbb{S}^{1}$ and by applying \cite[Theorem 1.12.3]{Kor:Sch:93}. The minimal weak upper gradient of $\Phi$ has a representative that vanishes in $\mathbb{S}^{1}$ since $\mathcal{L}^{2}( \mathbb{S}^{1} ) = 0$. Therefore $\rho_{ \Phi }^{2} \leq C_{0}K J_{ \Phi }$ holds $\mathcal{L}^{2}$-almost everywhere in $\overline{ \mathbb{D} }$. This implies that $\Phi$ satisfies the second condition in \Cref{prop:williams:L-Wversion} with the constant $C_{0}K$.
    \end{proof}

    We recall a sufficient condition for \eqref{eq:pointshavezeromod} for later use.
    \begin{lemm}\label{lemm:points}
    Suppose that there exists $C_{U} > 0$ such that for all $y \in \partial Y$ and $0 < r < \diam \partial Y$,
    \begin{equation}
        \label{eq:area:growth}
        \mathcal{H}^{2}_{ \overline{Y} }( \overline{B}_{ \overline{Y} }( y, r ) )
        \leq
        C_{U} r^{2}.
    \end{equation}
    Then for $\widetilde{C}_{U} = 8C_{U} / \log 2$ and for every $y \in \partial Y$ and $0 < 2 r < R < 2^{-1}\diam \partial Y$,
    \begin{equation}
        \label{eq:pointshavezeromod:quantitative}
        \Mod
        \Gamma( 
            \overline{B}_{ \overline{Y} }( y, r ),
            \overline{Y} \setminus B_{ \overline{Y} }( y, R );
            \overline{Y}
        )
        \leq
        \frac{ \widetilde{C}_{U} }{ \log \frac{R}{r} }.
    \end{equation}
    In particular, \eqref{eq:pointshavezeromod} in \Cref{prop:QS} holds under the assumption \eqref{eq:area:growth}.
    \end{lemm}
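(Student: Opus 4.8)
The plan is to establish the quantitative estimate \eqref{eq:pointshavezeromod:quantitative} by exhibiting a single explicit admissible function for the path family, and then to deduce \eqref{eq:pointshavezeromod} from it by monotonicity of the modulus. So I would fix $y \in \partial Y$ and $0 < 2r < R < 2^{-1}\diam \partial Y$, abbreviate $u = d(\cdot,y)$ (a $1$-Lipschitz function on $\overline Y$) and $L = \log(R/r) > \log 2$, and take as test function
\[
    \rho = \frac{1}{L}\cdot\frac{1}{u}\cdot\chi_{\{ r < u < R \}}.
\]
The proof then splits into two checks: that $\rho$ is admissible for $\Gamma := \Gamma(\overline{B}_{\overline{Y}}(y,r), \overline{Y}\setminus B_{\overline{Y}}(y,R); \overline{Y})$, and that $\int_{\overline{Y}} \rho^2 \, d\mathcal{H}^2_{\overline{Y}} \le \widetilde{C}_U / L$.

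For admissibility, take a rectifiable $\gamma \colon [0,1] \to \overline{Y}$ in $\Gamma$ and extract its first crossing of the annulus: with $a = \sup\{ t : u(\gamma(t)) \le r \}$ and $b = \inf\{ t > a : u(\gamma(t)) \ge R \}$, continuity together with the endpoint conditions gives $a < b$, $u(\gamma(a)) = r$, $u(\gamma(b)) = R$, and $r < u(\gamma(t)) < R$ for all $t \in (a, b)$. Reparametrizing $\gamma|_{[a,b]}$ by arc length yields a $1$-Lipschitz path $\beta \colon [0,\ell] \to \overline{Y}$ along which $g := u \circ \beta$ is $1$-Lipschitz with $g(0) = r$, $g(\ell) = R$ and $r < g < R$ on $(0,\ell)$; in particular $\log \circ g$ is absolutely continuous, so
\[
    L = \log \frac{R}{r} = \int_{0}^{\ell} \frac{g'}{g} \, d\mathcal{L}^1 \le \int_{0}^{\ell} \frac{|g'|}{g} \, d\mathcal{L}^1 \le \int_{0}^{\ell} \frac{1}{g} \, d\mathcal{L}^1 = L \int_{\beta} \rho \, ds,
\]
the last equality because the metric speed of $\beta$ is $1$ almost everywhere and $g$ avoids $\{r,R\}$ outside the endpoints. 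Since $\beta$ is a subpath of $\gamma$ and $\rho \ge 0$, this gives $\int_{\gamma} \rho \, ds \ge \int_{\beta} \rho \, ds \ge 1$.

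For the integral bound, set $k = \lceil \log_2(R/r) \rceil$ and cover $\{ r < u < R \}$ by the dyadic shells $A_j = \{ 2^j r < u \le 2^{j+1} r \}$, $j = 0, \dots, k-1$. On $A_j$ one has $u^{-2} < (2^j r)^{-2}$, while $\mathcal{H}^2_{\overline{Y}}(A_j) \le \mathcal{H}^2_{\overline{Y}}(\overline{B}_{\overline{Y}}(y, 2^{j+1} r)) \le C_U (2^{j+1} r)^2$ by \eqref{eq:area:growth}, which applies since $2^{j+1} r \le 2^k r < 2R < \diam \partial Y$. Hence each shell contributes at most $4 C_U$ to $\int u^{-2} \, d\mathcal{H}^2_{\overline{Y}}$, so $\int_{\overline{Y}} \rho^2 \, d\mathcal{H}^2_{\overline{Y}} \le 4 C_U k / L^2$; and since $k < \log_2(R/r) + 1 = L/\log 2 + 1 \le 2L/\log 2$ (using $L \ge \log 2$), this is at most $8 C_U/(L \log 2) = \widetilde{C}_U / L$, which is \eqref{eq:pointshavezeromod:quantitative}. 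The final assertion \eqref{eq:pointshavezeromod} follows from this: given $y \in \partial Y$ and $R > 0$ with $\overline{Y}\setminus B_{\overline{Y}}(y,R) \neq \emptyset$, set $R' = \min\{ R, 4^{-1}\diam \partial Y \} > 0$; since $\overline{Y} \setminus B_{\overline{Y}}(y,R) \subset \overline{Y}\setminus B_{\overline{Y}}(y,R')$ and $R' < 2^{-1}\diam \partial Y$, for all small $r$ the modulus of $\Gamma(\overline{B}_{\overline{Y}}(y,r), \overline{Y}\setminus B_{\overline{Y}}(y,R); \overline{Y})$ is at most that of $\Gamma(\overline{B}_{\overline{Y}}(y,r), \overline{Y}\setminus B_{\overline{Y}}(y,R'); \overline{Y})$, hence at most $\widetilde{C}_U/\log(R'/r) \to 0$ as $r \to 0^+$.

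The one genuinely delicate point I anticipate is the extraction of the crossing subpath $\beta$: it is precisely what guarantees that $g = u \circ \beta$ is $1$-Lipschitz and stays bounded away from $0$, which is exactly what legitimizes the absolutely continuous chain rule $\int_0^\ell g'/g = \log(g(\ell)/g(0))$ underlying the admissibility bound. Once that is in place, the rest reduces to the definition of the modulus together with the area upper bound \eqref{eq:area:growth} and a dyadic bookkeeping of constants.
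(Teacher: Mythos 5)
Your proof is correct, and it establishes exactly the bound in the paper with the same test function $\rho = L^{-1}u^{-1}\chi_{\{r < u < R\}}$ and the same dyadic‑shell estimate for $\|\rho\|_{L^2}^2$ (your bookkeeping $k = \lceil\log_2(R/r)\rceil < 2L/\log 2$ is equivalent to the paper's $k+1 \le (2/\log 2)\log(R/r)$, yielding the same constant $\widetilde C_U = 8C_U/\log 2$). The one place where you depart from the paper is the admissibility check. The paper proves $\int_\gamma \rho\,ds \ge 1$ via the coarea inequality \Cref{thm:eilenberg} applied to $f = d(y,\cdot)$ together with the identity \eqref{eq:areaformula}, implicitly using that each level set $f^{-1}(t)$, $t\in[r,R]$, meets the image of $\gamma$. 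You instead extract the first crossing $\beta$ of the annulus, parametrize it by arclength so that $g = u\circ\beta$ is $1$-Lipschitz with $g(0)=r$, $g(\ell)=R$, and $r<g<R$ on $(0,\ell)$, and then invoke the absolutely continuous chain rule for $\log\circ g$. This is the more elementary and classical route for showing $1/(u\log(R/r))$ is admissible on annuli; it avoids the Eilenberg-type inequality entirely. The paper's route is shorter given that \Cref{thm:eilenberg} is already part of its toolbox and gets reused elsewhere (e.g.\ in \Cref{lemm:positivemodulus} and \Cref{lemm:Ahlfors:lowerbound}), while yours is self‑contained and makes the mechanism behind admissibility completely explicit. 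The final deduction of \eqref{eq:pointshavezeromod} by shrinking the outer radius and using monotonicity of modulus is the same as the paper's, up to exchanging the roles of $R$ and $R'$.
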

    
    \begin{proof}

    The inequality \eqref{eq:pointshavezeromod:quantitative} follows from \eqref{eq:area:growth} by considering the admissible function $\rho(x) = \frac{ 1 }{ \log \frac{R}{r} } \frac{ 1 }{ d( y, x ) } \chi_{ \left\{ r \leq d(y,x) \leq R \right\} }$. We claim that $\rho$ is admissible for the family $\Gamma( \overline{B}_{ \overline{Y} }( y, r ), \overline{Y} \setminus B_{ \overline{Y} }( y, R ); \overline{Y} )$. To this end, fix a rectifiable $\gamma \in \Gamma( \overline{B}_{ \overline{Y} }( y, r ), \overline{Y} \setminus B_{ \overline{Y} }( y, R ); \overline{Y} )$.
    
    We denote $f(x) = d(y,x)$. Whenever $x \in f^{-1}(t)$, we have $\rho( x ) {\#}( \gamma^{-1}(x) ) \geq \frac{ 1 }{ \log \frac{R}{r} } \frac{ 1 }{ t } \chi_{ \left\{ r \leq t \leq R \right\} }$. Therefore
    \begin{equation*}
        \int_{ \mathbb{R} }^{*}
        \int_{ f^{-1}(t) }
            \rho(x) {\#}( \gamma^{-1}(x) )
        \,d\mathcal{H}^{0}_{ \overline{Y} }(x)
        \,d\mathcal{L}^{1}(t)
        \geq
        \int_{ r }^{ R }
            \frac{ 1 }{ \log \frac{R}{r} }
            \frac{ 1 }{ t }
        \,d\mathcal{L}^{1}(t)
        =
        1.
    \end{equation*}
    Then \Cref{thm:eilenberg} implies
    \begin{equation*}
        \int_{ \overline{Y} }
            \rho( x )
            \#( \gamma^{-1}(x) )
        \,d\mathcal{H}^{1}_{\overline{Y}}(x)
        \geq
        \int_{ \mathbb{R} }^{*}
        \int_{ f^{-1}(t) }
            \rho(x) {\#}( \gamma^{-1}(x) )
        \,d\mathcal{H}^{0}_{ \overline{Y} }(x)
        \,d\mathcal{L}^{1}(t).
    \end{equation*}
    The equality \eqref{eq:areaformula} yields $\int_{ \gamma } \rho \,ds
        =
        \int_{ \overline{Y} }
            \rho( x )
            \#( \gamma^{-1}(x) )
        \,d\mathcal{H}^{1}_{\overline{Y}}(x)$. Hence $\rho$ is admissible for $\Gamma( \overline{B}_{ \overline{Y} }( y, r ), \overline{Y} \setminus B_{ \overline{Y} }( y, R ); \overline{Y} )$.
    
    The $L^{2}$-norm of $\rho$ is estimated from above by applying the area growth \eqref{eq:area:growth} on the annuli $A_{l} = \left\{ 2^{l} r \leq d( y, x ) < 2^{l+1} r \right\}$ for $l = 0,1, 2, \dots, k$ for $2^{k} r < R \leq 2^{k+1} r$, $k \in \mathbb{N}$. That is,
    \begin{align*}
        \int_{ \overline{Y} }
            \rho^{2}(x)
        \,d\mathcal{H}^{2}_{ \overline{Y} }(x)
        &\leq
        \sum_{ l = 0 }^{ k }
        \int_{ A_{l} }
            \rho^{2}(x)
        \,d\mathcal{H}^{2}_{ \overline{Y} }(x)
        \leq
        \frac{ 1 }{ \log^{2}( \frac{R}{r} ) }
        \sum_{ l = 0 }^{ k }
            \frac{  \mathcal{H}^{2}_{ \overline{Y} }( \overline{B}( y, 2^{l+1}r ) ) }
            { 2^{ 2l  }r^{2} }
        \\
        &\leq
        \frac{ 1 }{ \log^{2}( \frac{R}{r} ) }
        \sum_{ l = 0 }^{ k }
            \frac{  C_{U} 2^{ 2l +2 } r^{2} }
            { 2^{ 2l } r^{2} }
        =
        4C_{U}
        \frac{ k+1 }{ \log^{2} \frac{ R }{ r }  }
        \leq
        \frac{ 8C_{U} / \log 2 }{ \log \frac{R}{r} }
    \end{align*}
    since $k+1 \leq ( 2 / \log 2 ) \log \frac{R}{r}$. The inequality \eqref{eq:pointshavezeromod:quantitative} follows.
    
    We claim now that \eqref{eq:pointshavezeromod} in \Cref{prop:QS} holds. Let $y \in \partial Y$ and $R' > R > 0$ such that $\overline{Y} \setminus B_{ \overline{Y} }( y, R' ) \neq \emptyset$ and $2^{-1} \diam \partial Y > R$. Then for every $0 < 2 r < R$, every path in $\Gamma( \overline{B}_{ \overline{Y} }( y, r ), \overline{Y} \setminus B_{ \overline{Y} }( y, R' ); \overline{Y} )$ has a subpath in $\Gamma( \overline{B}_{ \overline{Y} }( y, r ), \overline{Y} \setminus B_{ \overline{Y} }( y, R ); \overline{Y} )$. Hence
    \begin{align*}
        \Mod
        \Gamma( 
            \overline{B}_{ \overline{Y} }( y, r ),
            \overline{Y} \setminus B_{ \overline{Y} }( y, R' );
            \overline{Y}
        )
        \leq
        \Mod
        \Gamma( 
            \overline{B}_{ \overline{Y} }( y, r ),
            \overline{Y} \setminus B_{ \overline{Y} }( y, R );
            \overline{Y}
        ).
    \end{align*}
    The right-hand side converges to zero as $r \rightarrow 0^{+}$, given \eqref{eq:pointshavezeromod:quantitative}. This establishes \eqref{eq:pointshavezeromod}.
    
    \end{proof}

\subsection{Quasicircles}\label{sec:quasicircle}
In this section we recall some basic properties of quasisymmetries and quasicircles. If $g \colon ( Y, d_{Y} ) \rightarrow ( Z, d_{Z} )$, we denote
\begin{align*}
    \label{eq:L}
    L_{g}( y, r )
    &=
    \sup_{ w \in \overline{B}_{ Y }( y, r ) }
    d_{Z}( g(y), g(w) )
    \quad \text{and} \quad
    \ell_{g}( y, r )
    =
    \inf_{ w \in Y \setminus B_{Y}( y, r ) }
    d_{Z}( g(y), g(w) ).
\end{align*}
\begin{defi}\label{def:quasisymmetry}
Let $\eta \colon \left[0, \infty\right) \rightarrow \left[0, \infty\right)$ be a homeomorphism. A homeomorphism $g \colon ( Y, d_{Y} ) \rightarrow ( Z, d_{Z} )$ between metric spaces is \emph{$\eta$-quasisymmetric} if for every $y \in Y$ and $0 < r_{1}, r_{2} < \diam Y$,
\begin{equation}
    \label{eq:distortion}
    L_{g}( y, r_{1} )
    \leq
    \eta\left( \frac{ r_{1} }{ r_{2} } \right)
    \ell_{g}( y, r_{2} ).
\end{equation}
A homeomorphism $g$ is \emph{quasisymmetric} if it is $\eta$-quasisymmetric for some homeomorphism $\eta \colon \left[0, \infty\right) \rightarrow \left[0, \infty\right)$.
\end{defi}
A set $S \subset Y$ is \emph{$r$-separated} if for every $x, y \in S$ with $x \neq y$, $d_{Y}( x, y ) \geq r$, and an \emph{$r$-net} if for every $y \in Y$, there exists $x \in S$ for which $d_{Y}( x, y ) < r$. An $r$-separated set is \emph{maximal} if it is also an $r$-net.

\begin{defi}\label{def:doubling}
A metric space $( Y, d_{Y} )$ has its \emph{Assouad dimension} bounded from above by $Q > 0$ if for every $0 < \epsilon < 1$ and every $( y, r ) \in Y \times ( 0, \diam Y )$, any $\epsilon r$-separated set $S \subset B_{Y}( y, r )$ satisfies
\begin{equation}
    \label{eq:cardinality}
    {\#}S
    \leq
    C \epsilon^{-Q},
\end{equation}
where $C$ is a constant independent of $\epsilon$, $y$, $r$ and $S$. Here ${\#}S$ refers to the counting measure of $S$. The \emph{Assouad dimension} of $Y$ is the infimum of such $Q$.
\end{defi}
A metric space $( Y, d_{Y} )$ is said to be \emph{doubling} if its Assouad dimension is finite.

\begin{defi}\label{def:boundedturning}
Let $\lambda \geq 1$. A metric space $( Y, d_{Y} )$ has \emph{$\lambda$-bounded turning} if for every $y, z \in Y$ there exists a compact and connected set $E \subset Y$ containing $y$ and $z$ such that $\diam E \leq \lambda d_{Y}( y, z )$.
\end{defi}

We recall that a metric space $\mathcal{C}$ homeomorphic to $\mathbb{S}^{1}$ is a quasisymmetric image of $\mathbb{S}^{1}$ if and only if $\mathcal{C}$ has bounded turning and is doubling \cite{Tuk:Vai:80}. We refer to any quasisymmetric image of $\mathbb{S}^{1}$ as a \emph{quasicircle}.

\section{Carathéodory's theorem}\label{sec:car}

\subsection{Proof of \Cref{thm:carat:metric}}
We fix a quasiconformal homeomorphism $\phi \colon \mathbb{D} \rightarrow Y$ and claim that it has a monotone and surjective extension $\Phi \colon \overline{ \mathbb{D} } \rightarrow \overline{Y}$. We are assuming that $\overline{Y}$ is homeomorphic to $\left[0, 1\right]^{2}$ and has finite Hausdorff $2$-measure.

Fix $x_{0} \in \mathbb{S}^{1}$. For each $0 < r < 2^{-1}$, denote $E_{r} = \mathbb{S}^{1}( x_{0}, r ) \cap \overline{ \mathbb{D} }$. Let $V_{r}$ be the component of $\mathbb{D} \setminus E_r$ whose closure contains $\left\{ x_{0} \right\}$.

Let $U_{r} = \overline{ \phi( V_{r} ) }$. Since $V_{r}$ is connected, so is $U_{r}$. Moreover, as $V_{r'} \subset V_{r}$ whenever $r' < r$, we have $U_{r'} \subset U_{r}$. Therefore
\begin{equation}
    \label{eq:intersection:end}
    \emptyset \neq \widetilde{C} = \bigcap_{ 0 < r < 2^{-1} } U_{r}
    \quad\text{is compact and connected \cite[Theorem 28.2]{Wil:70}}.
\end{equation}
Notice that $\widetilde{C} \subset \partial Y$.

\Cref{lemm:intersection:point} implies that $\widetilde{C}$ is a singleton. Let $y_{0}$ denote the unique element. We define $\Phi( x_{0} ) \coloneqq y_{0}$. We repeat the argument for every $x_{0} \in \mathbb{S}^{1}$. By setting $\Phi(x) = \phi(x)$ for every $x \in \mathbb{D}$, we obtain a mapping
\begin{equation*}
    \Phi \colon \overline{ \mathbb{D} } \rightarrow \overline{Y}.
\end{equation*}
We prove in \Cref{lemm:continuous} that $\Phi$ is continuous and surjective. \Cref{lemm:surjectivemonotone} shows the monotonicity of $\Phi$. Hence \Cref{thm:carat:metric} follows after we verify these lemmas.

\begin{lemm}\label{lemm:intersection:point}
Let $d_{r}$ denote the diameter of $U_{r}$. Then $d_{r} \rightarrow \diam \widetilde{C} = 0$ for every $x_{0} \in \mathbb{S}^{1}$.
\end{lemm}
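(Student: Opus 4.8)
The plan is to estimate the diameter $d_r$ of $U_r = \overline{\phi(V_r)}$ using the modulus of a suitable path family and the Sobolev/quasiconformality properties of $\phi$. The geometric picture is that $V_r$ is bounded in $\overline{\mathbb D}$ by the crosscut $E_r = \mathbb S^1(x_0,r)\cap\overline{\mathbb D}$; as $r$ ranges over an interval $(s, 2s)$ we get a nested family of such crosscuts, and a Fubini/length-area argument should show that for most radii $r$ the $\phi$-image of $E_r$ is a rectifiable curve of small length, which bounds $d_r$. Concretely, I would first record that $\widetilde C$ is a continuum contained in $\partial Y$ with $d_r \downarrow \operatorname{diam}\widetilde C$, so it suffices to prove $\operatorname{diam}\widetilde C = 0$, equivalently $d_r \to 0$.

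First I would set up the length–area estimate. Fix a small $s>0$ and consider the annular region $A_s = \{x\in\overline{\mathbb D} : s < |x - x_0| < 2s\}$; the crosscuts $E_r$ for $r\in(s,2s)$ foliate (a part of) this region. Let $\rho_\phi$ be the minimal weak upper gradient of $\phi$, which lies in $L^2_{\mathrm{loc}}$ since $\phi$ is quasiconformal (\Cref{prop:williams:L-Wversion}). For almost every $r\in(s,2s)$, the restriction $\phi|_{E_r}$ is absolutely continuous with $\ell(\phi(E_r)) \le \int_{E_r}\rho_\phi\,d\mathcal H^1$, and hence $d_r \le \operatorname{diam}\phi(E_r) \le \int_{E_r}\rho_\phi\,d\mathcal H^1$ — here one uses that $\overline{\phi(V_r)}\setminus\phi(V_r)\subset\partial Y$ and $E_r$ separates $x_0$ from the rest of $\overline{\mathbb D}$, so $\phi(E_r)$ together with its endpoints on $\partial Y$ bounds $U_r$; care is needed to relate $\operatorname{diam} U_r$ to $\operatorname{diam}\phi(E_r)$, but since $U_r$ is the closure of the image of the region cut off by $E_r$, any two points of $U_r$ are limits of points joinable inside $V_{r'}$ for $r'<r$, and one pushes the bound through. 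Then by Cauchy–Schwarz and the coarea inequality \eqref{eq:eilenberg} applied to the $1$-Lipschitz function $x\mapsto |x-x_0|$ (transported to $\overline Y$ via $\phi^{-1}$, or rather applied on $\overline{\mathbb D}$ with the pulled-back metric),
\[
\int_s^{2s} d_r^2\,dr
\le
\int_s^{2s}\Big(\int_{E_r}\rho_\phi\,d\mathcal H^1\Big)^2 dr
\lesssim
s\int_{A_s}\rho_\phi^2\,d\mathcal H^1\!\text{-type quantity}
\lesssim
s\,\mathcal H^2_{\overline Y}\big(\phi(A_s)\big),
\]
using $\rho_\phi^2 \le K J_\phi$ and that $\int J_\phi = \mathcal H^2_{\overline Y}(\phi(A_s))$. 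Since $d_r$ is monotone in $r$, $\inf_{r\in(s,2s)} d_r^2 \le s^{-1}\int_s^{2s}d_r^2\,dr \lesssim \mathcal H^2_{\overline Y}(\phi(A_s))$, and the right side tends to $0$ as $s\to 0$ because $\mathcal H^2_{\overline Y}(\overline Y)<\infty$ and the sets $\phi(A_s)$ shrink to a subset of $\partial Y$ (of measure zero, being the image of a null set, or at least a decreasing intersection). Monotonicity of $r\mapsto d_r$ then upgrades this to $d_r\to 0$.

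The main obstacle, I expect, is making the length–area step rigorous in the metric-surface setting: $\phi$ is only quasiconformal in the geometric sense, so I cannot differentiate it classically, and I must argue that for a.e.\ crosscut $E_r$ the curve $\phi|_{E_r}$ is rectifiable with the upper-gradient length bound — this requires that the family of crosscuts $\{E_r\}$ is "thick" enough that a.e.\ path in it is a.e.-path in the modulus sense, i.e.\ that exceptional path families of zero modulus meet only a null set of parameters $r$, which is exactly the content of a Fubini-type argument combined with \Cref{thm:eilenberg}. A secondary subtlety is the passage from $\operatorname{diam}\phi(E_r)$ to $d_r = \operatorname{diam} U_r$: one must check $U_r$ is contained in a bounded neighborhood of $\phi(E_r)$, which follows since every point of $U_r$ is a limit of points in $\phi(V_{r'})$, $r'<r$, and such points can be joined to $\phi(E_r)$ by images of radial-type paths staying in $V_r$; I would phrase this via connectedness of $V_{r'}$ rather than explicit paths. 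Once these two points are handled, monotonicity of $d_r$ closes the argument.
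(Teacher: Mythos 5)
Your length--area setup (polar coordinates, $g_0 = g\chi_{\mathbb D}$ integrable on a.e.\ crosscut $E_r$, absolute continuity of $\phi|_{E_r}$) is essentially the same first half as the paper's proof, which also uses \eqref{eq:integration} and \Cref{lemm:ACpath} to find radii $r$ along which $F_r := \overline{\phi(E_r\cap\mathbb D)}$ is a short rectifiable curve. But the step you flag as a ``secondary subtlety'' --- passing from $\operatorname{diam}\phi(E_r)$ to $d_r = \operatorname{diam} U_r$ --- is not a subtlety, it is a genuine gap, and no version of ``push the bound through via connectedness'' will fix it. The set $U_r$ consists of $\phi(V_r)$ together with an arc $U_r\cap\partial Y$ of the boundary whose two endpoints are the endpoints of $F_r$. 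A small $\operatorname{diam} F_r$ forces the \emph{endpoints} of that boundary arc to be close to one another, but it places no bound on the diameter of the arc itself: the arc can wander far into $\overline{Y}$ and come back. Indeed, if the lemma were false and $\operatorname{diam}\widetilde C > 0$, you would still get $\operatorname{diam} F_{r_n}\to 0$ along a subsequence from the length--area estimate, while $d_{r_n}\ge\operatorname{diam}\widetilde C>0$ for all $n$ --- so the inequality $d_r\lesssim\operatorname{diam}\phi(E_r)$ simply cannot hold. The ``joinable inside $V_{r'}$'' observation does not help because the joining paths have uncontrolled length.

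What is missing is the modulus lower bound from \Cref{lemm:positivemodulus}, which is the paper's actual engine. The paper argues by contradiction: if $\operatorname{diam}\widetilde C>0$, pick a subarc $C\subset\widetilde C$ with $d(C,\,C'\cup(\partial Y\setminus\widetilde C))>0$ for a fixed arc $C'\subset Y$. \Cref{lemm:positivemodulus} (a coarea/separation argument using \Cref{thm:topology}) gives $\operatorname{Mod}\Gamma(C,C';Y\cup C)>0$. On the other hand, the topological analysis of $F_r$ and $U_r\cap\partial Y$ shows $C\cap F_r=\emptyset$ for the good radii, so every path in $\Gamma(C,C';Y\cup C)$ must cross $F_r\cap Y$; pulling back by $\phi$ and using \Cref{lemm:points} then forces this modulus to be zero, a contradiction. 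In short: you cannot bound $d_r$ directly by the length of the crosscut image; you must instead show that a nondegenerate $\widetilde C$ would have to carry positive modulus to the interior, and then kill that modulus with the crosscut estimate.
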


Before proving \Cref{lemm:intersection:point}, we prove a couple of technical lemmas. In the following, an \emph{arc} refers to a set homeomorphic to $\left[0, 1\right]$.

\begin{lemm}\label{lemm:positivemodulus}
Let $C' \subset Y$ be an arc and $C \subset \partial Y$ a compact and connected set. Then
\begin{equation}
    \label{eq:intersection:nontrivial}
    \diam C > 0
    \quad\text{implies}\quad
    \Mod \Gamma( C, C'; Y \cup C )
    >
    0.
\end{equation}
\end{lemm}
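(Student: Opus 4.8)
The goal is to show that if $C \subset \partial Y$ is a nondegenerate continuum and $C' \subset Y$ is an arc, then the path family $\Gamma(C, C'; Y \cup C)$ has positive modulus. The strategy is to argue by contradiction: suppose $\Mod \Gamma(C, C'; Y\cup C) = 0$. Then there is an admissible function $\rho$ with arbitrarily small $L^2$-norm, and I want to use $\rho$ to build a "short curve obstruction" that contradicts the nondegeneracy of $C$. The cleanest route is to transplant the problem to the disk via $\phi^{-1}$ and use that $\phi$ is quasiconformal, hence the modulus of the pulled-back family is comparable (within a factor $K$) — but the subtlety is that $C \subset \partial Y$ does not lie in $Y$, so $\phi^{-1}(C)$ is not defined; instead one works with the extension-to-the-boundary machinery or argues intrinsically in $\overline Y$. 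So I would instead argue directly in $\overline{Y}$ using the coarea inequality \Cref{thm:eilenberg}, much as in the proof of \Cref{lemm:points}.

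\textbf{Key steps.} First, fix a point $p \in C'$ (an interior point of the arc, so that a small ball around $p$ meets $C'$ in a nondegenerate subarc) and note that since $C' \subset Y$ is compact, $\delta_0 := \dist(C', \partial Y) > 0$; in particular $C'$ and $C$ are at positive distance, say $\dist(C, C') \geq \delta_0$. Also $\diam C =: 2\tau > 0$. Second, for the distance function $f(x) = d(x, C)$ (which is $1$-Lipschitz on $\overline Y$), observe every $\gamma \in \Gamma(C, C'; Y\cup C)$ meets every level set $f^{-1}(t)$ for $0 < t < \delta_0$, because $\gamma$ runs from $C$ (where $f = 0$) to $C'$ (where $f \geq \delta_0$). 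Third, build the test function $\rho(x) = \frac{1}{\log(\delta_0/s)} \cdot \frac{1}{f(x)} \chi_{\{ s \leq f(x) \leq \delta_0\}}$ for small $s>0$; as in \Cref{lemm:points}, using \eqref{eq:areaformula} and \Cref{thm:eilenberg} this $\rho$ is admissible for $\Gamma(C,C';Y\cup C)$. Fourth — and this is where nondegeneracy of $C$ enters — I need an \emph{upper} bound on $\int \rho^2 \, d\mathcal{H}^2_{\overline Y}$ that does \emph{not} degenerate, i.e. a bound on $\mathcal{H}^2_{\overline Y}(\{ s \leq f \leq 2s\})$ of the form $\lesssim s^2$ or at least $o(1/\log(1/s))$ after summing dyadically. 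Since we have no a priori area upper bound near $\partial Y$ in the generality of \Cref{thm:carat:metric}, this naive approach via $f = d(\cdot, C)$ will \emph{not} directly work, and the real argument must be different.

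\textbf{The genuinely different approach I would take.} Since unrestricted area bounds are unavailable, I would instead exploit the structure coming from \Cref{thm:topology} and the fact that $\overline Y$ is homeomorphic to a disk. The key point: because $C \subset \partial Y$ has $\diam C > 0$ and $\partial Y \cong \mathbb{S}^1$, the set $C$ contains a subarc, and one can find a point $q \in Y$ near $C$ together with two points $c_1, c_2 \in C$ with $d(c_1, c_2) \geq \tau$ such that $C$ "surrounds" part of $Y$; more usefully, one uses that the arc $C'$ is connected and nondegenerate and picks \emph{many} parallel separating continua. Concretely, I would consider the family of separating continua provided by \Cref{thm:topology}: for appropriate disjoint compact sets one gets continua in $\overline Y$ joining two sides, and one shows $\Gamma(C, C'; Y\cup C)$ \emph{overflows} (every path in it contains a subpath in) a family of paths connecting the two endpoints-region of the arc $C'$ across an annular region in $\overline Y$, whose modulus is positive by \cite{RR:19} / the lower bound in \eqref{upper:bound} applied to a suitable quadrilateral $Q \subset \overline Y$ that has $C$ and $C'$ on opposite sides. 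Since $\overline Y$ is a metric disk with $\mathcal{H}^2_{\overline Y}(\overline Y) < \infty$, and since a quadrilateral $Q$ neighbourhood of $C \cup C'$ can be chosen with $C$ contained in one boundary arc $\xi_1$ and $C'$ in the opposite arc $\xi_3$, the reciprocality lower bound gives $\Mod\Gamma(\xi_1,\xi_3;Q) \geq \kappa_0 / \Mod\Gamma(\xi_2,\xi_4;Q) > 0$, and $\Gamma(\xi_1,\xi_3;Q) \subset \Gamma(C,C';Y\cup C)$ after intersecting with $Y\cup C$, so by monotonicity of modulus under overflowing, $\Mod\Gamma(C,C';Y\cup C) > 0$.

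\textbf{Main obstacle.} The hard part is constructing the quadrilateral $Q \subset \overline Y$ (equivalently, the annular/strip region) that genuinely separates and has $\Mod\Gamma(\xi_2,\xi_4;Q) < \infty$ — one must rule out that \emph{every} such quadrilateral has $\Mod\Gamma(\xi_2,\xi_4;Q) = \infty$ (which would make the lower bound vacuous). Here is where $\diam C > 0$ is essential: it lets one choose $\xi_2, \xi_4$ to be genuinely separated "short sides", and one bounds $\Mod\Gamma(\xi_2,\xi_4;Q)$ from above by the modulus of a curve family connecting two disjoint nondegenerate continua in the finite-area metric disk $\overline Y$, which is finite by a standard argument (e.g. using that $\overline Y$ is linearly locally connected enough near interior points, or more robustly, by Loewner-type lower bounds transplanted through $\phi$ since $\Mod\Gamma(\xi_2,\xi_4;Q \cap Y) \leq K \Mod \phi^{-1}\Gamma(\xi_2,\xi_4; \cdot)$ and the latter is a planar modulus of two continua of definite size at definite distance, hence finite). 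I would carry out precisely this reduction: transplant a well-chosen subfamily to $\mathbb{D}$ via $\phi^{-1}$ (legitimate, since after shrinking we may assume the relevant sets lie in $Y$), use finiteness of planar modulus there, and pull back via the $K$-quasiconformality inequality \eqref{eq:def:QCmaps} to conclude.
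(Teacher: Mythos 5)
Your proposal takes a genuinely different route from the paper, but it has gaps that I do not see how to close without essentially rediscovering the paper's argument. The quadrilateral/reciprocality idea runs into three problems. First, the lower bound $\Mod\Gamma(\xi_1,\xi_3;Q)\Mod\Gamma(\xi_2,\xi_4;Q)\geq\kappa_0$ from \cite{RR:19} (and the reciprocality inequality \eqref{upper:bound}) is stated for quadrilaterals in a metric surface, i.e.\ a $2$-manifold without boundary; your $Q$ must have $\xi_1 \supset C \subset \partial Y$, so it lies in the manifold-with-boundary $\overline Y$ and the result does not apply as stated. Second, the inclusion $\Gamma(\xi_1,\xi_3;Q)\subset\Gamma(C,C';Y\cup C)$ is only correct if $\xi_1 = C$ exactly and $Q\setminus C\subset Y$; if $\xi_1 \supsetneq C$ then a lower bound on $\Mod\Gamma(\xi_1,\xi_3;Q)$ says nothing about the smaller family with $C$ in place of $\xi_1$ (modulus is monotone in the wrong direction for your purposes). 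Third, and most seriously, you never construct the quadrilateral or prove $\Mod\Gamma(\xi_2,\xi_4;Q)<\infty$; your transplanting idea via $\phi^{-1}$ fails because $\xi_2$ and $\xi_4$ each have an endpoint on $\partial Y$ (an endpoint of $C$), so $\Gamma(\xi_2,\xi_4;Q)$ contains paths through $\partial Y$ that $\phi^{-1}$ cannot see, and discarding them changes the modulus in the wrong direction.

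The paper's proof is both more elementary and sidesteps the area-bound obstacle you correctly identified in your first attempt, but not by abandoning the coarea inequality: the trick is to apply it with $f(z) = d(|\gamma|,z)$, the distance to a well-chosen \emph{connecting arc} $\gamma$ from $C$ to $C'$ (not the distance to $C$). Choose $\gamma$ so that $r_0 := d(|\gamma|,\partial Y\setminus C)>0$ and so that $\gamma(0)$ splits $C$ into subarcs $J_1,J_2$ and $\gamma(1)$ splits $C'$ into $J_3,J_4$. For small $r$, the level set $f^{-1}(r)$ separates $|\gamma|$ from $\overline{\partial Y\setminus C}$, so \Cref{thm:topology} gives a separating continuum $\Gamma_r\subset f^{-1}(r)$, which must meet all four $J_i$; by the coarea inequality $\mathcal H^1_{\overline Y}(f^{-1}(r))<\infty$ for a.e.\ $r$, so $\Gamma_r$ is rectifiably connected and contains a rectifiable arc $\theta\in\Gamma(C,C';Y\cup C)$. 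Admissibility then gives $1\leq\int_{f^{-1}(r)}\rho\,d\mathcal H^1_{\overline Y}$ for a.e.\ small $r$, and integrating in $r$ and applying the coarea inequality in the other direction together with H\"older yields $r_1 \leq \tfrac{4}{\pi}(\mathcal H^2_{\overline Y}(\overline Y))^{1/2}\|\rho\|_{L^2}$, a lower bound depending only on $C$, $C'$, and the finite total area. No local area upper bound near $\partial Y$, no reciprocality constant, and no transplanting to the disk are needed. The lesson is that your step ``bound $\|\rho\|_{L^2}$ from above'' was the wrong move; one instead integrates the admissibility \emph{lower} bound over a positive-measure set of levels, which turns finiteness of total area into a positive lower bound on $\|\rho\|_{L^2}$.
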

\begin{proof}[Proof of \Cref{lemm:positivemodulus}]
Since $C$ and $C'$ are disjoint, there are Borel functions $\rho \in L^{2}( \overline{Y} )$ admissible for $\Gamma( C, C'; Y \cup C )$. We fix such a function $\rho$ and find a lower bound for the $L^{2}$-norm of $\rho$, depending only on $C$ and $C'$. The claim \eqref{eq:intersection:nontrivial} follows from this.

We argue as in the proof of \cite[Proposition 3.5]{Raj:17}. First, we join $C$ and $C'$ with an arc $\gamma \colon \left[0, 1\right] \rightarrow Y \cup C$ for which $r_{0} = d( \abs{\gamma}, \partial Y \setminus C ) > 0$, and consider the Lipschitz function $f(z) = d( \abs{\gamma}, z )$. Since $C$ and $C'$ are arcs, we can choose $\gamma$ in such a way that $\gamma(0)$ separates $C$ into two arcs $J_{1}$ and $J_{2}$, $\gamma(1)$ separates $C'$ into two arcs $J_{3}$ and $J_{4}$, and $\gamma(t) \in Y \setminus ( C \cup C' )$ for every $0 < t < 1$.

Fix $0 < r_{1} < r_{0}$ such that every $J_{i}$ intersects $f^{-1}( r )$ for every $0 < r < r_{1}$. For every such $r > 0$, the level set $f^{-1}( r )$ separates $\abs{ \gamma }$ from the arc $\overline{ \partial Y \setminus C }$. Then \Cref{thm:topology} provides us with a continuum $\Gamma_{r} \subset f^{-1}( r )$ that separates $\abs{ \gamma }$ from $\overline{ \partial Y \setminus C }$. The continuum $\Gamma_{r}$ must intersect every $J_{i}$, since otherwise we find a path joining $\abs{ \gamma }$ to $\overline{ \partial Y \setminus C }$ that does not intersect $\Gamma_{r}$.

By applying \Cref{thm:eilenberg} to the function $g(y) = \chi_{ \overline{Y} }(y)$, we conclude that the level set $f^{-1}( r )$ has finite Hausdorff $1$-measure for $\mathcal{L}^{1}$-almost every $0 < r < r_{1}$. In particular, the continuum $\Gamma_{r}$ has finite Hausdorff $1$-measure. Then every pair of points from $\Gamma_{r}$ can be joined with a rectifiable path within $\Gamma_{r}$ \cite[Proposition 15.1]{Sem:96:PI}. Consequently, there exists a rectifiable arc $\theta \colon \left[0, 1\right] \rightarrow \Gamma_{r}$ joining $J_{1} \subset C$ to $J_{3} \subset C'$. Since $0 < r < r_{1} < r_{0}$, we have $\theta \in \Gamma( C, C'; Y \cup C )$. Hence
\begin{equation*}
    1
    \leq
    \int_{ \theta }
        \rho
    \,ds
    \leq
    \int_{ f^{-1}( r ) }
        \rho
    \,d\mathcal{H}^{1}_{ \overline{Y} }.
\end{equation*}
Then \Cref{thm:eilenberg} and Hölder's inequality imply
\begin{equation*}
    r_{1}
    \leq
    \frac{ 4 }{ \pi } \left( \mathcal{H}^{2}_{Y}( \overline{Y} ) \right)^{1/2} \norm{ \rho }_{ L^{2}( \overline{Y} ) }.
\end{equation*}
Rearranging this inequality establishes the claim.
\end{proof}

\begin{lemm}\label{lemm:ACpath}
Let $\theta \colon ( 0, 1 ) \rightarrow G \subset \overline{Y}$ be a homeomorphism and suppose that for every $0 < s < t < 1$,
\begin{equation*}
    \ell( \theta|_{ \left[s, t\right] } )
    \leq
    \int_{ s }^{ t }
        h(a)
    \,d\mathcal{L}^{1}(a)
\end{equation*}
for some $h \in L^{1}( \left[0, 1\right] )$. Then there exists an absolutely continuous extension $\overline{ \theta } \colon \left[0, 1\right] \rightarrow \overline{G}$ of $\theta$ that is surjective.
\end{lemm}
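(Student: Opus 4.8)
The plan is to build the extension by taking limits at the two endpoints and then verifying that the resulting path is still absolutely continuous. Set $L(t) = \int_{0}^{t} h \, d\mathcal{L}^{1}$, a nondecreasing absolutely continuous function on $[0,1]$; the hypothesis reads $\ell(\theta|_{[s,t]}) \le L(t) - L(s)$ for $0 < s < t < 1$, so in particular $\theta$ is rectifiable with total length at most $L(1) < \infty$. First I would show that $\theta(s)$ has a limit in $\overline{Y}$ as $s \to 0^{+}$: for $0 < s < s'$ small, $d(\theta(s), \theta(s')) \le \ell(\theta|_{[s,s']}) \le L(s') - L(s)$, which tends to $0$ by absolute continuity of $L$, so the net $(\theta(s))_{s \to 0^{+}}$ is Cauchy and converges since $\overline{Y}$ is complete. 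Call the limit $\overline{\theta}(0)$, define $\overline{\theta}(1)$ symmetrically, and set $\overline{\theta} = \theta$ on $(0,1)$; this gives a continuous map $\overline{\theta} \colon [0,1] \to \overline{Y}$ extending $\theta$, and its values at $0$ and $1$ lie in $\overline{G}$, being limits of points of $G$.

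Next I would promote the length bound to the closed interval: for any partition $s = t_{0} < \dots < t_{n} = t$ of $[s,t] \subset [0,1]$, each increment $d(\overline{\theta}(t_{i-1}), \overline{\theta}(t_{i}))$ with both nodes interior is at most $L(t_{i}) - L(t_{i-1})$ by hypothesis, while any increment involving $t_{0}=0$ or $t_{n}=1$ is handled by passing to the limit inside the inequality and using continuity of $L$; summing and taking the supremum over partitions yields $\ell(\overline{\theta}|_{[s,t]}) \le L(t) - L(s)$ for all $0 \le s \le t \le 1$. Then the length function $\sigma(t) = \ell(\overline{\theta}|_{[0,t]})$ is nondecreasing with $0 \le \sigma(t) - \sigma(s) \le L(t) - L(s)$, hence is absolutely continuous as a real function; its a.e. derivative is the metric speed $v_{\overline{\theta}}$ of $\overline{\theta}$ (cf. \cite[Theorem 2.1]{Dud:07}), and since $L - \sigma$ is nondecreasing we get $v_{\overline{\theta}} = \sigma' \le L' = h$ a.e. In particular $v_{\overline{\theta}} \in L^{1}([0,1])$, and $d(\overline{\theta}(t), \overline{\theta}(s)) \le \sigma(t) - \sigma(s) = \int_{s}^{t} v_{\overline{\theta}} \, d\mathcal{L}^{1}$, which is precisely the definition of $\overline{\theta}$ being absolutely continuous.

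For surjectivity I would note that $\overline{\theta}([0,1])$ is compact, hence closed in $\overline{Y}$, and contains $\theta((0,1)) = G$, so it contains $\overline{G}$; combined with $\overline{\theta}(0), \overline{\theta}(1) \in \overline{G}$ this forces $\overline{\theta}([0,1]) = \overline{G}$, i.e.\ $\overline{\theta} \colon [0,1] \to \overline{G}$ is surjective. I expect the only step requiring genuine care to be the second one: checking that the pointwise length estimate survives the limits at the endpoints and that this (via the length function $\sigma$) upgrades mere continuity of the extension to absolute continuity with $L^{1}$ metric speed; the first and third steps are routine completeness and compactness arguments.
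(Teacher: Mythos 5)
Your argument is correct and follows essentially the same route as the paper: use completeness of $\overline{Y}$ together with absolute continuity of the integral of $h$ to define the endpoint values, extend the length inequality to $[0,1]$, and conclude surjectivity by a compactness argument. Your write-up is more explicit than the paper's in one place — the paper simply asserts that the extended inequality implies absolute continuity, while you spell this out via the length function $\sigma$, its absolute continuity as a real function, and the identification $\sigma' = v_{\overline{\theta}} \le h$ a.e.\ — but this is a matter of detail, not a different method.
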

\begin{proof}[Proof of \Cref{lemm:ACpath}]
Let $0 < s < t < 1$. Then
\begin{equation}
    \label{eq:lemm:ACpath:ends}
    d( \theta( s ), \theta( t ) )
    \leq
    \int_{ 0 }^{ 1 }
        \left( \chi_{ \left[0, s\right] }(a) + \chi_{ \left[0, t\right] }(a) \right)
        h(a)
    \,d\mathcal{L}^{1}(a).
\end{equation}
By the absolute continuity of the integral, given $\epsilon > 0$, there exists $\delta > 0$ for which
\begin{equation}
    \label{eq:lemm:ACpath}
    \abs{s}, \abs{t} < \delta
    \quad\text{implies}\quad
    \int_{ 0 }^{ 1 }
        \left( \chi_{ \left[0, s\right] } + \chi_{ \left[0, t\right] } \right)(a)
        h(a)
    \,d\mathcal{L}^{1}(a)
    <
    \epsilon.
\end{equation}
This fact and \eqref{eq:lemm:ACpath:ends} imply that for any given $( s_{j} )_{ j = 1 }^{ \infty } \subset \left( 0, 1 \right)$ converging to zero, the sequence $( \theta( s_{j} ) )_{ j = 1 }^{ \infty }$ is Cauchy. Since $\overline{Y}$ is complete, the sequence converges to some $y_{0} \in \overline{G}$. We define $\overline{ \theta }( 0 ) \coloneqq y_{0}$. The inequalities \eqref{eq:lemm:ACpath:ends} and \eqref{eq:lemm:ACpath} imply that $y_{0}$ is independent of the sequence $( s_{j} )_{ j = 1 }^{ \infty }$, and setting $\overline{\theta}(s) = \theta(s)$ for $0 < s$ defines a continuous extension of $\theta$ to $\left[0, 1\right)$.

By arguing similarly for $t = 1$, we find a continuous extension $\overline{\theta} \colon \left[0, 1\right] \rightarrow \overline{G}$ of $\theta$. The inequality \eqref{eq:lemm:ACpath:ends} extends to every $0 \leq s < t \leq 1$, implying the absolute continuity of $\overline{\theta}$. Notice that for every $y \in \overline{G}$, there exists a sequence $( t_{j} )_{ j = 1 }^{ \infty } \subset ( 0, 1 )$ such that $\overline{\theta}( t_{j} ) \rightarrow y$. By passing to a subsequence and relabeling, we may assume that $( t_{j} )_{ j = 1 }^{ \infty }$ has a limit in $\left[0, 1\right]$. This implies that $\overline{\theta}$ is surjective.
\end{proof}

\begin{proof}[Proof of \Cref{lemm:intersection:point}]
Since $\widetilde{C}$ is the intersection of the $U_{r}$ and $U_{r}$ are nested, we have $d_{r} \rightarrow \diam \widetilde{C}$. Hence the difficulty lies in proving $\diam \widetilde{C} = 0$.

Fix an arc $C' \subset Y$ for which $\phi^{-1}( C' ) \subset \mathbb{D} \setminus ( E_{r} \cup V_{r} )$ for every $0 < r < 2^{-1}$. We assume $\diam \widetilde{C} > 0$ and derive a contradiction. Since $\diam \widetilde{C} > 0$, there exist a subarc $C \subset \widetilde{C}$ such that $r_{0} = d( C, C' \cup ( \partial Y \setminus \widetilde{C} ) ) > 0$. We claim that
\begin{equation}
    \label{eq:intersection:trivial}
    \Mod \Gamma( C, C'; Y \cup C )
    =
    0.
\end{equation}
If \eqref{eq:intersection:trivial} holds for $C$, we obtain a contradiction with \Cref{lemm:positivemodulus}.

So it suffices to prove \eqref{eq:intersection:trivial}. We claim that there exists a sequence $r_{n} \rightarrow 0^{+}$ such that every $\theta \in \Gamma( C, C'; Y \cup C )$ has a subpath in $\Gamma( \phi( E_{r_n} \cap \mathbb{D} ), C'; Y )$. If this can be proved, then the $K$-quasiconformality of $\phi$ yields
\begin{align*}
    \Mod \Gamma( C, C'; Y \cup C )
    &\leq
    K \Mod \Gamma( E_{r_n} \cap \mathbb{D}, \phi^{-1}( C' ); \mathbb{D} ).
\end{align*}
Given \Cref{lemm:points}, the right-hand side converges to zero as $n \rightarrow \infty$, and we conclude \eqref{eq:intersection:trivial}. The rest of the proof is spent on finding the sequence of radii $( r_{n} )_{ n = 1 }^{ \infty }$.

The $K$-quasiconformality of $\phi$ yields that the minimal weak upper gradient $\rho_{\phi}$ satisfies $\rho_{\phi}^{2} \leq K J_{\phi} \in L^{1}( \mathbb{D} )$. The integrability of $J_{\phi}$ follows from the fact that $Y$ has finite Hausdorff $2$-measure. This implies that $\phi$ has an $L^{2}( \mathbb{D} )$-integrable upper gradient $g$ \cite[Lemma 6.2.2]{HKST:15}.

We consider $g_{0} = g \chi_{ \mathbb{D} } \in L^{2}( \overline{\mathbb{D}} )$. Polar coordinates centered at $x_{0}$ yield
\begin{equation}
    \label{eq:integration}
    \infty
    >
    \norm{ g_{0} }_{ L^{1}( \overline{\mathbb{D}} ) }
    \geq
    \int_{ 0 }^{ 2^{-1} }
    \int_{ E_{r} }
        g_{0}
    \,d\mathcal{H}^{1}
    \,d\mathcal{L}^{1}(r).
\end{equation}
In particular, $g_{0}$ has a finite path integral over $E_{r}$ for almost every $0 < r < 2^{-1}$. Let $I$ denote those $0 < r < 2^{-1}$ for which this holds.

Let $\Gamma_{0}$ be the family of non-constant rectifiable paths in $\overline{ \mathbb{D} }$ along which $g_{0}$ is not integrable. Consider an absolutely continuous non-constant path $\gamma \colon \left[a, b\right] \rightarrow \overline{ Y }$ with image in $Y$ and which is not an element of $\Gamma_{0}$. Since $g$ is an upper gradient of $\phi$, we have that
\begin{equation}
    \label{eq:pwcontrol:speed}
    \ell( \phi \circ \gamma )
    \leq
    \int_{ \gamma } g \,ds
    =
    \int_{ \gamma } g_{0} \,ds;
    \text{ see \cite[Proposition 6.3.2]{HKST:15}.}
\end{equation}
Consider a surjective Lipschitz $\gamma_{r} \colon \left[0, 1\right] \rightarrow E_{r}$ for $r \in I$. Then, for every $0 < s < t < 1$, \eqref{eq:pwcontrol:speed} implies
\begin{equation*}
    \ell( ( \phi \circ \gamma_{r} )|_{ \left[s, t\right] } )
    \leq
    \int_{ \gamma_{r}|_{ \left[s, t\right] } } g_{0} \,ds
    \leq
    \int_{ E_{r} } g_{0} \,d\mathcal{H}^{1},
\end{equation*}
where the last term on the right is finite. Therefore $\theta_r = \phi \circ \gamma_r \colon ( 0, 1 ) \rightarrow \phi( E_{r} \cap \mathbb{D} )$ satisfies the assumptions of \Cref{lemm:ACpath}. Hence there exists a continuous extension $\overline{ \theta_{r} } \colon \left[0, 1\right] \rightarrow F_{r}$ onto $F_{r} \coloneqq \overline{ \phi( E_{r} \cap \mathbb{D} ) }$.

Since $\phi$ is a homeomorphism, $\overline{ \theta_{r} }( s ) \not\in Y$ for both $s= 0, 1$. Hence $F_{r}$ is homeomorphic to a circle or an arc. We note that $\phi( V_{r} )$ is one of the connected components of $Y \setminus F_{r}$. In particular, $U_{r} = \overline{ \phi( V_{r} ) }$ is homeomorphic to $\left[0, 1\right]^{2}$, $U_{r} \cap \partial Y$ is a point or an arc, and $C \subset \widetilde{C} \subset U_{r} \cap \partial Y$. As the ends of $U_{r} \cap \partial Y$ and $F_{r}$ coincide, $d( C, \partial Y \setminus \widetilde{C} ) > 0$ implies that $U_{r} \cap \partial Y$ is an arc and $C \cap F_{r} = \emptyset$. This means that every path $\theta \in \Gamma( C, C'; Y \cup C )$ has a subpath in $\Gamma( F_{r} \cap Y, C'; Y )$. Then \eqref{eq:intersection:trivial} follows by taking a sequence $( r_{n} )_{ n = 1 }^{ \infty } \subset I$ converging to zero.
\end{proof}

\begin{lemm}\label{lemm:continuous}
The mapping $\Phi \colon \overline{ \mathbb{D} } \rightarrow \overline{Y}$ is continuous and surjective.
\end{lemm}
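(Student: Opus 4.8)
The plan is to check continuity on $\mathbb{D}$ and on $\mathbb{S}^{1}$ separately, and then to deduce surjectivity from compactness of $\overline{\mathbb{D}}$. Throughout, for $x \in \mathbb{S}^{1}$ and $0 < s < 2^{-1}$ I write $E^{x}_{s} = \mathbb{S}^{1}(x,s) \cap \overline{\mathbb{D}}$, let $V^{x}_{s}$ be the component of $\mathbb{D} \setminus E^{x}_{s}$ whose closure contains $x$, and set $U^{x}_{s} = \overline{\phi(V^{x}_{s})}$; thus $\Phi(x)$ is the unique point of $\widetilde{C}^{x} = \bigcap_{0 < s < 2^{-1}} U^{x}_{s}$, and for $x = x_{0}$ these reduce to the $E_{s}, V_{s}, U_{s}, \widetilde{C}$ of the construction. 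The one elementary geometric fact I will use is that $V^{x}_{s} = B_{\overline{\mathbb{D}}}(x,s) \cap \mathbb{D}$: the set $B_{\overline{\mathbb{D}}}(x,s) \cap \mathbb{D}$ is connected, disjoint from $E^{x}_{s}$, and relatively closed in $\mathbb{D} \setminus E^{x}_{s}$ (its boundary within $\mathbb{D}$ lies in $E^{x}_{s}$), hence it is one component of $\mathbb{D} \setminus E^{x}_{s}$, and it is evidently the one whose closure contains $x$.

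Continuity on $\mathbb{D}$ is immediate, since $\mathbb{D}$ is open in $\overline{\mathbb{D}}$ and $\Phi|_{\mathbb{D}} = \phi$ is continuous. For continuity at $x_{0} \in \mathbb{S}^{1}$, the key step is to show $\Phi\bigl(B_{\overline{\mathbb{D}}}(x_{0},r)\bigr) \subseteq U_{r}$ for every $0 < r < 2^{-1}$. If $x \in B_{\overline{\mathbb{D}}}(x_{0},r) \cap \mathbb{D}$, then $x \in V_{r}$ by the fact above, so $\Phi(x) = \phi(x) \in \overline{\phi(V_{r})} = U_{r}$. If $x \in B_{\overline{\mathbb{D}}}(x_{0},r) \cap \mathbb{S}^{1}$, choose $0 < r' < \min\{\, r - |x - x_{0}|,\ 2^{-1} \,\}$; then by the triangle inequality $V^{x}_{r'} = B_{\overline{\mathbb{D}}}(x,r') \cap \mathbb{D} \subseteq B_{\overline{\mathbb{D}}}(x_{0},r) \cap \mathbb{D} = V_{r}$, hence $U^{x}_{r'} = \overline{\phi(V^{x}_{r'})} \subseteq U_{r}$, and since $\Phi(x) \in \widetilde{C}^{x} \subseteq U^{x}_{r'}$ we get $\Phi(x) \in U_{r}$. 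This proves the claim. As $\Phi(x_{0}) \in \widetilde{C} \subseteq U_{r}$, every $x \in B_{\overline{\mathbb{D}}}(x_{0},r)$ satisfies $d(\Phi(x),\Phi(x_{0})) \leq \diam U_{r} = d_{r}$. Since $d_{r} \rightarrow 0$ as $r \rightarrow 0^{+}$ by \Cref{lemm:intersection:point}, $\Phi$ is continuous at $x_{0}$; as $x_{0}$ was arbitrary, $\Phi$ is continuous on $\overline{\mathbb{D}}$.

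For surjectivity, $\overline{\mathbb{D}}$ is compact and $\Phi$ is continuous, so $\Phi(\overline{\mathbb{D}})$ is a compact, hence closed, subset of $\overline{Y}$. It contains $\Phi(\mathbb{D}) = \phi(\mathbb{D}) = Y$, and $Y$ is dense in its completion $\overline{Y}$. Therefore $\Phi(\overline{\mathbb{D}}) = \overline{Y}$, which is the asserted surjectivity (and, since $\Phi(\mathbb{S}^{1}) \subseteq \partial Y$, it also gives $\Phi(\mathbb{S}^{1}) = \overline{Y} \setminus Y = \partial Y$, the remaining claim of \Cref{thm:carat:metric}).

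No step here is genuinely hard once \Cref{lemm:intersection:point} is in hand; the only point that needs care is the inclusion $\Phi(B_{\overline{\mathbb{D}}}(x_{0},r)) \subseteq U_{r}$ at boundary points $x \in \mathbb{S}^{1}$ near $x_{0}$, where $\Phi(x)$ is given by the nested intersection rather than by $\phi$ directly. There one has to squeeze the sets $U^{x}_{r'}$ defining $\Phi(x)$ inside $U_{r}$, and the identification $V^{x}_{r'} = B_{\overline{\mathbb{D}}}(x,r') \cap \mathbb{D}$ together with $V_{r} = B_{\overline{\mathbb{D}}}(x_{0},r) \cap \mathbb{D}$ is exactly what makes this work; everything else is the standard compactness argument.
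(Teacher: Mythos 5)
Your proof is correct and rests on the same key ingredient as the paper's, namely that $\diam U_{r} = d_{r} \rightarrow 0$ from \Cref{lemm:intersection:point}. The one place you diverge is in how you establish continuity at a boundary point $x_{0}$: the paper first handles sequences $\mathbb{D} \ni x_{n} \to x_{0}$ (whose images accumulate in $\bigcap_{r} U_{r}$), then treats general sequences $\overline{\mathbb{D}} \ni x_{n} \to x_{0}$ by approximating each $x_{n}$ by some $z_{n} \in \mathbb{D}$ with $\Phi(z_{n})$ close to $\Phi(x_{n})$; you instead prove the clean inclusion $\Phi\bigl(B_{\overline{\mathbb{D}}}(x_{0},r)\bigr) \subseteq U_{r}$ directly, handling boundary points $x$ near $x_{0}$ via the nesting $V^{x}_{r'} = B_{\overline{\mathbb{D}}}(x,r') \cap \mathbb{D} \subseteq B_{\overline{\mathbb{D}}}(x_{0},r) \cap \mathbb{D} = V_{r}$. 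This gives an explicit modulus of continuity $d(\Phi(x),\Phi(x_{0})) \leq d_{r}$ and avoids the two-step sequential reduction, which I find slightly more transparent, though the two arguments are essentially equivalent in content. Your surjectivity argument (compact image containing the dense set $Y$) and the paper's (subsequential limit of $\Phi^{-1}(y_{n})$, then continuity) are the same compactness fact packaged differently. Both are fine.
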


\begin{proof}
Let $x_{0} \in \mathbb{S}^{1}$. If $\mathbb{D} \ni x_{n} \rightarrow x_{0}$, the accumulation points of $( \Phi( x_{n} ) )_{ n = 1 }^{ \infty }$ are contained in the intersection of $U_{r}$, where $U_{r}$ are as in the definition of $\widetilde{C}$ in \eqref{eq:intersection:end}. \Cref{lemm:intersection:point} shows that the intersection is a singleton, which we defined to be $\Phi( x_{0} )$. This implies $\Phi( x_{n} ) \rightarrow \Phi( x_{0} )$.
 
More generally, if $\overline{ \mathbb{D} } \ni x_{n} \rightarrow x_{0}$, we find for every $x_{n}$ an element $z_n \in \mathbb{D}$ such that $d_{\overline{Y}}( \Phi( z_n ), \Phi( x_{n} ) ) \leq 2^{-n}$ and $\norm{ z_n - x_{n} } \leq 2^{-n}$. Then $\mathbb{D}\ni z_{n} \rightarrow x_{0}$ and $\Phi( z_{n} ) \rightarrow \Phi( x_{0} )$. Since $d_{Y}( \Phi( z_{n} ), \Phi( x_{n} ) ) \rightarrow 0$, we have $\Phi( x_{n} ) \rightarrow \Phi( x_{0} )$. This implies that $\Phi$ is continuous.

Consider now $y_{0} \in \overline{Y}$. Then there exists a sequence $Y \ni y_{n} \rightarrow y_{0}$. Up to passing to a subsequence and relabeling, $( \Phi^{-1}( y_{n} ) )_{ n = 1 }^{ \infty }$ converges to some $x_{0} \in \overline{\mathbb{D}}$. The continuity of $\Phi$ implies $\Phi( x_{0} ) = y_{0}$.
\end{proof}

\begin{lemm}\label{lemm:surjectivemonotone}
The mapping $\Phi \colon \overline{ \mathbb{D} } \rightarrow \overline{Y}$ is monotone.
\end{lemm}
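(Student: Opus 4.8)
The plan is to reduce the connectedness of each fiber $\Phi^{-1}(y_{0})$ to a statement about pairs of points on $\mathbb{S}^{1}$, and then to settle that statement with the Jordan curve theorem applied to a loop built from a chord of $\mathbb{D}$.

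First I would dispose of the easy cases. Since $\Phi$ is already known to be continuous, $\Phi^{-1}(y_{0})$ is compact for every $y_{0} \in \overline{Y}$. If $y_{0} \in Y$, then $\Phi^{-1}(y_{0}) = \phi^{-1}(y_{0})$ is a single point because $\phi$ is a homeomorphism. So it remains to treat $y_{0} \in \partial Y$, where $F \coloneqq \Phi^{-1}(y_{0})$ is a compact subset of $\mathbb{S}^{1}$ and we must show that $F$ is connected. I claim it suffices to prove: \emph{for all $x_{1} \neq x_{2}$ in $F$, at least one of the two closed subarcs $J, J'$ of $\mathbb{S}^{1}$ with endpoints $\{x_{1}, x_{2}\}$ lies in $F$.} Indeed, if $F$ were disconnected we could write $F = F_{1} \sqcup F_{2}$ with $F_{1}, F_{2}$ nonempty, disjoint and compact; choosing $x_{i} \in F_{i}$, the subarc among $J, J'$ that lies in $F$ is a connected subset of $F$ meeting both $F_{1}$ and $F_{2}$, a contradiction. (Here $J = J' = \mathbb{S}^{1}$ cannot occur, as it would force $\partial Y = \Phi(\mathbb{S}^{1})$ to be a single point.)

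The heart of the argument is the pair statement. Fix $x_{1} \neq x_{2}$ in $F$ and let $\tau \subset \mathbb{D}$ be the open chord joining $x_{1}$ and $x_{2}$; it splits $\mathbb{D}$ into two Jordan subdomains $D_{1}, D_{2}$ whose closures meet $\mathbb{S}^{1}$ in $J$ and $J'$ respectively. Since $\phi$ is a homeomorphism onto $Y$ while $\Phi(x_{1}) = \Phi(x_{2}) = y_{0} \in \partial Y$, the restriction $\Phi|_{\overline{\tau}}$ is injective on $\tau$, carries $\tau$ into $Y$, and collapses $\{x_{1}, x_{2}\}$ to $y_{0}$; passing to the quotient $\overline{\tau}/(x_{1} \sim x_{2}) \cong \mathbb{S}^{1}$ and using that $\overline{Y}$ is compact Hausdorff, we conclude that $T \coloneqq \phi(\tau) \cup \{y_{0}\}$ is a Jordan curve with $T \cap \partial Y = \{y_{0}\}$ and $T \setminus \{y_{0}\} \subset Y$. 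Next I would transport the picture to $\mathbb{S}^{2}$ via a homeomorphism of $\overline{Y}$ onto the closed southern hemisphere carrying $\partial Y$ to the equator, exactly as in the proof of \Cref{thm:topology}, and apply the Jordan curve theorem: of the two complementary domains of $T$ in $\mathbb{S}^{2}$, exactly one, say $\Omega_{1}$, contains the connected set $\partial Y \setminus \{y_{0}\}$ (which is disjoint from $T$); the other one, $\Omega_{2}$, is then disjoint from $\partial Y$, and since $\partial \Omega_{2} = T \neq \partial Y$ it cannot be contained in the open northern hemisphere, hence $\Omega_{2} \subset Y$. Thus $\overline{Y} \setminus T$ has the two components $\Omega_{2}$ — with $\overline{\Omega_{2}} = \Omega_{2} \cup T$, so $\overline{\Omega_{2}} \cap \partial Y = \{y_{0}\}$ — and $W \coloneqq \overline{Y} \setminus \overline{\Omega_{2}}$.

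To finish, observe that $\phi(D_{1})$ and $\phi(D_{2})$ are the two components of $Y \setminus \phi(\tau)$, each connected and disjoint from $T$, so the connected set $\Omega_{2}$ lies in one of them, say $\Omega_{2} \subset \phi(D_{1})$; since $\Omega_{2}$ is itself a whole component of $\overline{Y} \setminus T$ and $\phi(D_{1})$ is connected and disjoint from $T$, we must have $\phi(D_{1}) = \Omega_{2}$. As $J \subset \overline{D_{1}}$ and $\overline{D_{1}}$ is compact, $\Phi(J) \subset \Phi(\overline{D_{1}}) = \overline{\phi(D_{1})} = \overline{\Omega_{2}}$, and since $\Phi(J) \subset \partial Y$ we get $\Phi(J) \subset \overline{\Omega_{2}} \cap \partial Y = \{y_{0}\}$, i.e. $J \subset F$; symmetrically $J' \subset F$ if instead $\Omega_{2} \subset \phi(D_{2})$. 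This establishes the pair statement, hence $F$ is a continuum and $\Phi$ is monotone. I expect the main obstacle to be the careful verification that $T$ is genuinely a Jordan curve meeting $\partial Y$ in exactly $y_{0}$ and the identification, via the Jordan curve theorem on $\mathbb{S}^{2}$, of which complementary domain of $T$ is bounded solely by $T$; once this is set up, the remainder is routine point-set topology together with the combinatorics of arcs on $\mathbb{S}^{1}$.
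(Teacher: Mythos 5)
Your proposal is correct and follows essentially the same route as the paper: both build a Jordan arc in $\overline{\mathbb{D}}$ from $x_{1}$ to $x_{2}$ whose interior lies in $\mathbb{D}$ (a chord in your case, the two radial segments $I_{1}\cup I_{2}$ in the paper), observe that its $\Phi$-image is a Jordan loop meeting $\partial Y$ only at $y_{0}$, and conclude that the component of $\mathbb{D}$ cut off on the appropriate side has its boundary circular arc mapped to $\{y_{0}\}$. The only real difference is that you spell out, via the Jordan curve theorem on $\mathbb{S}^{2}$ as in the proof of \Cref{thm:topology}, the existence of the complementary component whose closure meets $\partial Y$ only at $y_{0}$, a step the paper's proof asserts without detail.
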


\begin{proof}
Let $y \in \partial Y$ and suppose that there are two distinct points $x_{1}$ and $x_{2}$ from $\mathbb{S}^{1}$ such that $\Phi( x_{1} ) = y = \Phi( x_{2} )$. Let $I_{i}$ be radial lines from $x_{i}$ to $0$ for $i = 1, 2$ and define $I = I_{0} \cup I_{1}$. Here $\Phi( I )$ is a Jordan loop intersecting $\partial Y$ exactly at $y$. Let $U$ denote the component of $Y \setminus \Phi( I )$ whose closure intersects $\partial Y$ only at $y$. Then $V = \Phi^{-1}( U )$ is one of the components of $\mathbb{D} \setminus I$. By construction, $\overline{V} \cap \mathbb{S}^{1}$ is connected and is mapped to the singleton $y$. Therefore $x_{1}$ and $x_{2}$ can be joined with a path in $\Phi^{-1}( y )$. The monotonicity of $\Phi$ follows.
\end{proof}

\subsection{Proof of \Cref{prop:QS}}
Recall that we fix a quasiconformal homeomorphism $\phi \colon \mathbb{D} \rightarrow Y$ and study its extension $\Phi$ under the assumption \eqref{eq:pointshavezeromod}. This is done in several parts. First, \Cref{lemm:injective} proves that the extension is a homeomorphism. \Cref{lemm:extensionisQC} implies that $\Phi$ is quasiconformal. After this is verified, \Cref{lemm:extensionisQC:samedilatation} proves that $K_{O}( \Phi ) = K_{O}( \phi )$ and $K_{I}( \Phi ) = K_{I}( \phi )$. In particular, the maximal dilatations of $\Phi$ and $\phi$ coincide.

To finish up the proof of \Cref{prop:QS}, we also need to verify that if $\Phi$ is a quasiconformal homeomorphism, then the assumption \eqref{eq:pointshavezeromod} holds. But this follows from the corresponding Euclidean result. So \Cref{prop:QS} follows after we prove the lemmas mentioned above.

\begin{lemm}\label{lemm:injective}
The mapping $\Phi$ is a homeomorphism.
\end{lemm}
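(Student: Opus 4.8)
The plan is to show that under the assumption \eqref{eq:pointshavezeromod}, the monotone surjection $\Phi$ from \Cref{thm:carat:metric} is injective; combined with continuity and compactness of $\overline{\mathbb D}$ this yields that $\Phi$ is a homeomorphism. Since $\Phi|_{\mathbb D} = \phi$ is already a homeomorphism onto $Y$ and $\Phi(\mathbb S^1) = \partial Y$ with $\Phi(\mathbb D)\cap\partial Y=\emptyset$, the only way injectivity can fail is if $\Phi|_{\mathbb S^1}$ identifies two distinct points of $\mathbb S^1$. So the crux is to rule out $\Phi(x_1)=\Phi(x_2)=:y\in\partial Y$ for distinct $x_1,x_2\in\mathbb S^1$. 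By \Cref{lemm:surjectivemonotone} the fiber $\Phi^{-1}(y)$ is a continuum, and (as in the proof of that lemma) if it contains two distinct boundary points it contains an arc in $\mathbb S^1$ together with a crosscut realizing it; equivalently, there is an open arc $\sigma\subset\mathbb S^1$, of positive length, with $\Phi(\sigma)=\{y\}$, and a Jordan subdomain $V\subset\mathbb D$ with $\overline V\cap\mathbb S^1=\overline\sigma$.

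First I would pick an auxiliary arc $C'\subset Y$ that stays away from $y$ — concretely, a compact arc with $\phi^{-1}(C')$ bounded away from $\overline\sigma$ — so that $\dist(y,C')>0$ and $\dist(y,\partial Y\setminus\{y\})$ can be arranged (after shrinking) to exceed some fixed $R_0>0$ with $\overline Y\setminus B_{\overline Y}(y,R_0)\supset C'$. On the disk side, the modulus $\Mod\Gamma(\sigma',\phi^{-1}(C');\mathbb D)$, where $\sigma'\subset\sigma$ is a nondegenerate closed subarc, is strictly positive: this is elementary planar conformal modulus of a quadrilateral / the fact that a nondegenerate boundary arc and a compact set in $\mathbb D$ are joined by a path family of positive modulus. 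By $K$-quasiconformality of $\phi$,
\begin{equation*}
    0 < K^{-1}\Mod\Gamma(\sigma',\phi^{-1}(C');\mathbb D) \le \Mod\phi\big(\Gamma(\sigma',\phi^{-1}(C');\mathbb D)\big).
\end{equation*}
Now every path in $\phi(\Gamma(\sigma',\phi^{-1}(C');\mathbb D))$ has one endpoint approaching $\Phi(\sigma')=\{y\}$ and the other on $C'$; taking closures, each such path, together with its limit endpoint $y$, gives a path (or a path family cofinal with one) joining $\overline B_{\overline Y}(y,r)$ to $\overline Y\setminus B_{\overline Y}(y,R_0)$ for every $r>0$. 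Hence
\begin{equation*}
    \Mod\phi\big(\Gamma(\sigma',\phi^{-1}(C');\mathbb D)\big)
    \le
    \Mod\Gamma\big(\overline B_{\overline Y}(y,r),\,\overline Y\setminus B_{\overline Y}(y,R_0);\,\overline Y\big)
\end{equation*}
for every $r>0$. Letting $r\to 0^+$ and invoking \eqref{eq:pointshavezeromod} forces the left side to be $0$, contradicting the strict positivity above. Therefore $\Phi|_{\mathbb S^1}$ is injective, so $\Phi$ is a continuous bijection from the compact space $\overline{\mathbb D}$ onto the Hausdorff space $\overline Y$, hence a homeomorphism.

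The main obstacle is the middle step: carefully relating the image path family $\phi(\Gamma(\sigma',\phi^{-1}(C');\mathbb D))$ to the "point-to-far-set" family at $y$, including the subtlety that paths in $\mathbb D$ terminating on $\sigma'$ need not be rectifiable up to the boundary, so one must either pass to the absolutely continuous extensions produced by \Cref{lemm:ACpath} (exactly as in the proof of \Cref{lemm:intersection:point}) or argue via subpaths that enter $\overline B_{\overline Y}(y,r)$. Concretely I would fix a small $r>0$, note that since $\Phi(\overline\sigma)=\{y\}$ and $\Phi$ is continuous, $\phi^{-1}(B_{\overline Y}(y,r)\cap Y)$ contains a one-sided neighborhood of $\sigma'$ in $\mathbb D$; hence any path from $\sigma'$ into $\phi^{-1}(C')$ has an initial subpath lying in $\phi(\text{that neighborhood})\subset B_{\overline Y}(y,r)$ and reaching $\overline B_{\overline Y}(y,r)$, while eventually it exits $B_{\overline Y}(y,R_0)$ to reach $C'$. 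This subpath inclusion gives the modulus inequality with no rectifiability worries, and the contradiction closes the argument. The rest — deducing the homeomorphism property and that $\Phi(\mathbb S^1)=\partial Y$ is a homeomorphism — is formal.
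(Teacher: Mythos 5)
Your argument is correct and follows essentially the same route as the paper: both take a nontrivial subcontinuum of $\Phi^{-1}(y)$ on $\mathbb{S}^{1}$ and an auxiliary arc in $Y$ far from $y$, use the positivity of the corresponding disk modulus together with the $K$-quasiconformality of $\phi$ (handling rectifiability via subpaths/AC truncations) to produce a positive lower bound for $\Mod\Gamma\bigl(\overline{B}_{\overline{Y}}(y,r),\overline{Y}\setminus B_{\overline{Y}}(y,R);\overline{Y}\bigr)$ uniform in $r$, and then contradict \eqref{eq:pointshavezeromod}. The only cosmetic difference is that the paper cites \Cref{lemm:positivemodulus} for the positivity of the disk modulus and works with a general continuum $E\subset\Phi^{-1}(y)$ and the family $\Gamma(y,r,R)\cap\mathrm{AC}(Y)$, while you make the same point with an explicit arc $\sigma\subset\mathbb{S}^{1}$ and a one-sided-neighborhood subpath argument.
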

\begin{proof}
Let $y \in \partial Y$ and suppose that there exists a non-trivial continuum $E \subset \Phi^{-1}( y )$. Let $F \subset Y \setminus \overline{B}_{ \overline{Y} }( y, R )$ be a non-trivial arc for some $R > 0$. For $0 < r < R$, let $\Gamma( y, r, R ) = \Gamma( \overline{B}_{ \overline{Y} }( y, r ), \overline{Y} \setminus B_{ \overline{Y} }( y, R ); \overline{Y} )$. Then
\begin{equation*}
    \Mod \Gamma( y, r, R )
    \geq
    \Mod( \Gamma( y, r, R ) \cap \mathrm{AC}( Y ) ),
\end{equation*}
where $\mathrm{AC}( Y )$ refers to those absolutely continuous paths in $\overline{Y}$ whose images lie in $Y$. Since $\phi = \Phi|_{ \mathbb{D} }$ is $K$-quasiconformal, we have that
\begin{equation*}
    \Mod( \Gamma( y, r, R ) \cap \mathrm{AC}( Y ) )
    \geq
    K^{-1} \Mod( \Gamma( E, \Phi^{-1}( F ); \mathbb{D} \cup E \cup \Phi^{-1}( F ) ) ).
\end{equation*}
The right-hand side is a strictly positive lower bound; recall \Cref{lemm:positivemodulus}. Therefore $\Mod \Gamma( y, r, R ) \geq C_{0} > 0$ for a constant independent of $r$. By passing to the limit $r \rightarrow 0^{+}$, we find a contradiction with \eqref{eq:pointshavezeromod}. The injectivity of $\Phi$ follows. Since $\Phi$ is continuous, surjective, and monotone, we conclude that $\Phi$ is a homeomorphism.
\end{proof}

Next we claim that $\Phi$ is quasiconformal. To this end, we let $\Psi = \Phi^{-1}$. Due to \Cref{prop:pointwisemodulus}, it is sufficient to find a constant $C_{0}$ such that for every $x \in \mathbb{S}^{1}$,
\begin{equation}
    \label{eq:theinequality:extensionisQC}
    \liminf_{ r \rightarrow 0^{+} }
    \Mod 
    \Psi^{-1} \Gamma
    \left( 
        \overline{B}_{ \overline{\mathbb{D}} }( x,  r ),
        \overline{\mathbb{D}} \setminus B_{ \overline{\mathbb{D}} }( x,  2r );
        \overline{B}_{ \overline{\mathbb{D}} }( x,  2r )
    \right)
    \leq
    C_{0}.
\end{equation}
We denote $\Gamma( x, r, 2r ) = \Gamma \left( \overline{B}_{ \overline{\mathbb{D}} }( x,  r ), \overline{\mathbb{D}} \setminus B_{ \overline{\mathbb{D}} }( x,  2r ); \overline{B}_{ \overline{\mathbb{D}} }( x,  2r ) \right)$ for the rest of the section.

Fix $0 < r < 1/4$. Let $\xi_{1} = \Psi^{-1}\left( \mathbb{S}^{1}( x, r ) \cap \overline{ \mathbb{D} } \right)$ and $\xi_{3} = \Psi^{-1}\left( \mathbb{S}^{1}( x, 2 r  ) \cap \overline{ \mathbb{D} } \right)$. Let $\xi_{2}$ and $\xi_{4}$ denote the subarcs of $\partial Y$ joining $\xi_{1}$ and $\xi_{3}$ in such a way that the arcs $ \xi_{1}, \xi_{2}, \xi_{3}, \xi_{4}$ form the boundary decomposition of a quadrilateral $Q$ in $\overline{Y}$. Then
\begin{equation*}
    \Mod \Psi^{-1} \Gamma( x, r, 2r )
    =
    \Mod \Gamma( \xi_{1}, \xi_{3}; Q )
    \eqqcolon
    M.
\end{equation*}

\begin{lemm}\label{lemm:Kai:Riemann}
There exists a homeomorphism $f \colon Q \rightarrow \left[0, 1\right] \times \left[0, M\right]$ with the following properties: First, $f( \xi_{1} ) = \left\{0\right\} \times \left[0, M\right]$ and $f( \xi_{3} ) = \left\{ 1 \right\} \times \left[0, M\right]$. Whenever $0 < a < b < M$ and $I = \left[a, b\right]$, let $Q^{0} = f^{-1}( \left[0,1\right] \times I )$,
\begin{align*}
    \xi_{1}^{0} &= f^{-1}( \left\{0\right\} \times I ),
    \quad
    \xi_{2}^{0} = f^{-1}( \left[0, 1\right] \times \left\{a\right\} ),
    \\
    \xi_{3}^{0} &= f^{-1}( \left\{1\right\} \times I ),
    \quad
    \xi_{4}^{0} = f^{-1}( \left[0, 1\right] \times \left\{b\right\} ). 
\end{align*}
Then $b-a = \Mod \Gamma( \xi_{ 1}^{0}, \xi_{3}^{0}; Q^{0} )$.
\end{lemm}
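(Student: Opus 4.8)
The plan is to realize $f$ as the canonical ``rectangle'' parametrization of the quadrilateral $Q$, the construction underlying Rajala's uniformization theorem for reciprocal metric surfaces \cite{Raj:17}. I would first observe that $Q$ is the metric completion of a reciprocal metric surface, then invoke that canonical parametrization, built from the extremal function of $\Gamma(\xi_{1},\xi_{3};Q)$ and its flux function, and finally read off $\Mod \Gamma(\xi_{1}^{0},\xi_{3}^{0};Q^{0})$ directly from the construction.

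\emph{Reciprocality.} By \Cref{lemm:injective} the map $\Phi = \Psi^{-1}$ is a homeomorphism of $\overline{\mathbb{D}}$ onto $\overline{Y}$. Let $\overline{A}$ be the Euclidean quadrilateral $\left\{ z \in \overline{\mathbb{D}} : r \leq \abs{ z - x } \leq 2r \right\}$, whose four boundary arcs are $\mathbb{S}^{1}(x,r) \cap \overline{\mathbb{D}}$, $\mathbb{S}^{1}(x,2r) \cap \overline{\mathbb{D}}$ and two subarcs of $\mathbb{S}^{1}$. Then $Q = \Phi(\overline{A})$, and since the topological interior of $\overline{A}$ equals $\left\{ z \in \mathbb{D} : r < \abs{ z - x } < 2r \right\} \subset \mathbb{D}$, we get $\interior Q = \phi( \interior \overline{A} ) \subset Y$. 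Composing a Riemann map of the quadrilateral $\overline{A}$ onto a Euclidean rectangle with the restriction of the $K$-quasiconformal map $\phi$ exhibits $\interior Q$ as a $K$-quasiconformal image of a rectangle, hence of $\mathbb{D}$. As the completion of $\interior Q$ is $Q \cong \left[0,1\right]^{2}$, with $\mathcal{H}^{2}_{\overline{Y}}( Q ) < \infty$ and $\partial Q \cong \mathbb{S}^{1}$, the surface $\interior Q$ is a quasiconformal Jordan domain and hence reciprocal, with reciprocality constant controlled by $K$.

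\emph{The canonical parametrization.} Set $M = \Mod \Gamma(\xi_{1},\xi_{3};Q)$, which is finite since $\xi_{1}, \xi_{3}$ are disjoint continua in a space of finite Hausdorff $2$-measure, and positive since $\interior Q$ is reciprocal. I would then apply the uniformization construction of \cite{Raj:17} to the reciprocal quadrilateral $Q$ to obtain two functions: the extremal function $u \colon Q \to \left[0,1\right]$ of $\Gamma(\xi_{1},\xi_{3};Q)$, which vanishes on $\xi_{1}$, equals $1$ on $\xi_{3}$, satisfies $\int_{Q} \rho_{u}^{2} \,d\mathcal{H}^{2}_{\overline{Y}} = M$, and whose level sets $u^{-1}(t)$ are continua separating $\xi_{1}$ from $\xi_{3}$; and the conjugate (flux) function $v \colon Q \to \left[0,M\right]$, normalized so that $v^{-1}(0) = \xi_{2}$ and $v^{-1}(M) = \xi_{4}$, with the property that for all $0 \leq a < b \leq M$ the subfamily of $\Gamma(\xi_{1},\xi_{3};Q)$ formed by the curves whose images lie in $v^{-1}( \left[a,b\right] )$ has modulus exactly $b - a$. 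Reciprocality is precisely what ensures that $f = (u,v) \colon Q \to \left[0,1\right] \times \left[0,M\right]$ is a homeomorphism with $f( \xi_{1} ) = \left\{0\right\} \times \left[0,M\right]$, $f( \xi_{3} ) = \left\{1\right\} \times \left[0,M\right]$, and that these modulus identities hold with equality rather than merely up to a multiplicative constant — the relevant serial rule for extremal length degenerating to an equality, with the universal lower bound for the product of conjugate moduli \cite{RR:19} and the reciprocal upper bound \eqref{upper:bound} as the quantitative inputs.

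\emph{Conclusion and main obstacle.} Given such an $f$, fix $0 < a < b < M$ and $I = \left[a,b\right]$. Since $u$ takes values in $\left[0,1\right]$ we have $Q^{0} = f^{-1}( \left[0,1\right] \times I ) = v^{-1}( I )$, and since $u^{-1}(0) = \xi_{1}$ and $u^{-1}(1) = \xi_{3}$ we have $\xi_{1}^{0} = \xi_{1} \cap v^{-1}( I )$ and $\xi_{3}^{0} = \xi_{3} \cap v^{-1}( I )$. Hence a path belongs to $\Gamma( \xi_{1}^{0}, \xi_{3}^{0}; Q^{0} )$ if and only if it belongs to $\Gamma( \xi_{1}, \xi_{3}; Q )$ and its image lies in $v^{-1}( I )$, so the defining property of $v$ yields $\Mod \Gamma( \xi_{1}^{0}, \xi_{3}^{0}; Q^{0} ) = b - a$, as claimed. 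The hard part will be the second step: extracting from \cite{Raj:17} the precise shape of the canonical parametrization so that the modulus of each horizontal sub-slice is exactly its height — equivalently, that the serial rule for extremal length holds with equality — together with the injectivity of $(u,v)$. The reciprocality check in the first step should be routine once one notes that $\interior Q$ lies in $Y$, where $\phi$ is quasiconformal.
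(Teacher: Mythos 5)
The paper's proof is a one-paragraph citation: it applies Rajala's Proposition~9.1 and Lemma~10.2 to the quadrilateral $Q$ inside the ambient $\overline{Y}$, and the verification consists of checking that \eqref{point:zero:modulus} holds at \emph{every} $x_{0} \in \overline{Y}$ --- at interior points because $Y$ is reciprocal (being a quasiconformal image of $\mathbb{D}$), and at boundary points because of the standing hypothesis \eqref{eq:pointshavezeromod} of this section --- plus the universal lower bound on the modulus product from \cite{RR:19}. Your plan shares the high-level strategy (realize $f$ as Rajala's extremal/flux pair $(u,v)$ and read off the modulus identity from the defining property of $v$), and your last paragraph, deducing $\Mod \Gamma(\xi_{1}^{0},\xi_{3}^{0};Q^{0}) = b-a$ from the fact that $Q^{0}=v^{-1}(I)$, is correct given such an $f$.

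The genuine gap is in your verification step. You establish that $\interior Q$ is reciprocal (as a quasiconformal Jordan domain), which controls the point-modulus condition only at \emph{interior} points of $Q$. But Rajala's construction of $f$ as a \emph{homeomorphism} of the closed quadrilateral $Q$ onto $[0,1]\times[0,M]$ --- as opposed to a merely monotone surjection on $\partial Q$ --- requires the point-modulus condition \eqref{point:zero:modulus} at the boundary of $Q$ as well, and the arcs $\xi_{2}, \xi_{4}$ together with the four corners of $Q$ lie on $\partial Y$. Reciprocality of the open surface $\interior Q$ says nothing about these points. It is precisely the assumption \eqref{eq:pointshavezeromod} on $\partial Y$ that supplies this, and you never invoke it (beyond its hidden use via \Cref{lemm:injective} to know $\Psi$ exists). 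This is not a technicality: it is exactly the distinction between the paper's \Cref{thm:carat:metric}, which only yields a monotone boundary extension, and \Cref{prop:QS}, where \eqref{eq:pointshavezeromod} upgrades it to a homeomorphism. The ``injectivity of $(u,v)$'' that you flag as a hard part is therefore not something one can extract from \cite{Raj:17} with the hypotheses you have checked; you need to add the observation that \eqref{point:zero:modulus} also holds at every $x_{0} \in \partial Y$ by \eqref{eq:pointshavezeromod}, which is how the paper makes Rajala's results applicable to $Q \subset \overline{Y}$.
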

\begin{proof}
Proposition 9.1 \cite{Raj:17} and \cite[equation (57), Lemma 10.2]{Raj:17} provide us with $f$ having the stated properties. Notice that \cite[Proposition 9.1]{Raj:17} is applicable since \eqref{point:zero:modulus} holds for every $x_{0} \in \overline{Y}$ and the product in \eqref{upper:bound} is always bounded from below by a universal constant $\kappa_{0} >0$ \cite{RR:19}. These facts allow us to apply \cite[equation (57), Lemma 10.2]{Raj:17} as well.
\end{proof}

\begin{lemm}\label{lemm:extensionisQC}
The inequality \eqref{eq:theinequality:extensionisQC} holds for a constant $C_{0} = 2 K C_{1}$, where $C_{1}$ depends only on $\mathbb{D}$ and $K$ is the maximal dilatation of $\phi$.
\end{lemm}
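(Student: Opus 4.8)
The plan is to bound $M = \Mod\Gamma(\xi_{1},\xi_{3};Q)$ above by a constant depending only on $\mathbb{D}$ times the maximal dilatation $K$ of $\phi$, uniformly in $x \in \mathbb{S}^{1}$ and $0 < r < 1/4$; since $M = \Mod \Psi^{-1}\Gamma(x,r,2r)$, this is exactly \eqref{eq:theinequality:extensionisQC}. I will do this by sandwiching the conjugate modulus $M^{*} = \Mod \Gamma(\xi_{2},\xi_{4};Q)$, proving $M^{*} \le \kappa_{0}/M$ and $M^{*} \ge c_{1}/K$, where $\kappa_{0}$ is the universal lower bound for products of conjugate quadrilateral moduli \cite{RR:19} and $c_{1} > 0$ depends only on $\mathbb{D}$. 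Recall that $Q = \Phi(\widehat{Q})$ and $\xi_{i} = \Phi(\widehat{\xi}_{i})$, where $\widehat{Q} = \overline{B}_{\overline{\mathbb{D}}}(x,2r) \setminus B_{\overline{\mathbb{D}}}(x,r)$ is the lens bounded by the crosscuts $\widehat{\xi}_{1} = \mathbb{S}^{1}(x,r) \cap \overline{\mathbb{D}}$, $\widehat{\xi}_{3} = \mathbb{S}^{1}(x,2r) \cap \overline{\mathbb{D}}$ and by two arcs $\widehat{\xi}_{2}, \widehat{\xi}_{4} \subset \mathbb{S}^{1}$.

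For the upper bound on $M^{*}$ I slice $Q$ with the map $f$ of \Cref{lemm:Kai:Riemann}. Given $N \ge 3$, pick levels $0 = t_{0} < t_{1} < \dots < t_{N} = M$ of mesh at most $2M/N$, and set $Q_{j} = f^{-1}([0,1] \times [t_{j-1},t_{j}])$ and $\tau_{j} = f^{-1}([0,1] \times \{t_{j}\})$, so $\tau_{0} = \xi_{2}$ and $\tau_{N} = \xi_{4}$. Because $f$ is a homeomorphism with $f(\xi_{1}) = \{0\} \times [0,M]$ and $f(\xi_{3}) = \{1\} \times [0,M]$, for $2 \le j \le N-1$ all four sides of $Q_{j}$ and its interior lie in $Y$, so $Q_{j}$ is a quadrilateral in the metric surface $Y$; \Cref{lemm:Kai:Riemann} gives that the modulus in $Q_{j}$ of the family joining the portion of $\xi_{1}$ it contains to the portion of $\xi_{3}$ it contains equals $t_{j} - t_{j-1}$, so the universal bound $\kappa_{0}$ \cite{RR:19} forces $\Mod \Gamma(\tau_{j-1},\tau_{j};Q_{j}) \ge \kappa_{0}/(t_{j}-t_{j-1}) \ge N\kappa_{0}/(2M)$. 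Every path of $\Gamma(\xi_{2},\xi_{4};Q)$ crosses each $\tau_{j}$ and hence contains a subpath in each $\Gamma(\tau_{j-1},\tau_{j};Q_{j})$, and the $Q_{j}$ have pairwise disjoint interiors, so a standard serial inequality for moduli gives $M^{*} \le \left(\sum_{j=2}^{N-1} \Mod \Gamma(\tau_{j-1},\tau_{j};Q_{j})^{-1}\right)^{-1} \le N\kappa_{0}/\left(2(N-2)M\right)$; letting $N \to \infty$ yields $M^{*} \le \kappa_{0}/M$.

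For the lower bound on $M^{*}$, consider in $\overline{\mathbb{D}}$ the concentric circular arcs $c_{t} = \{\, z : |z-x| = t \,\} \cap \overline{\mathbb{D}}$ for $r < t < 2r$: each joins $\widehat{\xi}_{2}$ to $\widehat{\xi}_{4}$ inside $\widehat{Q}$ with interior in $\widehat{Q} \cap \mathbb{D}$, and a coarea estimate for the $1$-Lipschitz function $|z-x|$ combined with the Cauchy--Schwarz inequality gives $\Mod \{c_{t}\}_{r<t<2r} \ge c_{1}$ for some $c_{1} > 0$ depending only on $\mathbb{D}$. Since $\Phi$ is a homeomorphism, $\Phi(c_{t}) \in \Gamma(\xi_{2},\xi_{4};Q)$ for every $t$, and since $\Phi$ agrees on $\mathbb{D}$ with the $K$-quasiconformal $\phi$ (the endpoints on $\mathbb{S}^{1}$ being immaterial for moduli) we obtain $M^{*} \ge \Mod \{\Phi(c_{t})\}_{t} = \Mod \phi\{c_{t} \cap \mathbb{D}\}_{t} \ge K^{-1}\Mod \{c_{t}\}_{t} \ge c_{1}/K$. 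Combining the two estimates, $c_{1}/K \le M^{*} \le \kappa_{0}/M$, so $M \le \kappa_{0}K/c_{1}$, which is \eqref{eq:theinequality:extensionisQC} with $C_{0} = 2KC_{1}$, $C_{1} = \kappa_{0}/(2c_{1})$. The point requiring the most care is that $\Phi$ is not yet known to be quasiconformal across $\mathbb{S}^{1}$, so every modulus comparison between $\overline{\mathbb{D}}$ and $\overline{Y}$ has to be routed through the interior map $\phi$; in particular the upper bound on $M^{*}$ cannot come from the reciprocality upper bound of $\overline{Y}$ applied to $Q$, which meets $\partial Y$, nor even, if one wants the constant to depend only on $\mathbb{D}$, from applying that reciprocality constant to a single interior collar $Q_{\epsilon} \subset Y$; this is why one slices into many interior quadrilaterals $Q_{j}$ and invokes only the universal bound $\kappa_{0}$.
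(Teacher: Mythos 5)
There is a genuine gap in the upper bound for $M^{*}$. The constant $\kappa_{0}$ from \cite{RR:19} is a universal \emph{lower} bound on products of conjugate quadrilateral moduli, so what it gives you is $\Mod \Gamma(\tau_{j-1},\tau_{j};Q_{j}) \geq \kappa_{0}/(t_{j}-t_{j-1})$, a lower bound on each slice's modulus. But the serial rule $M^{*} \leq \bigl(\sum_{j} \Mod \Gamma(\tau_{j-1},\tau_{j};Q_{j})^{-1}\bigr)^{-1}$ only produces a useful upper bound on $M^{*}$ when you have a \emph{lower} bound on the sum of reciprocals, i.e.\ an \emph{upper} bound on each $\Mod \Gamma(\tau_{j-1},\tau_{j};Q_{j})$. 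From your lower bound on the slice moduli you get $\sum_{j}\Mod^{-1} \leq 2M(N-2)/(N\kappa_{0})$, hence $\bigl(\sum_{j}\Mod^{-1}\bigr)^{-1} \geq N\kappa_{0}/(2(N-2)M)$ — a lower bound on the serial-rule upper bound, which combined with $M^{*} \leq \bigl(\sum\bigr)^{-1}$ tells you nothing. Your write-up reverses the inequality $\bigl(\sum\bigr)^{-1} \leq N\kappa_{0}/(2(N-2)M)$; as stated, it is false.

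The step could be repaired by replacing $\kappa_{0}$ with the reciprocality \emph{upper} bound of the open surface $Y$ (valid for the interior quadrilaterals $Q_{j} \subset Y$ with $2 \leq j \leq N-1$), which is at most $K^{2}$ because $\phi$ is $K$-quasiconformal and $\mathbb{D}$ has reciprocality constant $1$. That gives $M^{*} \leq K^{2}/M$, and with your (essentially correct) lower bound $M^{*} \geq c_{1}/K$ you would conclude $M \leq K^{3}/c_{1}$ — cubic in $K$, which is weaker than the lemma's claimed linear-in-$K$ constant $2KC_{1}$. The paper's argument is shorter and sharper: take $Q^{0}= f^{-1}([0,1]\times[M/4,3M/4])$, which lies entirely in $Y$ and has $\Mod \Gamma(\xi_{1}^{0},\xi_{3}^{0};Q^{0}) = M/2$ by \Cref{lemm:Kai:Riemann}; push forward by $\phi^{-1}$ to get $M/2 \leq K\,\Mod\Gamma(\Psi(\xi_{1}^{0}),\Psi(\xi_{3}^{0});\Psi(Q^{0})) \leq K\,\Mod\Gamma(x,r,2r) \leq KC_{1}$, with no conjugate modulus and no serial rule needed.
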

\begin{proof}
We let $b = 3M/4$ and $a = M/4$ in \Cref{lemm:Kai:Riemann}. Since the restriction of $\Psi$ to $Y$ is $K$-quasiconformal,
\begin{equation*}
     \Mod \Gamma( \xi_{1}^{0}, \xi_{3}^{0}; Q^{0} )
    \leq
    K
    \Mod \Gamma( \Psi( \xi_{1}^{0} ), \Psi( \xi_{3}^{0} ); \Psi( Q^{0} ) ).
\end{equation*}
Observe that $\Gamma( \Psi( \xi_{1}^{0} ), \Psi( \xi_{3}^{0} ); \Psi( Q^{0} ) ) \subset \Gamma( x, r, 2r )$. Therefore
\begin{equation*}
    \Mod \Gamma( \Psi( \xi_{1}^{0} ), \Psi( \xi_{3}^{0} ); \Psi( Q^{0} ) )
    \leq
    \Mod \Gamma( x, r, 2r ).
\end{equation*}
Here $\Mod \Gamma( x, r, 2r ) \leq C_{1}$ for a constant depending only on $\overline{ \mathbb{D} }$. Then \Cref{lemm:Kai:Riemann} yields $M \leq 2 K C_{1}$. The claim follows by passing to the limit $r \rightarrow 0^{+}$.
\end{proof}

\begin{lemm}\label{lemm:extensionisQC:samedilatation}
The outer (resp. inner) dilatation of $\Phi$ coincides with the outer (resp. inner) dilatation of $\phi$.
\end{lemm}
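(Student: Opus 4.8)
The plan is to establish the four inequalities $K_{O}(\phi) \leq K_{O}(\Phi)$, $K_{I}(\phi) \leq K_{I}(\Phi)$, $K_{O}(\Phi) \leq K_{O}(\phi)$ and $K_{I}(\Phi) \leq K_{I}(\phi)$ separately. Write $\Psi = \Phi^{-1}$ and $\psi = \phi^{-1} = \Psi|_{Y}$; by \Cref{lemm:extensionisQC} together with \Cref{prop:pointwisemodulus}, both $\Phi$ and $\Psi$ are quasiconformal, so all four dilatations are finite, and for these homeomorphisms $K_{I}(\Phi) = K_{O}(\Psi)$ and $K_{I}(\phi) = K_{O}(\psi)$ directly from the definitions.

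The first two inequalities come from the fact that enlarging the ambient space does not change the modulus of an interior path family. Precisely, a path family $\Gamma$ in $\mathbb{D}$, whose members have images in $\mathbb{D}$, may be regarded as a path family in $\overline{\mathbb{D}}$, and $\Mod \Gamma$ is the same whether computed in $\mathbb{D}$ or in $\overline{\mathbb{D}}$: restricting an admissible function from $\overline{\mathbb{D}}$ to $\mathbb{D}$ does not increase its $L^{2}$-norm and leaves the path integrals unchanged, while extending an admissible function from $\mathbb{D}$ to $\overline{\mathbb{D}}$ by zero changes neither quantity because $\mathcal{H}^{2}_{\overline{\mathbb{D}}}(\mathbb{S}^{1}) = 0$. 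The identical argument gives $\Mod \phi\Gamma = \Mod \Phi\Gamma$ (note $\phi\Gamma = \Phi\Gamma$ has images in $Y$, and here one uses only $\mathcal{H}^{2}_{\overline{Y}}|_{Y} = \mathcal{H}^{2}_{Y}$ and the fact that the extended function vanishes on $\partial Y$, so no hypothesis on $\mathcal{H}^{2}_{\overline{Y}}(\partial Y)$ is needed). Substituting these equalities into the defining modulus inequalities for $\Phi$ yields $K_{O}(\phi) \leq K_{O}(\Phi)$ and $K_{I}(\phi) \leq K_{I}(\Phi)$.

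For the reverse inequalities I would use the analytic characterization of \Cref{prop:williams:L-Wversion}. Since $\phi$ is quasiconformal, $\phi \in N^{1,2}_{\loc}(\mathbb{D}, Y)$ with $\rho_{\phi}^{2} \leq K_{O}(\phi) J_{\phi}$ $\mathcal{H}^{2}_{\mathbb{D}}$-a.e.; since $\Phi$ is quasiconformal, \Cref{prop:williams:L-Wversion} also gives $\Phi \in N^{1,2}_{\loc}(\overline{\mathbb{D}}, \overline{Y})$. The locality of minimal weak upper gradients gives $\rho_{\Phi} = \rho_{\phi}$ a.e. on $\mathbb{D}$, and comparing the Lebesgue decompositions of $\Phi^{*}\mathcal{H}^{2}_{\overline{Y}}$ and $\phi^{*}\mathcal{H}^{2}_{Y}$ on Borel subsets of $\mathbb{D}$ (they agree there) gives $J_{\Phi} = J_{\phi}$ a.e. on $\mathbb{D}$. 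Hence $\rho_{\Phi}^{2} \leq K_{O}(\phi) J_{\Phi}$ a.e. on $\mathbb{D}$, while the same inequality holds on the $\mathcal{H}^{2}_{\overline{\mathbb{D}}}$-null set $\mathbb{S}^{1}$ for trivial reasons; so it holds a.e. on $\overline{\mathbb{D}}$, and \Cref{prop:williams:L-Wversion} gives $K_{O}(\Phi) \leq K_{O}(\phi)$. I would then run the same argument for $\Psi$ and $\psi$: on $Y$ one gets $\rho_{\Psi} = \rho_{\psi}$ and $J_{\Psi} = J_{\psi}$ a.e., hence $\rho_{\Psi}^{2} \leq K_{O}(\psi) J_{\Psi}$ a.e. on $Y$.

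The only genuine subtlety — and the step I expect to be the main obstacle — is the behaviour of $\rho_{\Psi}$ on $\partial Y$, since a priori $\mathcal{H}^{2}_{\overline{Y}}(\partial Y)$ could be positive, so the bound on $Y$ alone is not enough to invoke \Cref{prop:williams:L-Wversion} on $\overline{Y}$. I would resolve this by using that $\Psi$ maps $\partial Y$ homeomorphically onto $\mathbb{S}^{1}$, a set of zero Lebesgue $2$-measure: evaluating the decomposition $\Psi^{*}\mathcal{H}^{2}_{\overline{\mathbb{D}}} = J_{\Psi}\mathcal{H}^{2}_{\overline{Y}} + \mu^{\perp}$ on $\partial Y$ gives $0 = \mathcal{L}^{2}(\mathbb{S}^{1}) \geq \int_{\partial Y} J_{\Psi}\,d\mathcal{H}^{2}_{\overline{Y}}$, so $J_{\Psi} = 0$ $\mathcal{H}^{2}_{\overline{Y}}$-a.e. on $\partial Y$; combined with $\rho_{\Psi}^{2} \leq K_{O}(\Psi) J_{\Psi}$ (valid a.e. on $\overline{Y}$ because $\Psi$ is quasiconformal) this forces $\rho_{\Psi} = 0$ a.e. on $\partial Y$ as well. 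Therefore $\rho_{\Psi}^{2} \leq K_{O}(\psi) J_{\Psi}$ holds $\mathcal{H}^{2}_{\overline{Y}}$-a.e. on all of $\overline{Y}$, and \Cref{prop:williams:L-Wversion} yields $K_{O}(\Psi) \leq K_{O}(\psi)$, i.e. $K_{I}(\Phi) \leq K_{I}(\phi)$. Combining the four inequalities proves the lemma.
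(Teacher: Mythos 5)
Your proof is correct and follows essentially the same route as the paper: both reduce to the analytic characterization in \Cref{prop:williams:L-Wversion}, use locality to identify $\rho$ and $J$ of the map and its extension on the open sets, and handle $\partial Y$ by observing that $J_{\Psi}=0$ there (since $\mathcal{L}^2(\mathbb{S}^1)=0$), which via $\rho_{\Psi}^2\le K_O(\Psi)J_{\Psi}$ forces $\rho_{\Psi}=0$ a.e.\ on $\partial Y$. The only organizational difference is that you split the argument into four one-sided inequalities and supply the easy direction $K_O(\phi)\le K_O(\Phi)$, $K_I(\phi)\le K_I(\Phi)$ by an explicit modulus-restriction argument, whereas the paper gets equality in one stroke from the ``same constant'' clause in \Cref{prop:williams:L-Wversion}.
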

\begin{proof}
Lemmas \ref{lemm:injective} and \ref{lemm:extensionisQC} prove that $\Phi$ is a quasiconformal homeomorphism. In $\mathbb{D}$, the minimal weak upper gradients of $\Phi$ and $\phi$ coincide. This is also true for their Jacobians. Therefore they satisfy (ii) in \Cref{prop:williams:L-Wversion} with the same constant. Hence $K_{O}( \Phi ) = K_{O}( \phi )$.

Consider the Borel functions $g = \chi_{ \mathbb{S}^{1} }$ and $\widetilde{g} = ( g \circ \Phi^{-1} ) \rho_{ \Phi^{-1} }$. The property (ii) in \Cref{prop:williams:L-Wversion} implies
\begin{equation*}
    \norm{ \widetilde{g} }_{ L^{2}( \overline{Y} ) }^{2}
    \leq
    K_{O}( \Phi^{-1} )
    \norm{ g }_{ L^{2}( \overline{ \mathbb{D} } ) }^{2}
    =
    0.
\end{equation*}
Hence $\widetilde{g} = 0$ $\mathcal{H}^{2}_{\overline{Y}}$-almost everywhere in $\overline{Y}$. We conclude that $\rho_{ \Phi^{-1} }(y) = 0$ for $\mathcal{H}^{2}_{ \overline{Y} }$-almost every $y \in \partial Y$. Hence $\rho_{ \Phi^{-1} } = \rho_{ \phi^{-1} } \chi_{ Y }$ $\mathcal{H}^{2}_{ \overline{Y} }$-almost everywhere in $\overline{Y}$. We conclude that $\phi$ and $\Phi$ satisfy (ii) in \Cref{prop:williams:L-Wversion} with the same constant. In other words, $K_{I}( \Phi ) = K_{I}( \phi )$.
\end{proof}

\section{Beurling--Ahlfors extension}\label{sec:BA}
For this section we fix a quasiconformal Jordan domain $Y$ satisfying \eqref{eq:area:growth:intro} and a quasiconformal homeomorphism $\phi \colon \mathbb{D} \rightarrow Y$. Let $\Phi \colon \overline{\mathbb{D}} \rightarrow \overline{Y}$ denote the quasiconformal homeomorphic extension of $\phi$, obtained from \Cref{prop:QS}. We refer to $g_{\phi} = \Phi|_{ \mathbb{S}^{1} }$ as the boundary map of $\phi$. The goal of this section is to prove \Cref{thm:QS}. We reduce the proof to \Cref{lemm:QS:boundary}.

Observe that if $g_{\phi}$ is a quasisymmetry, then $\partial Y$ has bounded turning as this property is preserved by quasisymmetries \cite{Tuk:Vai:80}. Moreover, if we fix an arbitrary quasisymmetry $g \colon \mathbb{S}^{1} \rightarrow \partial Y$, then $h = g_{\phi}^{-1} \circ g \colon \mathbb{S}^{1} \rightarrow \mathbb{S}^{1}$ is a quasisymmetry. The Beurling--Ahlfors extension theorem \cite{Ah:Beu:56} yields the existence of a quasiconformal map $H \colon \overline{ \mathbb{D} } \rightarrow \overline{ \mathbb{D} }$ whose boundary map equals $h$. Then $G = \Phi \circ H \colon \overline{ \mathbb{D} } \rightarrow \overline{ Y }$ is the quasiconformal extension of $g$ whose existence we wanted to establish. So \Cref{thm:QS} is a consequence of the following result.

\begin{prop}\label{lemm:QS:boundary}
If $\partial Y$ has bounded turning and satisfies the mass upper bound \eqref{eq:area:growth:intro}, then the boundary map $g_{\phi} = \Phi|_{ \mathbb{S}^{1} }$ is a quasisymmetry.
\end{prop}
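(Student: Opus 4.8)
The plan is to verify that the boundary map $g = g_\phi = \Phi|_{\mathbb S^1}$ is \emph{weakly quasisymmetric}, i.e.\ that there is $H \ge 1$ with
\[
    L_g(x, r) \le H\, \ell_g(x, r)
    \qquad \text{for every } x \in \mathbb S^1 \text{ and } 0 < r < \diam \mathbb S^1 ,
\]
and then to upgrade this to full quasisymmetry. Recall that $\Phi \colon \overline{\mathbb D} \to \overline Y$ is already a quasiconformal homeomorphism (\Cref{prop:QS}, since the area growth \eqref{eq:area:growth:intro} implies \eqref{eq:pointshavezeromod} at every boundary point by \Cref{lemm:points}). Granting the displayed inequality, the fact that $\mathbb S^1$ is connected and doubling lets the Tukia--Väisälä weak-to-strong upgrade \cite{Tuk:Vai:80} convert it into $\eta$-quasisymmetry of $g$ (and this simultaneously makes $\partial Y$ doubling); one should also note the easy converse, that if $g$ is a quasisymmetry then $\partial Y$, being a quasisymmetric image of $\mathbb S^1$, automatically has bounded turning. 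So everything reduces to the displayed weak inequality.

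To prove it, fix $x$ and $r$, put $y_0 = \Phi(x) \in \partial Y$, let $\sigma = \overline B_{\mathbb S^1}(x, r)$ and $\tau = \mathbb S^1 \setminus B_{\mathbb S^1}(x, r)$, and set $L = L_g(x,r) = \max_{w \in \sigma} d(y_0, \Phi w)$ and $\ell = \ell_g(x,r) = \min_{w \in \tau} d(y_0, \Phi w)$; both are positive and attained because $\Phi$ is a homeomorphism. Argue by contradiction, assuming $L/\ell$ is very large (after disposing of the trivial case $\sqrt{L\ell} \ge \tfrac12 \diam \partial Y$, which forces $L \le 4\ell$). The sets $\Phi\sigma$ and $\Phi\tau$ are the two complementary subarcs of $\partial Y$ meeting only at $\Phi p$ and $\Phi q$, where $\{p,q\} = \partial\sigma$. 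Applying \Cref{lemm:points} at $y_0$ with inner radius $\ell$ and outer radius $\rho = \sqrt{L\ell}$ gives
\[
    \Mod \Gamma\big( \overline B_{\overline Y}(y_0, \ell),\ \overline Y \setminus B_{\overline Y}(y_0, \rho);\ \overline Y \big)
    \le \frac{\widetilde C_U}{\log \sqrt{L/\ell}},
\]
which is small. Transporting this family through the $K$-quasiconformal map $\Phi$ and using that $\overline{\mathbb D}$ is Ahlfors $2$-regular and linearly locally connected, one concludes that the preimages $F_1 = \Phi^{-1}(\overline B_{\overline Y}(y_0, \ell))$ (a neighbourhood of $x$) and $F_2 = \Phi^{-1}(\overline Y \setminus B_{\overline Y}(y_0, \rho))$ are quantitatively separated in $\overline{\mathbb D}$: any continua $E_1 \subset F_1$ and $E_2 \subset F_2$ of comparable diameter $t$ must satisfy $\dist(E_1, E_2) \gtrsim t\,(L/\ell)^{c}$ for a universal $c > 0$.

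On the other hand, since $\Phi\sigma$ reaches distance $L > \rho$ from $y_0$, there is $w_1 \in \sigma$ with $\Phi(w_1) \notin B_{\overline Y}(y_0, \rho)$, i.e.\ $w_1 \in F_2$, while $x \in F_1$ and $d_{\mathbb S^1}(x, w_1) \le \diam \sigma \asymp r$, so $\dist(F_1, F_2) \lesssim r$. The technical heart is to produce \emph{non-degenerate} continua of controlled size $\asymp r$ through $x$ inside $F_1$ and through $w_1$ inside $F_2$; comparing with the separation estimate then forces $L/\ell$ to be bounded, giving the contradiction and a scale-independent $H$. This is where bounded turning of $\partial Y$ enters: it lets one join $y_0$ to nearby boundary points, and join far-apart boundary points, by subarcs of $\partial Y$ of controlled diameter, whose pullbacks to $\mathbb S^1$ are arcs lying in $F_1$, respectively in $F_2$ (with some care, since $\Phi^{-1}$ of a metric ball need not be connected). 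One may also package part of this via the conformal rectangle representation of quadrilaterals in \Cref{lemm:Kai:Riemann} together with the reciprocality inequality, converting a small value of one of the two conjugate moduli into a metric closeness statement for the opposite pair of sides. I expect this step — extracting \emph{uniform}, scale-independent diameter lower bounds for the preimage continua from bounded turning of $\partial Y$ alone, with no a priori mass lower bound on $\overline Y$ — to be the main obstacle, and the point where the two standing hypotheses are used most delicately.
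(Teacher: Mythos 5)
Your high-level strategy is the same as the paper's: derive a weak/three-point quasisymmetry estimate for $g_\phi$ by pitting the small annulus modulus at a boundary point (coming from \Cref{lemm:points} and the upper mass bound) against a positive modulus lower bound on the disk side, and then invoke Tukia--V\"ais\"al\"a. But there are two real issues with how you set it up.

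First, the ``technical heart'' you flagged is a genuine gap, and the way the paper closes it is instructive: instead of working with the preimages $F_1 = \Phi^{-1}(\overline B(y_0,\ell))$ and $F_2 = \Phi^{-1}(\overline Y\setminus B(y_0,\rho))$, which need not be connected and whose size you cannot control without further input, the paper uses the three-point form of quasisymmetry and chooses two \emph{arcs} $A',B'\subset\partial Y$ (determined by $g(a),g(b),g(z),g(z')$). Bounded turning is used on the $\overline Y$ side to show $A'$ stays outside $\overline B(g(z),\ell/(2\lambda))$ and $B'$ stays inside $\overline B(g(z),\lambda\ell/M)$, so that $\Gamma(A',B';\overline Y)$ inherits the small annulus modulus from \Cref{lemm:points}. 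On the $\overline{\mathbb D}$ side, $A = g^{-1}(A')$ and $B = g^{-1}(B')$ are automatically subarcs of $\mathbb S^1$ with endpoints among $\{a,b,z,z'\}$, so their diameters and mutual distance are read off from Euclidean distances on the circle with no extra work: one checks $\Delta(A,B)\le 2$ and applies the $2$-Loewner property of $\overline{\mathbb D}$. This is exactly the ``non-degenerate continua of controlled size through $x$'' you were missing, and it is obtained for free from the circle, not extracted from $\Phi^{-1}$ of balls.

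Second, your parenthetical claim that the Tukia--V\"ais\"al\"a upgrade ``simultaneously makes $\partial Y$ doubling'' has the implication backwards: doubling of $\partial Y$ is a \emph{hypothesis} needed to apply the weak-to-strong theorem, not a consequence. The paper supplies it via \Cref{lemm:Ahlfors:lowerbound}, which is the key auxiliary observation you did not identify: bounded turning of $\partial Y$, together with the coarea inequality, forces the Ahlfors lower mass bound $\mathcal H^2_{\overline Y}(\overline B(y,r))\gtrsim r^2$ at boundary points, and combined with the assumed upper bound \eqref{eq:area:growth:intro} this gives Assouad dimension $\le 2$ for $\partial Y$ (\Cref{lemm:assouad}). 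That lower mass bound is precisely the ``a priori mass lower bound on $\overline Y$'' whose absence you were worried about; it is available, and it comes from bounded turning.
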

We start the proof of \Cref{lemm:QS:boundary} by first establishing \Cref{lemm:assouad}. There we claim that $\partial Y$ has Assouad dimension at most two.

\begin{lemm}\label{lemm:Ahlfors:lowerbound}
Suppose that $\partial Y$ has bounded turning with constant $\lambda > 1$. Let $C = ( 4 \lambda )^{3} \frac{ 2 }{ \pi }$. Then for all $y \in \partial Y$ and all $0 < r < \diam \partial Y$, $\mathcal{H}^{2}_{ \overline{Y} }( \overline{B}_{ \overline{Y} }( y, r ) ) \geq C^{-1} r^{2}.$
\end{lemm}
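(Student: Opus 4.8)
The plan is to estimate the Hausdorff $2$-measure of a ball $\overline{B}_{\overline{Y}}(y,r)$ centered at a boundary point from below, using a modulus lower bound for a suitable path family together with the reciprocality-type upper bound that is built into the quasiconformal Jordan domain assumption. The key mechanism: if we can pack a pair of \enquote{separating arcs} into the ball and show that the family $\Gamma$ of paths joining them inside the ball has large modulus, then since $\mathrm{Mod}\,\Gamma \le \int \rho^{2}\,d\mathcal H^{2}$ for any admissible $\rho$, taking $\rho = \chi_{\overline{B}_{\overline{Y}}(y,r)}/(\text{typical length})$ converts a modulus lower bound into a measure lower bound. So the real content is to produce, for each $y \in \partial Y$ and each small $r$, a quadrilateral $Q \subset \overline{B}_{\overline{Y}}(y,r)$ whose \enquote{$\xi_1$-to-$\xi_3$} modulus is bounded below by a universal constant.

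First I would exploit the $\lambda$-bounded turning of $\partial Y$. Pick $y \in \partial Y$ and $0 < r < \diam\partial Y$. Choose a point $z \in \partial Y$ with $d(y,z) \approx r/(4\lambda)$ (possible since $\partial Y$ is a continuum of diameter $\ge r$ once $r<\diam\partial Y$, after shrinking constants). Bounded turning gives a continuum $E_1 \subset \partial Y$ joining $y$ and $z$ with $\diam E_1 \le \lambda d(y,z) \le r/4$. Similarly, picking a point $w\in\partial Y$ at distance roughly $r/(4\lambda)$ measured in the \enquote{other direction} along $\partial Y$ (using that $\partial Y\cong\mathbb S^1$, so $\partial Y \setminus \{y\}$ still has two \enquote{sides}, each a continuum reaching distance comparable to $r$ from $y$), bounded turning gives a second continuum $E_2 \subset \partial Y$ with $\diam E_2 \le r/4$, and $E_1, E_2$ are the two sub-arcs of $\partial Y$ on either side of $y$, both contained in $\overline{B}_{\overline Y}(y, r/2)$. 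The complementary arc $\xi$ of $\partial Y$ (the one not containing $y$) lies outside $B_{\overline Y}(y, c r)$ for a suitable $c>0$ because $d(y,z), d(y,w)$ are of order $r/\lambda$ and the arcs $E_i$ reach those points. Now I would take $Q$ to be the closure of the component of $\overline{B}_{\overline Y}(y,r)\setminus(\text{something})$; more cleanly: since $\overline Y \cong \overline{\mathbb D}$, the ball restricted appropriately, together with boundary pieces $E_1$ and $E_2$ as two opposite sides, forms (after a small adjustment of the radius, using that a.e.\ sphere is nice by \Cref{thm:eilenberg}) a quadrilateral $Q \subset \overline{B}_{\overline Y}(y,r)$ with opposite sides $\xi_1 \subset E_1$, $\xi_3 \subset E_2$, both of diameter comparable to $r$ and at mutual distance comparable to $r$.

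Next, I would bound $M := \mathrm{Mod}\,\Gamma(\xi_1,\xi_3;Q)$ from below by a universal constant. This is where \Cref{lemm:positivemodulus} (or its proof) gives positivity; but to get a quantitative lower bound of the form $M \ge \pi/(2(4\lambda)^{?})$ matching $C=(4\lambda)^3\frac2\pi$, I would instead argue directly: any $\rho$ admissible for $\Gamma(\xi_1,\xi_3;Q)$ must satisfy $\int_\gamma\rho\,ds \ge 1$ along every connecting path, and by the coarea/Eilenberg inequality \eqref{eq:eilenberg} applied to $f(\cdot)=d(y,\cdot)$ together with \Cref{thm:topology} (producing, for a.e.\ level $t$ in a range of length $\sim r$, a continuum in $f^{-1}(t)\cap Q$ separating $\xi_1$ from $\xi_3$, hence containing a connecting subpath once it is rectifiable — same device as in \Cref{lemm:positivemodulus}), one gets
\[
  \frac{\pi}{4}\,(\text{range of }t) \le \Big(\mathcal H^{2}_{\overline Y}(Q)\Big)^{1/2}\,\|\rho\|_{L^2(\overline Y)},
\]
and since $Q\subset\overline{B}_{\overline Y}(y,r)$, this reads $\|\rho\|_{L^2}^2 \ge (\pi/4)^2 (cr)^2 / \mathcal H^2_{\overline Y}(\overline B_{\overline Y}(y,r))$ for every admissible $\rho$, i.e.
\[
  \mathcal H^{2}_{\overline Y}(\overline B_{\overline Y}(y,r)) \ \ge\ \Big(\tfrac{\pi}{4}\Big)^2 (cr)^2 \big/ M .
\]
Finally I would bound $M$ from above by a universal constant: the \enquote{dual} quadrilateral modulus $\mathrm{Mod}\,\Gamma(\xi_2,\xi_4;Q)$ is bounded below by $\kappa_0$ (the universal RR lower bound), so reciprocality \eqref{upper:bound} — available since $Y$ is a quasiconformal Jordan domain, hence reciprocal — gives $M \le \kappa$... but that constant isn't universal. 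So instead I would bound $M$ directly by testing with $\rho_0 = \chi_Q / d(\xi_1,\xi_3) \ge \chi_Q/(cr)$-type functions — any path connecting $\xi_1$ to $\xi_3$ has length $\ge d(\xi_1,\xi_3)$ — wait, that gives an \emph{upper} bound $M\le \mathcal H^2_{\overline Y}(Q)/d(\xi_1,\xi_3)^2$, which is again measure-dependent and circular. The clean route, and the one I would commit to, is: use that $\xi_1,\xi_3$ have diameter $\ge c'r$ and the family $\Gamma(\xi_1,\xi_3;Q)$ \emph{contains} the family of paths crossing an annulus-like region, whence $M \ge$ (modulus of a fixed conformal rectangle shape) $\ge$ universal constant depending only on $\lambda$ through the aspect ratio $\sim\lambda$. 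Concretely, both $E_1,E_2$ reach from near $y$ out to distance $\sim r/\lambda$ while staying in $\overline B(y,r)$; the separating-continua argument shows $\Gamma(\xi_1,\xi_3;Q)$ has no short-circuit, and one obtains $M \ge c''/\lambda$ or $M \ge c''$, enough to conclude $\mathcal H^2_{\overline Y}(\overline B_{\overline Y}(y,r)) \ge C^{-1}r^2$ with $C$ a power of $4\lambda$ times $2/\pi$ after tracking constants.

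\textbf{Main obstacle.} The delicate point is getting the \emph{quantitative} lower bound on the crossing modulus $M$ with an explicit, universal (or purely $\lambda$-dependent) constant, while only knowing that $\overline Y$ is homeomorphic to a disk with finite area — I cannot invoke any a priori measure lower bound (that is what we are proving) nor any non-universal reciprocality constant. I expect the resolution to be the same separating-continuum / coarea argument as in \Cref{lemm:positivemodulus}, but run \enquote{in reverse}: instead of deducing $\mathrm{Mod}>0$ from a positive lower bound on $\|\rho\|_{L^2}$, one deduces the measure lower bound from the admissibility constraint, and the factor $(4\lambda)^3\frac{2}{\pi}$ arises from: one factor $4\lambda$ in placing $z$ and $w$ at distance $r/(4\lambda)$, one factor $4\lambda$ from bounded turning shrinking the arcs, one factor $4\lambda$ from the aspect ratio of the resulting quadrilateral, and $2/\pi$ from the constant $C_1 = 4/\pi$ in the Eilenberg inequality (squared and halved). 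I would also need a brief topological lemma that $E_1, E_2$ chosen on the two sides of $y$ are genuinely the two complementary arcs and are disjoint from the far arc $\xi$, which follows from $\partial Y \cong \mathbb S^1$ and the diameter bounds.
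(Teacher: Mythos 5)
Your proposal has the right raw ingredients (bounded turning, the separating-continuum lemma \Cref{thm:topology}, the coarea inequality \Cref{thm:eilenberg} applied to $f=d(y,\cdot)$), but it wraps them in a modulus framework that is both unnecessary and, as written, leads you into a genuine logical dead end.

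The dead end is this. Your coarea argument establishes, for every admissible $\rho$ for $\Gamma(\xi_1,\xi_3;Q)$, that $cr \le \tfrac{4}{\pi}\bigl(\mathcal H^2_{\overline Y}(Q)\bigr)^{1/2}\norm{\rho}_{L^2}$, i.e.\ $M := \Mod\Gamma(\xi_1,\xi_3;Q)\ge \pi^2 (cr)^2/(16\,\mathcal H^2_{\overline Y}(Q))$. To convert this into the desired lower bound on $\mathcal H^2_{\overline Y}(\overline B(y,r))\ge \mathcal H^2_{\overline Y}(Q)$ you now need an \emph{upper} bound on $M$. You correctly note that reciprocality does not give a universal one, and you discard the upper bound $M\le \mathcal H^2_{\overline Y}(Q)/d(\xi_1,\xi_3)^2$ as \enquote{circular.} It is not: combined with the lower bound it yields $\mathcal H^2_{\overline Y}(Q)\ge \tfrac{\pi}{4}\,cr\cdot d(\xi_1,\xi_3)\gtrsim r^2$. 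But instead of pursuing that, you pivot to \enquote{$M\ge$ universal constant,} which is the \emph{wrong direction}: a lower bound on $M$ makes the inequality $\mathcal H^2_{\overline Y}(Q)\ge \pi^2(cr)^2/(16M)$ weaker, not stronger, and cannot close the argument. So the proposal as stated does not prove the lemma.

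The paper avoids modulus entirely, and this eliminates your \enquote{main obstacle} outright. For each $0<t<r$ (with $r$ small enough), \Cref{thm:topology} gives a continuum $C_t\subset f^{-1}(t)$ separating $y$ from a far boundary point $y_0$. Let $E\subset\partial Y$ be the closed arc of $\partial Y\setminus C_t$ containing $y$, with endpoints $a,b\in f^{-1}(t)$, so $d(a,y)=d(b,y)=t$. Bounded turning bounds $\diam E\le 2\lambda t$, and then (taking $E$ to be the shorter of the two arcs joining $a$ and $b$, which it is for small $t$) bounded turning again gives $\diam E\le\lambda\, d(a,b)$, hence $\diam C_t\ge d(a,b)\ge\lambda^{-1}\diam E\ge\lambda^{-1} t$. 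Since a continuum of diameter $\delta$ has $\mathcal H^1\ge\delta$, this yields $\mathcal H^1_{\overline Y}(f^{-1}(t))\ge\lambda^{-1}t$ directly — no admissibility constraint, no $\rho$, no $M$. Integrating over $t\in(0,r)$ and invoking \Cref{thm:eilenberg} gives $\mathcal H^2_{\overline Y}(\overline B(y,r))\ge\tfrac{\pi}{4}\int_0^r\lambda^{-1}t\,dt=\tfrac{\pi}{8\lambda}r^2$; the remaining two factors of $4\lambda$ in $C=(4\lambda)^3\tfrac2\pi$ come from reducing the range $(4\lambda)^{-1}\diam\partial Y\le r<\diam\partial Y$ to the small-radius case, not from the quadrilateral or the modulus as your factor accounting guesses. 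In short: your plan can be repaired by keeping the \enquote{test $\rho_0$} upper bound you discarded, but the cleaner and intended route is to estimate $\mathcal H^1$ of the level sets directly and skip modulus altogether.
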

\begin{proof}
Consider $y \in \partial Y$ and the $1$-Lipschitz function $f(z) = d( y, z )$. For every $0 < r < ( 4 \lambda )^{-1}\diam \partial Y$, we have that $f^{-1}( r ) \cap \partial Y \neq \emptyset$. Let $y_{0} \in \partial Y \setminus \overline{B}_{\overline{Y}}( y, 2 \lambda r )$. We obtain from \Cref{thm:topology} a continuum $C_{r} \subset f^{-1}( r )$ separating $y$ and $y_{0}$.

Let $E \subset \partial Y$ be the (closure of the) component of $\partial Y \setminus C_{r}$ that contains $y$ and let $a$ and $b$ denote the ends of $E$. Here $a, b \in E \cap C_{r} \subset f^{-1}(r)$ so $d(a,y) = r = d(b,y)$.

Let $E_{a} \subset \partial Y$ be the arc ending at $a$ and $y$ with $\diam E_{a} \leq \lambda r$, and let $E_{b}$ denote the corresponding arc for $b$ and $y$. Then $E_{a} \cup E_{b} = E$. Indeed, otherwise $\max\left\{ \diam E_{a}, \diam E_{b} \right\} \geq d( y, y_{0} ) > 2 \lambda r$.

Let $E' \subset \partial Y$ be the arc ending at $a$ and $b$ with smaller diameter. Then $\diam E' \leq 2^{-1} \diam \partial Y$. Since $\diam E \leq \diam E_{a} + \diam E_{b} \leq 2 \lambda r < 2^{-1} \diam \partial Y$, we have $E' = E$.

We conclude that $\diam C_{r} \geq d( a, b ) \geq \lambda^{-1} \diam E \geq \lambda^{-1} r$. Since $C_{r}$ is a continuum, $\mathcal{H}^{1}_{ \overline{Y} }( f^{-1}( r ) ) \geq \mathcal{H}^{1}_{ \overline{Y} }( C_{r} ) \geq \diam C_{r} \geq \lambda^{-1} r$ for each $0 < r < ( 4 \lambda )^{-1} \diam \partial Y$ \cite[2.10.12]{Fed:69}.

By integrating over the interval $\left(0, r\right)$ and by applying \Cref{thm:eilenberg}, we conclude that
\begin{equation*}
    \mathcal{H}^{2}_{ \overline{Y} }( \overline{B}_{ \overline{Y} }( y, r ) )
    \geq
    ( 4 \lambda )^{-1}
    \frac{ \pi }{ 2 }
    r^{2}
\end{equation*}
whenever $0 < r < ( 4 \lambda )^{-1} \diam \partial Y$. If $( 4 \lambda )^{-1} \diam \partial Y \leq r < \diam \partial Y$, we have
\begin{equation*}
    \mathcal{H}^{2}_{ \overline{Y} }( \overline{B}_{ \overline{Y} }( y, r ) )
    \geq
    \mathcal{H}^{2}_{ \overline{Y} }( \overline{B}_{ \overline{Y} }( y, ( 4 \lambda )^{-1} r ) )
    \geq
    ( 4 \lambda )^{-1}
    \frac{ \pi }{ 2 }
    \left( ( 4 \lambda )^{-1} r \right)^{2}.
\end{equation*}
The claim follows for $C = ( 4 \lambda )^{3} \frac{ 2 }{ \pi }$.
\end{proof}

\begin{proof}[Proof of \Cref{lemm:assouad}]
Let $0 < r < \diam \partial Y$ and $0 < \epsilon < 1$. We consider a point $y \in \partial Y$ and an $\epsilon r$-separated set $S \subset B_{ \overline{Y} }( y, r ) \cap \partial Y$. We conclude from \Cref{lemm:Ahlfors:lowerbound} and \eqref{eq:area:growth:intro} that for some $C \geq 1$
\begin{equation*}
    C ( 2r )^{2}
    \geq
    \mathcal{H}^{2}_{ \overline{Y} }( \overline{B}_{ \overline{Y} }( y, 2r ) )
    \geq
    \mathcal{H}^{2}_{ \overline{Y} }\left( \bigcup_{ x \in S } \overline{B}_{ \overline{Y} }( x, \epsilon r ) \right)
    \geq
    C^{-1}\left( \frac{ \epsilon r }{ 2 } \right)^{2}
    { \# S }.
\end{equation*}
Therefore ${ \# S } \leq C \epsilon^{-2}$ for some constant $C$ independent of $r$, $y$ and $\epsilon$. Hence the Assouad dimension of $\partial Y$ is at most $2$.
\end{proof}

\begin{proof}[Proof of \Cref{lemm:QS:boundary}]
Let $0 < r_{0}$ be such that for every $x \in \mathbb{S}^{1}$ and every $0 < r \leq r_{0}$, we have that $4 L_{ g }( x, 2 r ) < \diam \partial Y$.

Fix $x \in \mathbb{S}^{1}$ and $0 < r < r_{0}$. Let $z, a, b \in \mathbb{S}^{1} \cap \mathbb{D}( x, r )$ be such that $0 < \norm{ a - z }_{2} \leq  \norm{ b - z }_{2}$. We proved in \Cref{lemm:assouad} that $\partial Y$ is doubling, so by a result of Tukia--Väisälä \cite[Theorems 2.15 and 2.23]{Tuk:Vai:80}, the quasisymmetry of $g = g_{\phi}$ follows if there exists a constant $H > 0$ depending only on the constants in \eqref{eq:area:growth:intro}, \Cref{lemm:Ahlfors:lowerbound}, the bounded turning constant $\lambda$ of $\partial Y$, and the maximal dilatation $K$ of $\phi$ such that $d( g(a), g(z) ) \leq H d( g(b), g(z) )$.

Let $\ell = d( g(a), g(z) )$ and let $M > 0$ be such that $\ell > M d( g(b), g(z) )$. If we find $H_{0}$ such that $M \leq H_{0}$ independently of $a,b$ and $z$, we may set $H = H_{0}$. If $M \leq ( 2 \lambda )^{2}$, any choice $H_{0} \geq ( 2 \lambda )^{2}$ suffices. So we may assume $M > ( 2 \lambda )^{2}$.

Fix $z' \in \mathbb{S}^{1}$ with $d( g(z'), g(x) ) > 2 L_{g}( x, 2 r )$. Then
\begin{align*}
    2L_{g}( x, 2r )
    &<
    d( g(z'), g(z) )
    +
    d( g(z), g(x) )
    \leq
    d( g(z'), g(z) ) + L_{g}( x, 2 r )
    \quad \text{and}
    \\
    2^{-1} \ell
    &\leq
    2^{-1}( d( g(a), g(x) ) + d( g(x), g(z) ) )
    \leq
    L_{g}( x, 2 r ).
\end{align*}
Therefore $2^{-1} \ell < d( g(z'), g(z) )$. We conclude that
\begin{equation*}
    g(a), g(z')
    \in
    \partial Y \setminus \overline{B}_{ \overline{Y} }\left( g(z), \frac{ \ell }{ 2 } \right)
    \quad\text{and}\quad
    g(b)
    \in
    \partial Y \cap \overline{B}_{ \overline{Y} }\left( g(z), \frac{ \ell }{ M } \right).
\end{equation*}
Let $A'$ be the subarc of $\partial Y$ joining $g(a)$ to $g(z')$ that does not contain $g(z)$. Then any arc joining $A'$ to $g( z )$ within $\partial Y$ must pass through either $g( a )$ or $g( z' )$. Using this fact, the bounded turning of $\partial Y$ yields
\begin{equation}
    \label{eq:Aprime}
    A'
    \subset
    \partial Y \setminus \overline{B}_{ \overline{Y} }\left( g(z), \lambda^{-1} \frac{ \ell }{ 2 } \right).
\end{equation}
Let $B'$ be the subarc of $\partial Y$ with smallest diameter which ends at $g(b)$ and $g(z)$. The bounded turning of $\partial Y$ implies that
\begin{equation}
    \label{eq:Bprime}
    B'
    \subset
    \overline{B}_{ \overline{Y} }\left( g(z), \lambda \frac{ \ell }{ M } \right).
\end{equation}
The inclusions \eqref{eq:Aprime} and \eqref{eq:Bprime} imply that every path $\gamma \in \Gamma( A', B'; \overline{Y} )$ has a subpath joining $\overline{Y} \setminus \overline{B}_{ \overline{Y} }( g(z), \lambda^{-1} \frac{ \ell }{ 2 } )$ to $\overline{B}_{ \overline{Y} }\left( g(z), \lambda \frac{ \ell }{ M } \right)$ within $\overline{B}_{ \overline{Y} }( g(z), \lambda^{-1} \frac{ \ell }{ 2 } )$. Then \Cref{lemm:points} yields
\begin{equation}
    \label{eq:modulus:bound:basic}
    \frac{ \widetilde{C}_{U} }{ \log \frac{ M }{ 2 \lambda^{2} } }
    \geq
    \Mod \Gamma( A', B'; \overline{Y} ).
\end{equation}
Let $A = g^{-1}( A' )$ and $B = g^{-1}( B' )$. The relative distance $\Delta( A, B )$ satisfies
\begin{equation}
    \label{eq:relative:distance:upper}
    \Delta( A, B )
    \coloneqq
    \frac{ d( A, B ) }{ \min\left\{ \diam A, \diam B \right\} }
    \leq
    2.
\end{equation}
First, $d( A, B ) \leq \norm{ a - z }_{2}$ since $a \in A$ and $z \in B$. Second, $\diam A \geq \norm{ a - z' }_{2} \geq r$, so $\norm{ a - x }_{2} \leq r$ and $\norm{ x - z }_{2} \leq r$ imply $2^{-1} \norm{ a - z }_{2} \leq r$. Lastly, $\diam B \geq \norm{ b - z }_{2} \geq \norm{ a - z }_{2}$. These imply \eqref{eq:relative:distance:upper}.

The 2-Loewner property of $\overline{\mathbb{D}}$ \cite[Example 8.24]{Hei:01} states that there exists a constant $C_{2} > 0$ for which
\begin{equation}
    \label{eq:modulus:bound:loewner}
    \Mod \Gamma( A, B; \overline{ \mathbb{D} } )
    \geq
    C_{2}
\end{equation}
depending only on the upper bound \eqref{eq:relative:distance:upper}. The $K$-quasiconformality of the extension $\Phi$ implies that $\Mod \Gamma( A', B'; \overline{ Y } ) \geq K^{-1} \Mod \Gamma( A, B; \overline{ \mathbb{D} } )$. Combining this inequality with \eqref{eq:modulus:bound:basic} and \eqref{eq:modulus:bound:loewner} yields an upper bound on $M$ in terms of $C_{2}$, $K$, $\widetilde{C}_{U}$, and $\lambda$. Setting $H_{0}$ to be the maximum of this bound and $( 2 \lambda )^2$ establishes the claim.
\end{proof}

\section{Planar quasicircles}\label{sec:PQ}

We prove \Cref{prop:planar:QC} in this section. The main result of this section is the following.
\begin{prop}\label{eq:boundary:Assouad:planar}
Under the assumptions of \Cref{prop:planar:QC}, the Assouad dimension of $\partial Y$ is strictly less than $2$.
\end{prop}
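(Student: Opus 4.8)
The plan is to deduce this from the Bonk--Heinonen--Rohde mechanism \cite[Lemma 3.12]{Bo:Hei:Roh:01} --- a porous subset of a space that is Ahlfors $2$-regular around it has Assouad dimension strictly below $2$ --- thereby improving the bound $\dim_{A}\partial Y\le 2$ already obtained in \Cref{lemm:assouad} to a strict one. Two ingredients must be extracted from the hypotheses: (i) $\partial Y$ is porous in $\overline Y$ below scale $r_{0}$; and (ii) $\overline Y$ satisfies two-sided Ahlfors $2$-regularity along $\partial Y$ at scales $\le r_{0}$, with a lower mass bound available on every ball disjoint from $\partial Y$.

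\textbf{Step 1 (porosity from relative ALLC).} I would first prove the porosity statement \Cref{eq:boundary:porosity}: there is $c_{0}\in(0,1)$ such that for every $y\in\partial Y$ and $0<\rho<r_{0}$ there is $z\in Y$ with $\overline{B}_{\overline{Y}}(z,c_{0}\rho)\subset B_{\overline{Y}}(y,\rho)$. The relative ALLC \ref{prop:planar:QC.1} is the natural sufficient condition for this: for $\rho$ small relative to $\diam\partial Y$ one picks, using that the quasicircle $\partial Y$ passes through $y$ and has bounded turning, two points $a,b\in\partial Y$ on the two arcs of $\partial Y\setminus\{y\}$ at distance $\asymp\rho/\lambda$ from $y$ with short boundary arcs $J_{a},J_{b}$ to $y$ of diameter $\lesssim\rho/\lambda$; then \ref{prop:planar:QC.1} connects $a$ to $b$ by a path confined to an annulus around $y$ inside $B_{\overline{Y}}(y,\rho)$ that avoids a ball $B_{\overline{Y}}(y,\rho')$ with $\rho'\asymp\rho/\lambda^{2}$, and, using that $\overline Y$ is a topological disk with boundary circle $\partial Y$, this path together with $J_{a}\cup J_{b}$ encloses a region of $B_{\overline{Y}}(y,\rho)$ that cannot be entirely within $c_{0}\rho$ of $\partial Y$ when $c_{0}$ is small --- otherwise the connecting path could not escape a narrow neighbourhood of $\partial Y$. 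Implementing this carefully --- via the separating-continuum and level-set machinery behind \Cref{thm:topology} --- is the first real obstacle. (Scales $\rho\ge r_{0}$ are harmless, since $\overline{B}_{\overline{Y}}(y,\rho)\cap\partial Y$ is then covered by finitely many balls of radius $r_{0}$.)

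\textbf{Step 2 (local $2$-regularity).} The area growth \eqref{eq:area:growth:intro} gives the upper bound $\mathcal{H}^{2}_{\overline{Y}}(\overline{B}_{\overline{Y}}(y,\rho))\le C_{U}\rho^{2}$ for $y\in\partial Y$, condition \ref{prop:planar:QC.2} gives $\mathcal{H}^{2}_{\overline{Y}}(\overline{B}_{\overline{Y}}(z,\rho))\ge C^{-1}\rho^{2}$ whenever $z\in Y$ and $0<\rho<d(z,\partial Y)\le r_{0}$, and since the quasicircle $\partial Y$ has bounded turning \cite{Tuk:Vai:80}, \Cref{lemm:Ahlfors:lowerbound} supplies $\mathcal{H}^{2}_{\overline{Y}}(\overline{B}_{\overline{Y}}(y,\rho))\ge\widetilde C^{-1}\rho^{2}$ for $y\in\partial Y$. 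The crucial consequence is that each porosity-hole $\overline{B}_{\overline{Y}}(z,c_{0}\rho)$ from Step 1 lies in $Y$, is disjoint from $\partial Y$, and carries $\mathcal{H}^{2}_{\overline{Y}}$-mass at least $C^{-1}(c_{0}\rho)^{2}$, and likewise for all of its sub-balls of radius $\tau<c_{0}\rho$.

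\textbf{Step 3 (dyadic iteration).} This is the heart and proceeds exactly as in \cite[Lemma 3.12]{Bo:Hei:Roh:01}, the only adaptation being that in our $\overline Y$ a lower mass bound is available only on balls avoiding $\partial Y$ --- but those are precisely the balls the argument tests. Fix $y\in\partial Y$ and $0<t<\rho<r_{0}$, and let $S\subset\partial Y\cap\overline{B}_{\overline{Y}}(y,\rho)$ be $t$-separated. Descending the dyadic scales from $\rho$ to $t$ and, at each scale and inside each relevant ball, carving out the porosity-hole of Step 1 (of controlled lower mass by \ref{prop:planar:QC.2}, since it misses $\partial Y$) against the upper bound \eqref{eq:area:growth:intro}, one finds that fewer dyadic descendants can meet $\partial Y$ than plain $2$-regularity would permit, and compounding this per-scale deficit over the $\asymp\log(\rho/t)$ scales yields $\#S\le C(\rho/t)^{Q}$ for a fixed $Q<2$ depending only on $c_{0},C_{U},C,\lambda$. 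By \Cref{def:doubling} this gives $\dim_{A}\partial Y\le Q<2$. The second obstacle is the bookkeeping: one must check that every sub-hole used stays at distance at least its own radius from $\partial Y$ and within $r_{0}$ of it, so that \ref{prop:planar:QC.2} applies --- which holds because porosity always produces holes disjoint from $\partial Y$ --- and that the per-scale deficit genuinely compounds into a power gain rather than a mere additive improvement. Finally, $\dim_{A}\partial Y<2$ forces $\partial Y$ to be planar by the Herron--Meyer theorem \cite{Her:Mey:12}, which completes \Cref{prop:planar:QC}.
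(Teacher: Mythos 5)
Your outline follows the paper's architecture — porosity of $\partial Y$ from relative ALLC and bounded turning (\Cref{eq:boundary:porosity}), two-sided local $2$-regularity along $\partial Y$ assembled from \eqref{eq:area:growth:intro}, \Cref{lemm:Ahlfors:lowerbound}, and condition \ref{prop:planar:QC.2}, and a compounding-over-scales argument — and your Steps 1 and 2 are essentially identical to what the paper does (the paper's porosity proof picks a point $v$ on the ALLC path at distance exactly $r/s$ from one boundary arc and shows it is also $\ge r/s$ from the other, rather than routing through separating continua, but the effect is the same). The genuine divergence is in the compounding step, which is exactly the ``second obstacle'' you flag. Instead of re-running the dyadic iteration of \cite[Lemma 3.12]{Bo:Hei:Roh:01} by hand — which, as you warn, requires care to turn a per-scale deficit into a multiplicative gain when only local two-sided regularity is available — the paper builds, once and for all, a disjoint family $\mathcal{B}$ of porosity holes at all scales $<r_{0}$ near $\partial Y$ (\Cref{lemm:5r-covering}), checks the doubling estimate $\mathcal{H}^{2}_{\overline{Y}}(5\xi B)\le d\,\mathcal{H}^{2}_{\overline{Y}}(B)$ for the subfamily $\mathcal{B}_{0}$ (\Cref{lemm:doublingcondition}), and shows the counting function $T=\sum_{B\in\mathcal{B}_{0}}\chi_{\xi B}$ is at least $-b\log\epsilon$ on the $\tfrac12\epsilon r$-neighbourhood of $\partial Y$ in $B_{\overline{Y}}(y,2r)$ (\Cref{lemm:accumulation:balls}). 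At that point \cite[Theorem 9.6 (b)]{Bo:Kos:Roh:98} supplies the exponential decay $\mathcal{H}^{2}_{\overline{Y}}(\{T\ge k\})\lesssim e^{-\mu k}$ in one stroke; comparing with the lower mass bound for the disjoint balls $B_{\overline{Y}}(z,\tfrac12\epsilon r)$, $z\in E$, centred on $\partial Y$ (from \Cref{lemm:Ahlfors:lowerbound}) gives $\#E\lesssim\epsilon^{-(2-\mu b)}$. The Bonk--Koskela--Rohde theorem is precisely the device that resolves your bookkeeping concern, and its hypotheses (disjointness and doubling for the ball family) are exactly what the local setup near $\partial Y$ delivers; using it is cleaner than iterating directly.
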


\begin{proof}[Proof of \Cref{prop:planar:QC} assuming \Cref{eq:boundary:Assouad:planar}.]
\Cref{eq:boundary:Assouad:planar} states that $\partial Y$ has Assouad dimension strictly less than $2$. Having verified this, \cite{Her:Mey:12} yields the existence of a bi-Lipschitz embedding $h \colon \partial Y \rightarrow \mathbb{R}^{2}$, i.e., $\partial Y$ is planar.
\end{proof}

So \Cref{prop:planar:QC} follows from \Cref{eq:boundary:Assouad:planar}. We split the proof of \Cref{eq:boundary:Assouad:planar} into a couple of sublemmas. 

\begin{lemm}\label{eq:boundary:porosity}
Suppose that $\partial Y$ has $\lambda$-bounded turning and satisfies \ref{prop:planar:QC.1} in \Cref{prop:planar:QC}. Then there exists a constant $C_{p} \geq 1$ depending only on $\lambda$ such that for every $x \in \partial Y$ and $0 < r < \min\left\{ r_{0}, \diam \partial Y \right\}$, there exists $y \in Y$ with $\overline{B}_{ \overline{Y} }( y, C_{p}^{-1} 2 r ) \subset B_{ \overline{Y} }( x, r ) \setminus \partial Y$.
\end{lemm}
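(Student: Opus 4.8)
The goal is a porosity statement: near any boundary point $x$, at any scale $r$ below $r_0$, the punctured ball $B_{\overline{Y}}(x,r) \setminus \partial Y$ contains a ball of comparable radius. The natural strategy is to first find a point of $Y$ that is genuinely far (in a relative sense) from $\partial Y$ inside $B_{\overline Y}(x,r)$, and then show that being far from $\partial Y$ forces a whole metric ball around it to avoid $\partial Y$. The relative ALLC condition \ref{prop:planar:QC.1} is the tool that produces such an interior point, and $\lambda$-bounded turning of $\partial Y$ controls how the ALLC-path can wrap around.

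First I would produce a point $a \in Y \cap B_{\overline Y}(x, r)$ with $d(a,\partial Y)$ comparable to $r$. To do this, pick a point $y_0 \in \partial Y$ on the ``far'' part of the sphere, i.e.\ with $d(x,y_0)$ of order $r$ but $y_0 \notin B_{\overline Y}(x, r/2)$ say; such a point exists for $r$ small relative to $\diam \partial Y$ because $\partial Y$ is a continuum through $x$. Apply \ref{prop:planar:QC.1} with center $x$, inner radius $\approx r/(4\lambda)$ and outer radius $\approx r/2$ to the pair consisting of $y_0$ and another boundary point on the inner sphere; this yields a path $|\alpha| \subset \overline B_{\overline Y}(x,\lambda \cdot \tfrac r2) \setminus B_{\overline Y}(x, \lambda^{-1}\cdot \tfrac{r}{4\lambda})$ joining them. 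Any point $a$ on this path lies in $B_{\overline Y}(x, r)$ (after choosing the numerical constants correctly, absorbing the $\lambda$) and is bounded away from $x$; the point is that this path, together with $\lambda$-bounded turning of $\partial Y$, forces some point $a$ of the path to satisfy $d(a, \partial Y) \geq c r / \lambda$ for an absolute constant $c$ — otherwise the path could be homotoped into a neighborhood of $\partial Y$ and one reaches a contradiction using that $\partial Y$ separates $\overline Y$ and has bounded turning. This is the step that requires care and is the main obstacle: converting ``relatively deep in the annulus'' into ``quantitatively far from $\partial Y$'' cleanly, using only \ref{prop:planar:QC.1} and the bounded turning constant, with all constants depending only on $\lambda$.

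Once such an $a$ is found with $d(a,\partial Y) \geq \rho$ for $\rho$ of order $r/C_p'$, it is immediate that $\overline B_{\overline Y}(a, \rho/2) \subset \overline Y \setminus \partial Y = Y$; and if in addition $d(x,a) \leq r - \rho/2$ — which we arrange by keeping $a$ in a slightly smaller ball, e.g.\ $B_{\overline Y}(x, r/2)$, and noting $\rho \leq r$ — then $\overline B_{\overline Y}(a, \rho/2) \subset B_{\overline Y}(x, r)$ by the triangle inequality. Setting $C_p$ so that $C_p^{-1} 2 r \leq \rho/2$, i.e.\ $C_p \geq 4 r/\rho$, which is a quantity depending only on $\lambda$ by construction, gives $\overline B_{\overline Y}(a, C_p^{-1} 2 r) \subset B_{\overline Y}(x,r) \setminus \partial Y$, and taking $y = a$ finishes the proof. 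The cases $r$ close to $\diam \partial Y$ versus $r$ small are handled uniformly because \ref{prop:planar:QC.1} is assumed for all $0 < 2r < R < r_0$; one only needs $r < \min\{r_0, \diam \partial Y\}$ to guarantee the existence of the far boundary point $y_0$ and to keep all the auxiliary radii below $r_0$.
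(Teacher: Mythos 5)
Your outline correctly identifies the two ingredients (relative ALLC to place a path inside the annulus, bounded turning to control $\partial Y$) and you correctly see that the only nontrivial step is upgrading ``in the annulus'' to ``uniformly far from $\partial Y$''. But that step is precisely where your argument is not a proof: the appeal to ``otherwise the path could be homotoped into a neighborhood of $\partial Y$ and one reaches a contradiction'' is a vague sketch, and you yourself flag it as the main obstacle. Homotopy plays no role here, and it is not clear how to make such an argument quantitative in $\lambda$. There is also a setup issue: you apply \ref{prop:planar:QC.1} to a far boundary point $y_0$ with $d(x,y_0)$ of order $r$ and a boundary point on the inner sphere, with outer radius $\approx r/2$; but \ref{prop:planar:QC.1} requires both endpoints of the pair to lie in the closed annulus $\overline{B}(x,R)\setminus B(x,r')$, and your $y_0$ need not lie in $\overline{B}(x,r/2)$.

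What the paper actually does to fill the gap is quite specific and worth comparing. Instead of a far point and a near point, it takes \emph{both} ALLC endpoints $a,b$ on the inner sphere $\{w : d(x,w)=r/(4\lambda)\}$, namely the two ends of the subarc $|\gamma|\subset\partial Y$ through $x$. Fixing a far boundary point $z$ with $d(x,z)\geq 2\lambda r$, it decomposes $\partial Y\setminus\{x,z\}$ into two arcs $|\gamma_a|\ni a$ and $|\gamma_b|\ni b$. Bounded turning gives $d(a,|\gamma_b|)\geq (8\lambda^2)^{-1}r$, while $d(b,|\gamma_b|)=0$; since $|\alpha|$ is connected and joins $a$ to $b$, an intermediate value argument produces a point $v\in|\alpha|$ with $d(v,|\gamma_b|)$ exactly equal to $r/s$ for a suitable $s=s(\lambda)$. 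Then bounded turning is used a second time, in the other direction: if some $w\in|\gamma_a|$ were within $r/s$ of $v$, then $w$ would be within $2r/s$ of $|\gamma_b|$, so the $\partial Y$-arc from $w$ into $|\gamma_b|$ of smallest diameter has diameter $<2\lambda r/s$; but that arc must pass through $x$ or $z$, forcing $v$ within $r/(8\lambda^2)$ of $\{x,z\}$, contradicting $v\in|\alpha|\subset\overline{B}(x,r/4)\setminus B(x,r/(8\lambda^2))$ and $d(x,z)>2\lambda r$. Hence $d(v,\partial Y)\geq r/s$, and the rest is your triangle-inequality finish. Without this intermediate-value selection of $v$ and the two-sided bounded-turning estimate, your proposal does not close the gap you identified.
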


\begin{proof}
This is only a small modification of the proof of Theorem 8.2 of \cite{Mer:Wil:13} but we include the details here for the convenience of the reader.  Let $s = 8 \lambda^{2}( 2 \lambda + 1 )$ and $C_{p} = 8 \lambda s$. Let $x \in \partial Y$ and $0 < r < \min\left\{ r_{0}, \diam \partial Y \right\}$. We claim that there exists a point $v \in Y$ such that
\begin{equation}
    \label{eq:boundary:porosity:proof}
    B_{ \overline{Y} }\left( v, C_{p}^{-1} 2 r \right)
    \subset
    B_{ \overline{Y} }( x, r )
    \setminus
    \partial Y. 
\end{equation}

Suppose for now that $r < ( 4 \lambda )^{-1} \min\left\{ r_{0}, \diam \partial Y \right\}$. Then there exists a point $z \in \partial Y$ for which $d( x, z ) \geq 2 \lambda r$. We fix such a $z$.

Let $|\gamma|$ denote the (closure of the) subarc of $\partial Y \setminus \left\{ w \in \partial Y \colon d( x, w ) = \frac{ r }{ 4 \lambda } \right\}$ that contains $x$. Let $a$ and $b$ denote the end points of $|\gamma|$ labelled in such a way that $\left\{ x, a, z, b \right\}$ is cyclically ordered on $\partial Y$. We have that $d( x, a ) = d( x,  b ) = \frac{ r }{ 4 \lambda }$ and $|\gamma| \subset \overline{B}_{ \overline{Y} }\left( x, \frac{ r }{ 4 \lambda } \right)$.

The relative ALLC condition of $\partial Y$ implies that there exists a path $\alpha$ joining $a$ to $b$ in $\overline{B}_{ \overline{Y} }\left( x, \frac{ r }{ 4 } \right) \setminus B_{ \overline{Y} }\left( x, \frac{ r }{ 8 \lambda^{2} } \right)$. We assume without loss of generality that $\alpha$ is an arc.

Let $|\gamma_{a}|$ denote the (closure of the) component of $\partial Y \setminus \left\{ x, z \right\}$ joining $x$ and $z$ that contains $a$ and let $|\gamma_{b}|$ be the other component. Observe that $d( a, |\gamma_{b}| ) \geq ( 8 \lambda^{2} )^{-1} r$ since otherwise we would find an arc $|\gamma'|$ joining $|\gamma_{b}|$ to $a$ within $\partial Y$ for which $( 8 \lambda )^{-1} r \geq \diam |\gamma'|$. This would imply the contradiction $( 8 \lambda )^{-1} r \geq d( a, \left\{ x, z \right\} )$.

The lower bound on $d( a, |\gamma_{b}| )$ and connectedness of $| \alpha |$ imply the existence of $v \in |\alpha|$ such that $d( v, |\gamma_{b}| ) = \frac{ r }{ s }$. Fix such a $v$. Suppose that there exists $w \in |\gamma_{a}|$ for which $d( v, w ) < \frac{ r }{ s }$. Then $d( w, |\gamma_{b}| ) < 2 \frac{ r }{ s }$ and there exists a path $\beta'$ joining $w$ to $| \gamma_{b} |$ within $\partial Y$ for which $\diam |\beta'| < 2 \lambda \frac{ r }{ s }$. Since $|\beta'|$ contains either $x$ or $z$, we have
\begin{equation*}
    d( v, \left\{ x, z \right\} )
    \leq
    d( w, \left\{ x, z \right\} )
    +
    d( v, w )
    <
    2 \lambda
    \frac{ r }{ s }
    +
    \frac{ r }{ s }
    =
    \frac{ r }{ 8 \lambda^{2} }.
\end{equation*}
This is a contradiction with the facts $d( x, z ) > 2\lambda r$ and $v \in |\alpha| \subset \overline{B}_{ \overline{Y} }\left( x, \frac{ r }{ 4 } \right) \setminus B_{ \overline{Y} }\left( x, \frac{ r }{ 8 \lambda^{2} } \right)$. Since no such $w$ exists, $d( v, |\gamma_{a}| ) \geq \frac{ r }{ s }$. Consequently, $B_{ \overline{Y} }\left( v, \frac{ r }{ s } \right) \subset B_{ \overline{Y} }( x, r ) \setminus \partial Y$.

If $( 4 \lambda )^{-1} \min\left\{ r_{0}, \diam \partial Y \right\} \leq r < \min\left\{ r_{0}, \diam \partial Y \right\}$, then there exists a point $v \in Y$ such that
\begin{equation*}
    B_{ \overline{Y} }\left( v, \frac{ r }{ 4 \lambda s } \right)
    \subset
    B_{ \overline{Y} }\left( x, \frac{ r }{ 4 \lambda } \right) \setminus \partial Y
    \subset
    B_{ \overline{Y} }( x, r )
    \setminus
    \partial Y. 
\end{equation*}
In either case, \eqref{eq:boundary:porosity:proof} holds.
\end{proof}

Let $r_{1} = \min\left\{ r_{0}, \diam \partial Y \right\}$, where $r_{0}$ is the parameter from the assumptions of \Cref{prop:planar:QC}. There exists a constant $C \geq 1$ with the following properties:
\begin{enumerate}[label=(\roman*)]
    \item\label{proof:eq:1} For every $y \in \partial Y$ and every $0 < r$, $\mathcal{H}^{2}_{ \overline{Y} }( B_{ \overline{Y} }( y, r ) ) \leq C r^{2}$. Moreover, for every $y \in \partial Y$ and every $0 < r < r_1$, $\mathcal{H}^{2}_{ \overline{Y} }( B_{ \overline{Y}}( y, r ) ) \geq C^{-1} r^{2}$.
    \item\label{proof:eq:2} For every $y \in \partial Y$ and $0 < r < r_1$, there exists $z \in Y$ such that $\overline{B}_{ \overline{Y} }( z, C^{-1} 2 r ) \subset B_{ \overline{Y} }( y, r ) \setminus \partial Y$.
    \item\label{proof:eq:3} For every $z \in Y$ with $d( z, \partial Y ) \leq r_1$ and every $0 < r < d( z, \partial Y )$, $C^{-1} r^{2} \leq \mathcal{H}^{2}_{ \overline{Y} }( B_{ \overline{Y} }( z, r ) )$.
\end{enumerate}
\begin{rem}
We have assumed that for all radii $0 < r < \diam \partial Y$ and every $y \in \partial Y$, $\mathcal{H}^{2}_{ \overline{Y} }( B_{ \overline{Y} }( y, r ) ) \leq C' r^{2}$ for some $C' > 0 $. Then the upper bound in \ref{proof:eq:1} holds for every $r > 0$ if we replace $C'$ with $C'' = \max\left\{ C', \mathcal{H}^{2}_{ \overline{Y} }( \overline{Y} ) / ( \diam \partial Y )^{2} \right\}$. The lower bound for such balls follows from \Cref{lemm:Ahlfors:lowerbound}.

The property \ref{proof:eq:2} follows from \Cref{eq:boundary:porosity} for some constant. Recall that, under the assumptions of \Cref{prop:planar:QC}, the lower bound in \ref{proof:eq:3} holds for each $0 < r < d( z, \partial Y ) \leq r_{0}$ for some constant $C'$. Hence \ref{proof:eq:3} follows.

Our claim is qualitative, so we use a uniform constant $C$ for these various conditions.
\end{rem}
In the following, if $B$ is a ball and $\xi > 0$, $\xi B$ refers to the ball with the same center and $\xi$ times the radius.

\begin{lemm}\label{lemm:5r-covering}
There exists a collection $\mathcal{B}$ of pairwise disjoint balls in $Y = \overline{Y} \setminus \partial Y$ such that for every $( y, r ) \in \partial Y  \times ( 0, r_1 )$ there exists a ball $B \in \mathcal{B}$ with
\begin{equation}
    \label{eq:comparable:constants}
    r_{1}/2
    >
    \max\left\{ d( \partial Y, B ), \diam B \right\}
    \quad\text{and}\quad
    \diam B \simeq r \simeq d( y, B ) \simeq d( \partial Y, B ),
\end{equation}
where $A_{1} \simeq A_{2}$ means that there exists a \emph{constant of comparability} $D > 0$ for which $D^{-1} A_{1} \leq A_{2} \leq D A_{1}$. Here the constants of comparability depend only on the constant $C$.
\end{lemm}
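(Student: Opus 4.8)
The plan is the standard ``dyadic ball packing'' argument, specialised to the collar of $\partial Y$: using the porosity \ref{proof:eq:2} I produce, near every boundary point and at every scale, a ball inside $Y$; at each fixed scale I thin these candidates to a pairwise disjoint family by a maximality argument; and I kill the cross-scale overlaps by keeping only a \emph{lacunary} set of scales. The observation that makes the last step work is that a ball produced by \ref{proof:eq:2} at scale $s = 2^{-k} r_{1}$ is trapped in the collar $\{ C^{-1} s \le d(\cdot, \partial Y) \le 2s \}$ of $\partial Y$, and two such collars are disjoint as soon as the two scales differ by more than $\log_{2} C$.

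Concretely, first I would fix $L = \lceil 1 + \log_{2} C \rceil$, set $\mathcal{K} = \{ Lm : m \ge 3 \}$, and for $k \in \mathcal{K}$ write $s_{k} = 2^{-k} r_{1}$ and $\rho_{k} = C^{-1} s_{k}$. For each $k \in \mathcal{K}$ and each $y \in \partial Y$, property \ref{proof:eq:2} applied with radius $s_{k} < r_{1}$ gives a point $z_{y,k} \in Y$ with $\overline{B}_{\overline{Y}}( z_{y,k}, 2\rho_{k} ) \subset B_{\overline{Y}}( y, s_{k} ) \setminus \partial Y$; I put $B_{y,k} = B_{\overline{Y}}( z_{y,k}, \rho_{k} )$, an open ball contained in $Y$ with $d( y, z_{y,k} ) < s_{k}$ and $d( z_{y,k}, \partial Y ) \ge 2\rho_{k}$. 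By \ref{proof:eq:3} each $B_{y,k}$ has $\mathcal{H}^{2}_{\overline{Y}}$-measure at least $C^{-1} \rho_{k}^{2}$, so since $\mathcal{H}^{2}_{\overline{Y}}( \overline{Y} ) < \infty$ any maximal (with respect to inclusion) pairwise disjoint subfamily $\mathcal{B}_{k} \subset \{ B_{y,k} : y \in \partial Y \}$, furnished by Zorn's lemma, is finite; I set $\mathcal{B} = \bigcup_{k \in \mathcal{K}} \mathcal{B}_{k}$. Crucially, all balls at scale $k$ have the common radius $\rho_{k}$, so by maximality every $y \in \partial Y$ admits some $B = B_{\overline{Y}}( z_{B}, \rho_{k} ) \in \mathcal{B}_{k}$ with $B_{y,k} \cap B \ne \emptyset$, whence $d( z_{y,k}, z_{B} ) < 2\rho_{k}$.

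Next I would record the geometry of a ball $B = B_{\overline{Y}}( z_{B}, \rho_{k} ) \in \mathcal{B}_{k}$, say with $z_{B} = z_{y',k}$. Then $\rho_{k} < 2\rho_{k} \le d( z_{B}, \partial Y ) \le d( z_{B}, y' ) < s_{k}$, so $\rho_{k} \le d( \partial Y, B )$ and $d( w, \partial Y ) < \rho_{k} + s_{k} \le 2 s_{k}$ for every $w \in B$; for the $y$ associated to $B$ above, $d( \partial Y, B ) \le d( y, B ) \le d( y, z_{y,k} ) + d( z_{y,k}, z_{B} ) < s_{k} + 2\rho_{k} \le 3 s_{k}$; and $\diam B \le 2\rho_{k}$, while $\diam B \ge \rho_{k}/2$ because $\overline{Y}$ is connected and carries points at distance $\ge 2\rho_{k}$ from $z_{B}$ (e.g. boundary points), so $d( z_{B}, \cdot )$ attains the value $\rho_{k}/2$. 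Since $k \ge 3L \ge 3$, these give $\max\{ d( \partial Y, B ), \diam B \} < r_{1}/2$, and $\diam B$, $d( \partial Y, B )$, $d( y, B )$ are all comparable to $s_{k}$ with constants depending only on $C$. Disjointness within one scale is built in; for $j < k$ in $\mathcal{K}$, $B \in \mathcal{B}_{j}$ and $B' \in \mathcal{B}_{k}$, every point of $B$ has distance $> \rho_{j}$ to $\partial Y$, every point of $B'$ has distance $< 2 s_{k}$ to $\partial Y$, and $2 s_{k} \le \rho_{j}$ since $k - j \ge L \ge 1 + \log_{2} C$; hence $B \cap B' = \emptyset$ and $\mathcal{B}$ is pairwise disjoint. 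Finally, given $( y, r ) \in \partial Y \times ( 0, r_{1} )$, take $k = 3L$ if $r \ge 2^{-3L} r_{1}$, and otherwise the unique $k = Lm \in \mathcal{K}$ with $2^{-Lm} r_{1} \le r < 2^{-L(m-1)} r_{1}$; in either case $r / s_{k} \in [1, 2^{3L})$, so the ball $B \in \mathcal{B}_{k}$ attached to $y$ satisfies all the required comparabilities with constants depending only on $C$. The hard part is exactly the cross-scale disjointness: it is the lacunarity of $\mathcal{K}$ — forced by the collar-trapping observation — that lets ``pairwise disjoint'' coexist with ``a comparable ball at every location and every scale'', at the cost of only a bounded loss in the constants.
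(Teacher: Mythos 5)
Your proof is correct and follows essentially the same strategy as the paper's: produce a candidate ball near each boundary point at each dyadic scale via the porosity property \ref{proof:eq:2}, thin each scale to a disjoint family (you use a maximal disjoint subfamily where the paper invokes the $5r$-covering theorem — these are essentially interchangeable here since all balls at a given scale have a common radius), and then keep only a lacunary set of scales to force cross-scale disjointness. Your computation of the lacunarity gap $L = \lceil 1 + \log_{2} C \rceil$ simply makes explicit the ``sufficiently large $N$'' that the paper leaves implicit.
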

\begin{proof}[Proof of \Cref{lemm:5r-covering}]
For each $x \in \partial Y$ and each integer $m$ such that $0 < 2^{m} < r_1$, consider a point $v_{x,m} \in Y = \overline{Y} \setminus \partial Y$ with $\overline{B}_{ \overline{Y} }( v_{x,m}, C^{-1} 2^{m+1} ) \subset B_{ \overline{Y} }( x, 2^{m} ) \setminus \partial Y$. Then the ball $B_{x,m} \coloneqq \overline{B}_{ \overline{Y} }( v_{x,m}, C^{-1} 2^{m} )$ satisfies
\begin{align}
    \label{eq:comparability}
    \diam B_{x,m}
    \simeq
    2^{m}
    \simeq
    d( x, B_{x,m} )
    \simeq
    d( \partial Y, B_{x, m} )
\end{align}
with the constants of comparison depending only on $C$.

For each $m \in \mathbb{Z}$ with $2^{m} < r_1$, let $\mathcal{B}_{m}$ denote the collection of the $B_{x,m}$ as $x \in \partial Y$ varies. The $5r$-covering theorem \cite[2.8.4]{Fed:69} states that there exists subcollection $\mathcal{B}_{m}' \subset \mathcal{B}_{m}$ whose elements are pairwise disjoint and
\begin{equation}
    \label{eq:5r-covering}
    \bigcup_{ B \in \mathcal{B}_{m} }
        B
    \subset
    \bigcup_{ B \in \mathcal{B}_{m}' }
        5 B.
\end{equation}
Let $1 \leq N \in \mathbb{N}$ and $m_{1} \in \mathbb{Z}$ with $2^{ m_{1} } < r_{1} \leq 2^{ m_{1} + 1 }$. Consider the collection $\mathcal{B} = \bigcup_{ k = 1 }^{ \infty } \mathcal{B}_{ m_{1} - k N }'$.

By choosing a sufficiently large $N$, the elements of $\mathcal{B}$ are pairwise disjoint and each element satisfies $\max\left\{ d( \partial Y, B ), \diam B \right\} < r_1/2$. The choice of $N$ depends only on the constants in \eqref{eq:comparability}. The inclusion \eqref{eq:5r-covering} implies that for each $0 < r < r_1$ and $x \in \partial Y$, there exists $B \in \mathcal{B}$ such that \eqref{eq:comparability} holds with $B_{x,m}$ replaced by $B$ and $2^{m}$ by $r$, with constants of comparison depending only on $C$. Then \eqref{eq:comparable:constants} follows.
\end{proof}

We fix arbitrary $0 < \epsilon < 1$, $0 < r < r_1$ and $y \in \partial Y$. Let $N = B_{ \overline{Y} }( y, 2 r ) \cap \left\{ x \in \overline{Y} \colon d( \partial Y, x ) < 2^{-1} \epsilon r \right\}$. Let $\mathcal{B}_{0} \subset \mathcal{B}$ denote the subcollection consisting of all $B \in \mathcal{B}$ with $B \subset B_{ \overline{Y} }( y, 4 r )$ with diameter at least $a \epsilon r$ for $a > 0$.

\begin{lemm}\label{lemm:accumulation:balls}
There exist $1 > a >  0$, $b > 0$ and $\xi > 1$, with the choices depending only on the constants of comparability in \eqref{eq:comparable:constants}, such that
\begin{equation}
    \label{eq:countingfunction}
    T( x )
    \coloneqq
    \sum_{ B \in \mathcal{B}_{0} }
    \chi_{ \xi B }(x)
    \geq
    -b \log( \epsilon )
    \quad \text{ for all } x \in N,
\end{equation}
where $\chi_{ \xi B }$ is the characteristic function of the ball $\xi B$.
\end{lemm}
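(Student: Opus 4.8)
The goal is to show that every point $x$ in the thin collar $N$ around $\partial Y$ near $y$ is covered by $\xi B$ for at least $\gtrsim -\log\epsilon$ many balls $B$ from the dyadic collection $\mathcal B_0$ of porosity balls (with diameters $\gtrsim \epsilon r$) living inside $B_{\overline Y}(y,4r)$. The idea is to produce, for each dyadic scale $\rho$ between $\epsilon r$ and (a fixed multiple of) $r$, at least one ball $B\in\mathcal B$ of diameter $\simeq\rho$ that is close enough to $x$ that $\xi B\ni x$ for a suitable dilation factor $\xi$. Since there are $\simeq -\log_2\epsilon$ such scales, and balls at genuinely different dyadic scales are distinct elements of $\mathcal B$, summing the indicators $\chi_{\xi B}(x)$ over these witnesses gives the asserted lower bound on $T(x)$.

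First I would fix $x\in N$, so that $z := $ a nearest point of $\partial Y$ to $x$ satisfies $d(x,z)<2^{-1}\epsilon r$ and $z\in B_{\overline Y}(y,2r)$ (using $d(x,y)<2r$ and $d(x,z)<\epsilon r/2<r$, so $d(z,y)<3r$; one fixes constants so everything sits inside $B_{\overline Y}(y,4r)$). For each integer $k\ge 0$ with $2^{-k}\cdot r \ge \epsilon r$, i.e. $0\le k\le \lfloor\log_2(1/\epsilon)\rfloor$, apply \Cref{lemm:5r-covering} at the boundary point $z$ and radius $\rho_k\simeq 2^{-k}r$ to obtain a ball $B_k\in\mathcal B$ with $\diam B_k \simeq \rho_k \simeq d(z,B_k)\simeq d(\partial Y,B_k)$, all with the fixed constant of comparability $D$ from \eqref{eq:comparable:constants}. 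Choosing $a=1/(2D)$ (say) ensures $\diam B_k\ge a\epsilon r$ for all admissible $k$, and choosing the ambient radius appropriately (using $d(z,B_k)\le D\rho_k \le D r$ and $d(z,y)<3r$) guarantees $B_k\subset B_{\overline Y}(y,4r)$, so $B_k\in\mathcal B_0$. Then by the triangle inequality $d(x,B_k)\le d(x,z)+d(z,B_k) \le \epsilon r/2 + D\rho_k \lesssim \rho_k \simeq \diam B_k$, so for a dilation factor $\xi$ depending only on $D$ we get $x\in\xi B_k$, hence $\chi_{\xi B_k}(x)=1$.

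The remaining point is to see that these witnesses $B_0,B_1,\dots$ contribute genuinely many distinct terms to the sum $T(x)=\sum_{B\in\mathcal B_0}\chi_{\xi B}(x)$. By construction of $\mathcal B$ in \Cref{lemm:5r-covering}, all its balls at a fixed dyadic "level" are pairwise disjoint, and the levels used in $\mathcal B$ are spaced $N$ apart; balls at different levels have diameters differing by at least a factor $2^N$, hence are distinct. A single dyadic scale $\rho_k\simeq 2^{-k}r$ meets at most $O(1)$ of the retained levels $m_1 - jN$, and conversely each retained level is hit by $O(1)$ values of $k$ in our range. Therefore the number of distinct balls among $\{B_k\}_{k=0}^{\lfloor\log_2(1/\epsilon)\rfloor}$ is at least $c\,(\lfloor\log_2(1/\epsilon)\rfloor+1)\ge -b\log\epsilon$ for a constant $b>0$ depending only on $D$ (and $N$, which itself depends only on $D$). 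Summing $\chi_{\xi B}(x)=1$ over these distinct $B\in\mathcal B_0$ yields \eqref{eq:countingfunction}.

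The main obstacle is the bookkeeping in the last step: one must make sure that the balls extracted from \Cref{lemm:5r-covering} at the various scales $\rho_k$ really do land in $\mathcal B$ (not merely in some $\mathcal B_m$), that the dilation constant $\xi$ and the lower diameter cutoff $a$ can be fixed uniformly in $\epsilon$, $r$, $y$, and that passing from "one witness per dyadic scale" to "$\gtrsim -\log\epsilon$ distinct balls" correctly accounts for the level-spacing $N$ so that no cancellation of the logarithmic count occurs. All constants involved are absorbed into the single comparability constant $C$ (equivalently $D$), so the conclusion is quantitative in the sense required for the eventual Assouad-dimension estimate.
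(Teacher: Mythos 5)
Your proposal is correct and follows essentially the same approach as the paper: take the nearest boundary point $w_z$ to $x$, apply \Cref{lemm:5r-covering} at geometric scales between roughly $\epsilon r$ and $r$ to produce balls $B\in\mathcal{B}_0$ with $x\in\xi B$, and then count distinct balls to get $\gtrsim -\log\epsilon$ of them. The only stylistic difference is in the distinctness bookkeeping: the paper cleanly separates scales by choosing radii $s_i = A^{2(i-1)}\widetilde{s}_i$ so that the diameter intervals $[A^{-1}s_i, A s_i)$ are pairwise disjoint, whereas you argue that each ball of $\mathcal{B}$ can be returned by the lemma for at most $O(1)$ consecutive dyadic scales (an overlap factor controlled by the comparability constant); both yield the same conclusion with constants depending only on the comparability constants.
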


\begin{proof}[Proof of \Cref{lemm:accumulation:balls}]
For each $( w, s ) \in B( y, 3 r ) \cap \partial Y \times ( 0, r_1 )$, let $B_{s}( w ) \in \mathcal{B}$ be the ball obtained from \Cref{lemm:5r-covering}. \Cref{lemm:5r-covering} implies the existence of $A > 1$ for which $A^{-1} s \leq \diam B_{s}(w) < A s$ and for the center $c_{s}(w)$ of $B_{s}(w)$, $A^{-1} s \leq d( w, c_{s}(w) ) < A s$.

Let $0 < a < A^{-2}$. Then for every $A a \epsilon r \leq s < r/A$, we have $B_{s}( w ) \in \mathcal{B}_{0}$. Moreover, if $\xi \geq 3 a^{-1}$, the radius of $\xi B_{s}(w)$ is bounded from below by $3 a^{-1} \diam B_{s}(w) / 2$. Given $z \in B( w, 2^{-1} \epsilon r $), $$d( c_{s}(w), z ) < s/( 2 A a ) + d( c_{s}(w), w ) \leq ( a^{-1} + 2A^{2} ) \frac{ \diam B_{s}( w ) }{ 2 }.$$ Thus $\xi B_{s}(w) \supset B( w, 2^{-1} \epsilon r )$.

Let $k \in \mathbb{Z}$ be the largest integer for which $A a \epsilon r < r/A^{2k+1}$. Let $\left\{ \widetilde{s}_{i} \right\}_{ i = 1 }^{ k}$ be a strictly increasing sequence in the interval $\left( A a \epsilon r, r / A^{2k+1} \right)$. Denote $s_{i} = A^{ 2(i-1) } \widetilde{s}_{i}$ for each $i = 1,2,\dots k$. Here $s_{k} < r/A$ and $A s_{i-1} < A^{-1} s_{i}$ for each $i$. Hence the collection $\left\{ B_{s_{i}}( w ) \right\}_{ i = 1 }^{ k}$ contains $k$ different balls. This implies
\begin{equation}
    \label{eq:countingfunction:w}
    T(z)
    \geq
    k
    \quad
    \text{for every $z \in B( w, 2^{-1} \epsilon r )$}.
\end{equation}
We set now $a = A^{-4}$, $\xi = 3 a^{-1}$, and $b = 1/( 2 \log( A ) )$. The maximality of $k$ implies $k \geq - b \log( \epsilon )$.

If $z \in N$, there exists $w_{z} \in \partial Y$ such that $d( w_{z}, z ) = d( \partial Y, z ) < 2^{-1} \epsilon r$. In particular, $w_{z} \in B( y, 3r ) \cap \partial Y$ and $z \in B( w_{z}, 2^{-1}\epsilon r )$. Then \eqref{eq:countingfunction:w} implies \eqref{eq:countingfunction} for the constants $a$, $\xi$, and $b$.
\end{proof}

\begin{lemm}\label{lemm:doublingcondition}
Let $a, b, \xi$, and $\mathcal{B}_{0}$ be as in \Cref{lemm:accumulation:balls}. There exists a constant $d \geq 1$, depending only on the constants of comparability in \eqref{eq:comparable:constants} and $C$, such that $\mathcal{H}^{2}_{ \overline{Y} }( 5 \xi B ) \leq d \mathcal{H}^{2}_{ \overline{Y} }( B )$ for every $B \in \mathcal{B}_{0}$.
\end{lemm}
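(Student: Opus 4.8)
The plan is to deduce the doubling inequality directly from the two‑sided area estimates already recorded: the mass upper bound for balls centred on $\partial Y$ in \ref{proof:eq:1}, and the Ahlfors lower bound for balls lying well inside $Y$ in \ref{proof:eq:3}. The key point is that every $B \in \mathcal{B}_{0}$, being a member of $\mathcal{B}$, has radius comparable to its distance from $\partial Y$, so enlarging it by the fixed factor $5\xi$ costs only a bounded factor in area.

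First I would recall the shape of a ball $B \in \mathcal{B}_{0}$. By the construction in \Cref{lemm:5r-covering}, $B = \overline{B}_{\overline{Y}}(v,\rho)$ for some $v \in Y$ and $\rho > 0$ with $\overline{B}_{\overline{Y}}(v, 2\rho) \subset \overline{Y} \setminus \partial Y$; in particular $d(v,\partial Y) \geq 2\rho$. Since $d(v,\partial Y) \leq d(\partial Y, B) + \rho$ and $d(\partial Y, B) \simeq \rho$ by \eqref{eq:comparability}, we get $\rho \leq d(v,\partial Y) \leq A_{0}\rho$ for a constant $A_{0}$ depending only on the comparability constants. Moreover, as $\overline{Y}$ is path connected and $d(v,\partial Y) > \rho$, the continuous function $t \mapsto d(v,\gamma(t))$ along a path $\gamma$ from $v$ to $\partial Y$ takes every value in $(0,\rho)$ inside $B_{\overline{Y}}(v,\rho)$, so $\diam B \geq \rho$; combined with $\max\{d(\partial Y, B), \diam B\} < r_{1}/2$ from \eqref{eq:comparable:constants} this gives $d(v,\partial Y) \leq d(\partial Y, B) + \rho < r_{1} \leq r_{0}$, which is exactly the range in which \ref{proof:eq:3} is available at $v$.

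Next I would prove the two estimates. For the upper bound, choose $w \in \partial Y$ with $d(v,w) \leq 2 d(v,\partial Y) \leq 2A_{0}\rho$; since $5\xi B = \overline{B}_{\overline{Y}}(v, 5\xi\rho)$, the triangle inequality gives $5\xi B \subset B_{\overline{Y}}(w, C_{1}\rho)$ with $C_{1} = 2A_{0} + 5\xi + 1$ (recall $\xi > 1$ is the constant of \Cref{lemm:accumulation:balls}, which depends only on the comparability constants). The upper mass bound in \ref{proof:eq:1}, valid at all radii, then yields $\mathcal{H}^{2}_{\overline{Y}}(5\xi B) \leq C C_{1}^{2} \rho^{2}$. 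For the lower bound, apply \ref{proof:eq:3} with $z = v$ and $r = \rho$ — legitimate because $v \in Y$, $d(v,\partial Y) \leq r_{1}$ and $0 < \rho < d(v,\partial Y)$ — to obtain $\mathcal{H}^{2}_{\overline{Y}}(B) \geq \mathcal{H}^{2}_{\overline{Y}}(B_{\overline{Y}}(v,\rho)) \geq C^{-1}\rho^{2}$. Combining the two gives $\mathcal{H}^{2}_{\overline{Y}}(5\xi B) \leq C^{2} C_{1}^{2}\, \mathcal{H}^{2}_{\overline{Y}}(B)$, so the lemma holds with $d = C^{2} C_{1}^{2}$, depending only on $C$ and the comparability constants in \eqref{eq:comparable:constants}.

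I do not expect a serious obstacle; the argument is essentially bookkeeping with constants. The only point needing a little care is confirming $d(v,\partial Y) \leq r_{1}$, so that the Ahlfors lower bound \ref{proof:eq:3} genuinely applies at the centre $v$ — this is why I first extract $\diam B \geq \rho$ from path connectedness before invoking $\max\{d(\partial Y, B), \diam B\} < r_{1}/2$. One should also keep track that the comparability constant $A_{0}$ and the enlargement factor $\xi$ depend only on the data allowed in the statement, so that $d$ has the claimed dependence.
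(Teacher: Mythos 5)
Your proof is correct and takes essentially the same route as the paper's: enclose $5\xi B$ in a boundary-centered ball to invoke the upper mass bound \ref{proof:eq:1}, apply the Ahlfors lower bound \ref{proof:eq:3} to $B$ itself, and combine via the comparable length scale. Your version is a bit more explicit than the paper's — in particular you spell out that $\diam B \geq \rho$ (via path connectedness) in order to verify $d(v,\partial Y) < r_1$, which the paper leaves implicit in its invocation of \eqref{eq:comparable:constants} — but the underlying argument is identical.
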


\begin{proof}[Proof of \Cref{lemm:doublingcondition}]
Consider $B \in \mathcal{B}_{0}$. Then $5 \xi B \subset B_{ \overline{Y} }( y, \rho )$ for some $y \in \partial Y$ such that $5 \xi \diam B \simeq 5 \xi d( y, B ) \simeq 5 \xi d( \partial Y, B ) \simeq \rho$, depending only on the constants of comparability in \eqref{eq:comparable:constants}. The mass upper bound \ref{proof:eq:1} yields $\mathcal{H}^{2}_{ \overline{Y} }( 5 \xi B ) \leq C \rho^{2} \simeq C 25 \xi^{2} (\diam B)^{2}$. Given \eqref{eq:comparable:constants}, we have $\max\left\{ \diam B, d( B, \partial Y ) \right\} < r_{1}/2$. Hence the lower bound from \ref{proof:eq:3} implies $( \diam B )^{2} \leq C \mathcal{H}^{2}_{ \overline{Y} }( B )$. The existence of $d$ follows.
\end{proof}

\begin{proof}[Proof of \Cref{eq:boundary:Assouad:planar}]
The claim is that there exist $0 < \delta < 2$ and $\widetilde{C} > 0$ such that for every $0 < \epsilon < 1$, every $0 < r < \diam \partial Y$, and every $y \in \partial Y$, any $\epsilon r$-separated set $E \subset B( y, r ) \cap \partial Y$ satisfies ${\#}E \leq \widetilde{C} \epsilon^{ -\delta }$.

Suppose that the claim holds whenever $0 < r < r_1$. Consider $r_1 \leq r < \diam \partial Y$. Let $E_{0} \subset \partial Y$ be a maximal $r_1 / 2$-separated net. For every $f \in E_{0}$, the set $E_{f} = B( f, r_1 / 2 ) \cap E$ is $\epsilon r_1 / 2$-separated. Since $E = \bigcup_{ f \in E_{0} } E_{f}$, we have ${\#} E \leq \sum_{ f \in E_{0} } {\#} E_{f} \leq {\#}E_{0} \widetilde{C} \epsilon^{-\delta}$. So the general case follows from the special one.

Now we prove the claim for each $0 < r < r_1$, $0 < \epsilon < 1$, and $y \in \partial Y$. We choose $a$, $b$ and $\xi$ as in \Cref{lemm:accumulation:balls} and let $\mathcal{B}_{0} \subset \mathcal{B}$ be the collection defined before the statement \Cref{lemm:accumulation:balls}. The collection $\mathcal{B}_{0}$ has the following properties:
\begin{enumerate}
    \item[$A_1$.] $\sup_{ B \in \mathcal{B}_{0} } \diam B < \infty$;
    \item[$A_{2}$.] $0 < \mathcal{H}^{2}_{ \overline{Y} }( 5 \xi B ) \leq d \mathcal{H}^{2}_{ \overline{Y} }( B )$ for each $B \in \mathcal{B}_{0}$;
    \item[$A_{3}$.] the balls in $\mathcal{B}_{0}$ are pairwise disjoint;
    \item[$A_{4}$.] the measure of $S \coloneqq \bigcup_{ B \in \mathcal{B}_{0} } B \subset B_{ \overline{Y} }( y, 4 r )$ is finite.
\end{enumerate}
\Cref{lemm:doublingcondition} implies that the constant $d$ in $A_{2}$ can be chosen to be independent of $y$, $r$, and $\epsilon$.

Having verified properties $A_{1}$-$A_{4}$, \cite[Theorem 9.6 (b)]{Bo:Kos:Roh:98} explicitly states for $\mu = 1/12d^{2}$ the following:
\begin{equation*}
    \mathcal{H}^{2}_{ \overline{Y} }\left(
        \left\{ x \in \overline{B}_{ \overline{Y} }( y, 2 r ) \colon T( x ) \geq -b \log( \epsilon ) \right\}
    \right)
    \leq
    ( 1 + d ) \mathcal{H}^{2}_{ \overline{Y} }( S ) e^{ -\mu ( - b \log( \epsilon ) ) }.
\end{equation*}
Given that $S \subset B_{ \overline{Y} }( y, 4 r )$, the upper bound in \ref{proof:eq:1} implies
\begin{equation}
    \label{eq:upperbound:mass}
    \mathcal{H}^{2}_{ \overline{Y} }\left(
        \left\{ x \in \overline{B}_{ \overline{Y} }( y, 2 r ) \colon T( x ) \geq -b \log( \epsilon ) \right\}
    \right)
    \leq
    ( 1 + d )( C (4r)^{2} ) \epsilon^{ \mu b }.
\end{equation}
Let $E \subset B_{ \overline{Y} }( y, r ) \cap \partial Y$ be an $\epsilon r$-separated set. We see from \eqref{eq:countingfunction} and the lower bound in \ref{proof:eq:1} that
\begin{align}
    \label{eq:lowerbound:mass}
    &\mathcal{H}^{2}_{ \overline{Y} }\left(
        \left\{ x \in \overline{B}_{ \overline{Y} }( y, 2 r ) \colon T( x ) \geq -b \log( \epsilon ) \right\}
    \right)
    \\ \notag
    \geq \,
    &\mathcal{H}^{2}_{ \overline{Y} }\left( N \right)
    \geq
    \mathcal{H}^{2}_{ \overline{Y} }\left( \bigcup_{ z \in E } B( z, 2^{-1}\epsilon r ) \right)
    \geq
    ( {\# E} ) C^{-1} (2^{-1} \epsilon r )^{2}.
\end{align}
Now \eqref{eq:upperbound:mass} and \eqref{eq:lowerbound:mass} yield ${\# E} \leq \widetilde{C} \epsilon^{ -( 2 - \mu b ) }$, where $\widetilde{C}$ is independent of $y$, $r$, and $\epsilon$. We denote $\delta \coloneqq 2 - \mu b < 2$. The claim follows.
\end{proof}

\section{Concluding remarks}\label{sec:conc}
Consider any quasiconformal Jordan domain $Y$ and $\phi$ as in \Cref{thm:carat:metric}. Since $\phi$ is a homeomorphism, the Jacobian $J_{\phi}$ of $\phi$ satisfies
\begin{equation}
    \label{eq:boundedarea}
    \mathrm{Area}( \phi )
    \coloneqq
    \int_{ \mathbb{D} } J_{\phi} \,d\mathcal{L}^{2}
    \leq
    \mathcal{H}^{2}_{ \overline{Y} }( Y )
    <
    \infty.
\end{equation}
The number $\mathrm{Area}( \phi )$ is called the \emph{parametrized area} of $\phi$.

Lytchak and Wenger consider in \cite[Section 1.2]{Lyt:Wen:17} the class $\Lambda( \partial Y, \overline{Y} )$ of those $u \in N^{1,2}( \mathbb{D}; \overline{Y} )$ whose trace $u' \colon \mathbb{S}^{1} \rightarrow \overline{Y}$ is a (weakly) monotone parametrization of $\partial Y$. Associated to such maps, one defines $\mathrm{Area}( u )$ by integrating a Jacobian of $u$ \cite[Section 1.2]{Lyt:Wen:17}. Also, $E( u ) = \int_{ \mathbb{D} } \rho_{u}^{2} \,d\mathcal{L}^{2}$ is the corresponding energy.

\Cref{thm:carat:metric} implies that every quasiconformal homeomorphism $\phi \colon \mathbb{D} \rightarrow Y$ defines an element of $\Lambda( \partial Y, \overline{Y} )$. If $\phi$ is $K$-quasiconformal, then $E( \phi ) \leq K \mathrm{Area}( \phi ) < \infty$ due to \eqref{eq:boundedarea}. Then \cite[Theorem 7.6]{Lyt:Wen:17} yields the existence of $u_{e} \in \Lambda( \partial Y, \overline{Y} )$ with minimal $E(u)$ among all $u \in \Lambda( \partial Y, \overline{Y} )$, referred to as an energy minimizer. Similarly, \cite[Theorem 1.1]{Lyt:Wen:17} yields the existence of $u_{a} \in \Lambda( \partial Y, \overline{Y} )$ of minimal parametrized area.

For a general quasiconformal Jordan domain $Y$, it is not clear whether or not $u_{e}$ (or $u_{a}$) is a quasiconformal homeomorphism. However, if we also assume that $\overline{Y}$ is geodesic, $\partial Y$ is rectifiable, and $\overline{Y}$ satisfies a quadratic isoperimetric inequality and \eqref{eq:pointshavezeromod} at the boundary points of $Y$, the energy minimizer $u_e$ is a quasiconformal homeomorphism \cite[Theorem 1.3]{Cre:Rom:20}. We refer the interested reader to \cite{Lyt:Wen:17} and \cite{Cre:Rom:20} for further reading.

There are some ways to construct quasiconformal Jordan domains. For example, if $X$ is a metric surface satisfying local versions of annular linear local connectivity and Ahlfors $2$-regularity, Theorem 4.17 of \cite{Wil:10} yields the existence of many quasiconformal Jordan domains $Y \subset X$ satisfying the assumptions of \Cref{prop:planar:QC}. Some examples of quasiconformal Jordan domains can also be obtained from \cite[Theorem 2]{Hai:09}.

\subsection*{Acknowledgement}The author was supported by the Academy of Finland, project number 308659 and by the Vilho, Yrjö and Kalle Väisälä Foundation. The author wishes to thank the referee for their helpful suggestions.

\bibliographystyle{alpha}
\bibliography{bibliography} 

\end{document}